\newtheorem{theorem}{Theorem}[section]
\newtheorem{proposition}{Proposition}[section]
\newtheorem{lem}{Lemma}[section]
\newtheorem{corollary}{Corollary}[section]
\newtheorem{example}{Example}[section]
\newtheorem{rmk}{Remark}[section]
\newtheorem{definition}[theorem]{Definition}
\newcommand{\C}{\mathbb{C}}
\newcommand{\norm}[1]{\Vert#1\Vert }
\title[Hausdorff operators on Fock spaces]{Boundedness and compactness of Hausdorff operators on Fock spaces}
\author{Óscar Blasco, Antonio Galbis}
\subjclass[2020]{{30H20, 47B38, 47B10}}
\keywords{Hausdorff operator; Fock space; summing operator}
\thanks{The research of  \'O. Blasco was funded by the Spanish Ministry for Science and Innovation under Grant PID2022-138342NB-I00 for \emph{Functional analysis methods in approximation theory and applications.} The research of A. Galbis was partially supported by the projects MCIN PID2020-119457GBI00/AEI/10.13039/501100011033 and GV Prometeu/2021/070.}
\begin{document}

	\maketitle
\begin{abstract}
	We obtain a complete characterization of the bounded Hausdorff operators acting on a Fock space $F^p_\alpha$ and taking its values into a larger one $F^q_\alpha,\ 0 < p \leq q \leq \infty,$ as well as some necessary or sufficient conditions for a Hausdorff operator to transform a Fock space into a smaller one. Some results are written in the context of mixed norm Fock spaces. Also the compactness of Hausdorff operators on a Fock space is characterized. The compactness result for Hausdorff operators on the Fock space $F^\infty_\alpha$ is extended to more general Banach spaces of entire functions with weighted sup norms defined in terms of a radial weight and conditions for the Hausdorff operators to become  $p$-summing  are also included.
\end{abstract}

\section{Introduction}

The modern study of Hausdorff operators began with the work of Siskakis \cite{Siskakis} in complex analysis and the work of Liflyand-Móricz \cite{Liflyand-Moricz} in the Fourier transform setting. See for instance \cite{Chen,Kara, Liflyand} and the references therein. These operators have recently attracted the attention of many authors who have studied them in various spaces of analytic functions. For example, we refer to the articles \cite{Bonet, G-S, Hung, Mirotin, S}.  Following \cite{G-S}, for each  positive measure $\mu$ defined on $(0, \infty)$ we formally consider the {\it Hausdorff operator induced by the measure} $\mu$ defined by
$$
\mathscr{H}_\mu(f)(z):=\int_0^\infty f\left(\frac{z}{t}\right)\ \frac{d\mu(t)}{t},\ z\in {\mathbb C},$$ where $f\in H({\mathbb C})$ is an entire function and where we assume that $\int_{0}^\infty\frac{d\mu(t)}{t} < \infty$ for the operator to be defined on constant functions.
	
Our main goal is to study boundedness and compactness conditions for Hausdorff operators acting between different Fock spaces, although we also consider these operators on weighted Banach spaces of entire functions of type $H^\infty$ and analyze conditions for the operators to belong to certain classes of summing operators. Our results complement recent work by Galanopoulos and Stylogiannis \cite{G-S}, and by Bonet \cite{Bonet}.  In \cite[Theorem 1.1]{G-S} a necessary condition is obtained so that the Hausdorff operator acts continuously in a Fock space $F^p_\alpha,$ $1 \leq p \leq \infty,$ (see Section \ref{sec:pre} for the definition of Fock spaces and related classes). It is proved that if $\mu(0,1) = 0$ then $\mathscr{H}_\mu:F^p_\alpha\to F^p_\alpha$ is a bounded operator for $1\le p\le \infty$. As a consequence of our Theorem \ref{mainteo} it follows that condition $\mu(0,1) = 0$ holds if and only if $\mathscr{H}_\mu:F^p_\alpha\to F^p_\alpha$ is a bounded operator for every $0 < p < \infty$ if and only if $\mathscr{H}_\mu:F^p_\alpha\to F^q_\alpha$ is a bounded operator for some $0 < p \leq q\leq \infty.$ Our result also includes the case of Fock spaces $F^p_\alpha$ with $0 < p < 1,$ which are not Banach but quasi-Banach spaces and, since $F^p_\alpha\subset F^q_\alpha$ whenever $p<q$, gives that if $\mathscr{H}_\mu(F^p_\alpha)\subset F^q_\alpha$ for some $p<q$ then $\mathscr{H}_\mu(F^p_\alpha)\subset F^p_\alpha$ for all $0<p<\infty$.

On the other hand, in \cite{G-S} the authors also concentrate on $F^2_\alpha$ and they characterize the Hausdorff operators that belong to a Schatten class $S_p.$ Namely in \cite[Theorem 1.3]{G-S} it is shown that $\mathscr{H}_\mu$ belongs to the Schatten class $S_p$ for $1\le p<\infty$ if and only if $\sum_{n=0}^\infty \mu_n^p<\infty,$ where $\mu_n:= \int_0^\infty\frac{d\mu(t)}{t^{n+1}}.$
 Let us point out, denoting by $\Pi_{q,p}(X,Y)$, $\Pi_p(X,Y)$ and $N_p(X,Y)$ the classes of $(q,p)$-summming, $p$-summing and $p$-nuclear operators from $X$ into $Y$ respectively,  that for $X=Y=H$ a Hilbert space  it is known that  $$\Pi_p(H,H)=S_2, \quad 1\le p<\infty \quad \hbox{ (see \cite[Corollary 3.16]{Diestel})},$$
 $$N_1(H,H)= S_1, \quad N_p(H,H)=S_2, \quad 1< p<\infty\quad \hbox{ (see \cite[Theorem 5.30]{Diestel})}$$ and $$S_q=\Pi_{(q,2)}(H,H), \quad q\ge 2 \quad \hbox{(see \cite[Theorem 10.3]{Diestel} )}.$$ Hence we can rephrase the results in \cite{G-S} as $(\mu_n)\in \ell^1$ if and only if $\mathscr{H}_\mu\in N_1(F^2_\alpha,F^2_\alpha)$, $(\mu_n)\in \ell^2$ if and only
   $\mathscr{H}_\mu\in \Pi_{1}(F^2_\alpha,F^2_\alpha)$ or  $\mathscr{H}_\mu\in N_p(F^2_\alpha,F^2_\alpha)$ for some $1< p<\infty$  and  $(\mu_n)\in \ell^q$ for $q\ge 2$ if and only if $\mathscr{H}_\mu\in \Pi_{(q,2)}(F^2_\alpha,F^2_\alpha)$. We shall study then $(q,p)$-summing, $p$-summing and $p$-nuclear  Haussdorff operators acting on Fock spaces in the last section.

It should also be said that we not only consider Hausdorff operators on Fock spaces $F^p_\alpha$ but also on mixed normed Fock spaces (see Section \ref{sec:mixed}). To our knowledge, this is the first time these spaces are considered.

Besides the Introduction, the paper is divided into eight sections. Sections 2 and 3 are of preliminary character while Section 4 deals with boundedness of Hausdorff operators acting on Fock and mixed normed Fock spaces.
In Section \ref{sec:large-small} we obtain some necessary or sufficient conditions for a Hausdorff operator to transform a Fock space into a smaller one. We mention, for example, Theorem \ref{teopq}, which is based on an improvement of results about the Taylor coefficients of functions in the Fock space obtained in \cite{BG,T}, or Theorem \ref{th:fromF1toFinf1} which gives a complete characterization of those Hausdorff operators mapping $F^1_\alpha$ into the smaller mixed Fock space $F^{\infty,1,\alpha}.$

Sections 6 and 7 deal with compactness. Our second main result is Theorem \ref{th:compact} which states that $\mathscr{H}_\mu:F^p_\alpha\to F^q_\alpha$ is compact for some $ 0 < p \leq q\leq \infty$ if and only if $\mathscr{H}_\mu:F^p_\alpha\to F^p_\alpha$ is compact for every $0 < p \leq \infty$ and this happens precisely when $\mu\left((0,1]\right) = 0.$ This should be compared with \cite[Theorem 1.2]{G-S}, where only the case $1 < p=q < \infty$ was considered. We should mention that in \cite{Bonet} boundedness and compactness results are obtained for Hausdorff operators acting on weighted Banach spaces of type $H^\infty$ (see section \ref{sec:weighted} for the precise definition). In particular, a necessary condition is obtained in \cite{Bonet} for compactness as well as some sufficient conditions in concrete cases. We improve those results and in Theorem \ref{th:weighted} we extend the characterization of compactness for Hausdorff operators acting on the Fock space $F^\infty_\alpha$ to more general Banach spaces of entire functions with weighted sup norms. This is done by combining the techniques of the proof of theorem \ref{mainteo} with some ideas contained in \cite{Lusky}.
Section 8 contains concrete examples, in which some of the obtained results are applied to variants of the Hardy operator or certain fractional operators. Finally in the last section we study $p$-summing, $p$-nuclear  and $(q,p)$-summing operators on Fock spaces that are not Hilbert spaces and find a connection with the fact that the operator maps a Fock space into a smaller class.

\section{Preliminaries and notation}\label{sec:pre}

In this section we recall the definition of Fock spaces and their basic properties. As usual $dA$ denotes the area measure on the complex plane.

\begin{definition} \label{Z1}Let $0<p\le\infty$ and $\alpha>0$. The Fock space $F^p_\alpha$ consists of all entire functions $f$ such that $\|f\|_{p,\alpha} < \infty,$ where
	$$\|f\|_{p,\alpha}= \Big(\frac{\alpha p}{2\pi}\int_\C|f(z)e^{-\frac{\alpha}{2}|z|^2}|^pdA(z)\Big)^{1/p}$$ for $0 < p < \infty$ while
	$$\|f\|_{\infty,\alpha}=\sup_{z\in \C} |f(z)|e^{-\frac{\alpha}{2}|z|^2}.$$
We denote by $f^\infty_\alpha$ the subspace of $F^\infty_\alpha$ given by entire functions $f$ such that
$$\lim_{|z|\to \infty} |f(z)|e^{-\frac{\alpha}{2}|z|^2}=0.$$
\end{definition}  In the case $1 \leq p\leq \infty,$ $F^p_\alpha$ is a Banach space with norm $\|\cdot\|_{p,\alpha}.$ For $0 < p < 1,$ $F^p_\alpha$ is a complete metric space with the distance $d(f,g) = \|f-g\|_{p,\alpha}^p.$
\par\medskip
 It turns out that $F^p_\alpha\subset F^q_\alpha\subset f^\infty_\alpha$ with continuous inclusions for every $0 < p < q < \infty$ (see \cite[Theorem 2.10]{Z1}).
	\par\medskip
As usual $$M_p(f,r)=\left(\frac{1}{2\pi}\int_0^{2\pi} |f(re^{i\theta})|^p d\theta\right)^{1/p},\  0 < p < \infty$$ and $\displaystyle M_\infty(f,r)=\sup_{|z|=r}|f(z)|$. Hence
\begin{equation}\label{eq:norm-fock}\|f\|_{p,\alpha}= \Big(\alpha p\int_0^\infty M_p(f,r)^pe^{-\frac{\alpha p}{2}r^2} r\ dr\Big)^{1/p},\ 0 < p < \infty,\end{equation}
and
$$\|f\|_{\infty,\alpha}=\sup_{r\ge 0} M_\infty(f,r)e^{-\frac{\alpha}{2}r^2}.$$
In the sequel we will denote $u_n(z) = z^n.$ According to \cite[Page 40]{Z1}
\begin{equation}\label{nun}
	\|u_n\|^p_{p,\alpha}= (\frac{2}{\alpha p})^{\frac{np}{2}}\Gamma(\frac{np}{2}+1), \quad 0 < p < \infty,
\end{equation} and
\begin{equation}\label{nuinf}
	\|u_n\|_{\infty,\alpha}= (\frac{n}{\alpha e})^{n/2}.
\end{equation}
\par\medskip
In what follows we use the notation $A_n\lesssim B_n$ to mean the existence of $c > 0$ such that
$A_n \leq c B_n \ \forall n\in {\mathbb N}.$ We also set $A_n\asymp B_n$ when $A_n\lesssim B_n$ and $B_n\lesssim A_n.$
When we do not use this notation in proofs, a constant $C$ may appear that is not always the same.
\par\medskip
Then for $n\in \mathbb N,$
\begin{equation}\label{normaun}
\|u_n\|_{p,\alpha}\asymp (\frac{n}{\alpha e})^{n/2} n^{\frac{1}{2p}}\asymp \sqrt{\frac{n!}{\alpha^n}} n^{\frac{1}{2p}-\frac{1}{4}},\ \ \|u_n\|_{\infty,\alpha} \asymp \sqrt{\frac{n!}{\alpha^n}} n^{-\frac{1}{4}}.\end{equation}

It is known that the Taylor series of a function in $F^1_\alpha$ does not have to converge to the function in the norm of $F^1_\alpha$ . Despite this, we will sometimes use the notation $\displaystyle f = \sum_{n=0}^{\infty}a_n u_n$ to indicate that $\displaystyle f(z) = \sum_{n=0}^{\infty}a_n z^n$ for every $z\in {\mathbb C}.$ For $\displaystyle f = \sum_{n=0}^{\infty}a_n u_n\in F^2_\alpha$ we have
$$
\|f\|^2_{2,\alpha} = \sum_{n=0}^\infty |a_n|^2 \frac{n!}{\alpha^n}.$$
\par\medskip
We recall that the reproducing kernel of $F^2_\alpha$ is given by
$$
K_\alpha(z,w)=e^{\alpha z\bar w}= \sum_{n=0}^\infty \frac{\alpha^n \bar w}{n!} u_n(z).$$ Hence \begin{equation}\label{repro}f(z)= \int_\C K_\alpha(z,w) f(w) d\lambda_\alpha(w)\end{equation}
for any $f\in F^2_\alpha$ where  $d\lambda_\alpha(w)=\frac{\alpha}{\pi} e^{-\alpha |w|^2} dA(w)$ for any $\alpha>0$.
In particular
\begin{equation} \|K_\alpha(\cdot, a)\|_{2,\alpha}= K_\alpha(a, a)^{1/2}=e^{\frac{\alpha}{2}|a|^2},\ a\in \mathbb C.
\end{equation}

\begin{proposition} \label{p1} Let $\alpha,\beta>0$ and $a\in \C$. Then
\begin{equation} \label{nrk}
\|K_\beta(\cdot,a)\|_{p,\alpha}= e^{\frac{\beta^2 |a|^2}{2\alpha}}, \quad 0<p\le\infty
\end{equation}
\end{proposition}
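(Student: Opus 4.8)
The plan is to treat the ranges $0<p<\infty$ and $p=\infty$ separately, reducing in each case to a one‑dimensional computation by exploiting the rotational symmetry of the problem.

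\emph{Case $0<p<\infty$.} From the definition of $\|\cdot\|_{p,\alpha}$,
$$\|K_\beta(\cdot,a)\|_{p,\alpha}^p=\frac{\alpha p}{2\pi}\int_\C\bigl|e^{\beta\bar a z}\bigr|^p e^{-\frac{\alpha p}{2}|z|^2}\,dA(z),$$
and since $\beta>0$ we have $\bigl|e^{\beta\bar a z}\bigr|^p=e^{p\beta\,\mathrm{Re}(\bar a z)}$. Writing $a=|a|e^{i\varphi}$ and changing variables $z=e^{i\varphi}\zeta$, which preserves both $|z|^2$ and $dA$, turns $\mathrm{Re}(\bar a z)$ into $|a|\,\mathrm{Re}\,\zeta$, so we may assume $a=|a|\ge 0$ is real. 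With $z=x+iy$ the integral then factors as $\bigl(\int_{\mathbb R}e^{p\beta|a|x-\frac{\alpha p}{2}x^2}\,dx\bigr)\bigl(\int_{\mathbb R}e^{-\frac{\alpha p}{2}y^2}\,dy\bigr)$. Completing the square, $p\beta|a|x-\frac{\alpha p}{2}x^2=-\frac{\alpha p}{2}\bigl(x-\frac{\beta|a|}{\alpha}\bigr)^2+\frac{p\beta^2|a|^2}{2\alpha}$, so the first factor equals $e^{p\beta^2|a|^2/(2\alpha)}\sqrt{2\pi/(\alpha p)}$ and the second equals $\sqrt{2\pi/(\alpha p)}$. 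Multiplying by $\frac{\alpha p}{2\pi}$, the Gaussian normalizations cancel and $\|K_\beta(\cdot,a)\|_{p,\alpha}^p=e^{p\beta^2|a|^2/(2\alpha)}$; taking $p$-th roots gives \eqref{nrk}.

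\emph{Case $p=\infty$.} Here $\|K_\beta(\cdot,a)\|_{\infty,\alpha}=\sup_{z\in\C}e^{\beta\,\mathrm{Re}(\bar a z)-\frac{\alpha}{2}|z|^2}$. The case $a=0$ is trivial; for $a\ne 0$ one has $\mathrm{Re}(\bar a z)\le |a|\,|z|$ with equality when $z=r\,a/|a|$, $r\ge 0$, so the supremum equals $\exp\bigl(\sup_{r\ge 0}(\beta|a|r-\frac{\alpha}{2}r^2)\bigr)=e^{\beta^2|a|^2/(2\alpha)}$, the inner maximum being attained at $r=\beta|a|/\alpha$.

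The argument is essentially routine and I do not expect a genuine obstacle; the only points requiring a little care are the reduction to real $a$ via rotation invariance and the bookkeeping confirming that the constant $\frac{\alpha p}{2\pi}$ exactly cancels the product of the two one‑dimensional Gaussian normalizations. As a sanity check, the value is independent of $p$, in agreement with the $p=\infty$ computation, and taking $\beta=\alpha$, $p=2$ recovers $\|K_\alpha(\cdot,a)\|_{2,\alpha}=e^{\alpha|a|^2/2}$, consistent with the value recorded just before the statement.
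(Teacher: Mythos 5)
Your proof is correct. It takes a more self-contained route than the paper: you evaluate the integral directly by rotating so that $a\ge 0$, separating variables, and completing the square in the resulting one-dimensional Gaussian, and your bookkeeping with the normalizing constant $\frac{\alpha p}{2\pi}$ checks out. The paper instead quotes Zhu's formula $\|K_\beta(\cdot,a)\|_{L^1(\lambda_\alpha)}=e^{\beta^2|a|^2/(4\alpha)}$ and reduces the general case to it via the identity $\|f\|_{p,\alpha}^p=\| |f|^p\|_{L^1(\lambda_{p\alpha/2})}$ combined with $|K_\beta(\cdot,a)|^p=|K_{p\beta}(\cdot,a)|$; this avoids redoing the Gaussian integral and isolates a scaling identity that is reusable elsewhere, at the cost of relying on an external reference. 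For $p=\infty$ both arguments amount to the same elementary maximization of $\beta\,\mathrm{Re}(\bar a z)-\frac{\alpha}{2}|z|^2$ (the paper phrases it through the rescaling $\|K_\beta(\cdot,a)\|_{\infty,\alpha}=\|K_\alpha(\cdot,\frac{\beta}{\alpha}a)\|_{\infty,\alpha}$). Either version is a complete proof.
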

\begin{proof} It was shown in \cite[Corollary 2.5]{Z1} that
\begin{equation} \label{norma1}
\|K_\beta(\cdot, a)\|_{L^1(\lambda_\alpha)}= e^{\frac{\beta^2|a|^2}{4\alpha}}, \quad \alpha,\beta>0,\ a\in \C.
\end{equation}

For the case $0 < p < \infty,$ taking into account that
$\|f\|^p_{p,\alpha}= \| |f|^p\|_{L^1(\lambda_{p\alpha/2})}$ we obtain that
$$\|K_\beta(\cdot,a)\|^p_{p,\alpha}= \|K_{p\beta}(\cdot,a)\|_{L^1(\lambda_{p\alpha/2})}= e^{\frac{\beta^2p|a|^2}{2\alpha}}.$$
On the other hand,
$$ \|K_\alpha(\cdot,a)\|_{\infty,\alpha}=\sup_{z\in \C} e^{-\frac{\alpha}{2} (|z|^2-2\Re(z\bar a))}=e^{\frac{\alpha}{2}|a|^2}.$$
Hence, since $\|K_\beta(\cdot,a)\|_{\infty,\alpha}=\|K_\alpha(\cdot, \frac{\beta}{\alpha} a)\|_{\infty,\alpha}$ we obtain the case $p=\infty$.
\end{proof}

To finish this section we recall that for $1\leq p < \infty,$ the dual $(F^p_\alpha)^\ast$ can be identified with $F^{p'}_\alpha,\ \frac{1}{p} + \frac{1}{p'} = 1,$ (see \cite[Theorem 2.23]{Z1}) and  $(f^\infty_\alpha)^\ast= F^1_\alpha$ (see \cite[Theorem 2.26]{Z1}) under the pairing
$$
\langle f, g\rangle_\alpha = \frac{\alpha}{\pi}\int_{{\mathbb C}}f(z) \overline{g(z)} e^{-\alpha |z|^2}\ d A(z),\ f\in F^p_\alpha,\ g\in F^{p'}_\alpha.$$ Moreover, for $\displaystyle f = \sum_{n=0}^{\infty}a_n u_n$ and $\displaystyle g = \sum_{n=0}^{\infty}b_n u_n$  we have $\displaystyle \langle f, g\rangle_\alpha = \sum_{n=0}^\infty a_n \overline{b_n}\frac{n!}{\alpha^n}.$

\section{Mixed normed Fock spaces}\label{sec:mixed}

The expression (\ref{eq:norm-fock}) for the norm in the Fock spaces suggests the following definition.

\begin{definition}\label{Z2} Let $0 < p, q\leq\infty$ and $\alpha>0.$ The Fock space $F^{p,q,\alpha}$ consists of all entire functions $f$ such that $\|f\|_{p,q,\alpha} < \infty,$ where
	$$\|f\|_{p,q,\alpha}= \Big(\alpha q\int_0^\infty M_p(f,r)^qe^{-\frac{\alpha q}{2}r^2} rdr\Big)^{1/q} < \infty,\ \quad 0< q < \infty,$$ and 	
	$$\|f\|_{p,\infty,\alpha} = \sup_{r\geq 0}\ M_p(f,r)e^{-\frac{\alpha}{2}r^2}.$$
\end{definition}

In the case $p = q$ we recover the Fock spaces: $F^{p,p,\alpha} = F^p_\alpha.$
\par\medskip
Since $M_p(u_n, r) = M_q(u_n, r) = r^n$ for all $0 < p,q\leq \infty$ then \begin{equation}\label{eq:mixednorm-monomial}\|u_n\|_{p,q,\alpha} = \|u_n\|_{q,\alpha}.\end{equation}
\par\medskip
The family $F^{p,q,\alpha}$ decreases with $p$ and increases with $q.$

\begin{proposition}\label{embeddings1} Let $0<p,q\le \infty$, $0<p_1\le p_2\le \infty$ and $0<q_1\le q_2\le \infty$. Then
\begin{itemize}
\item[(i)] $F^{\infty,q,\alpha}\subseteq F^{p_2,q,\alpha} \subseteq F^{p_1,q,\alpha}$ and $$ \|f\|_{p_1,q,\alpha}\le \|f\|_{p_2,q,\alpha}\le \|f\|_{\infty,q,\alpha}.$$
\item[(ii)] $F^{p,q_1,\alpha}\subseteq F^{p,q_2,\alpha} \subseteq F^{p,\infty,\alpha}$ and $$\|f\|_{p,\infty,\alpha}\le \|f\|_{p,q_2,\alpha}\le (\frac{q_2}{q_1})^{1/q_2}\|f\|_{p,q_1,\alpha}.$$
\end{itemize}	
\end{proposition}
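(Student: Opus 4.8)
The plan is to handle the two statements separately, in each case reducing the vector-valued inequality to a pointwise (in the radial variable $r$) inequality for the quantities $M_p(f,r)$, and then integrating or taking suprema.

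For part (i), the key fact is the classical monotonicity of the integral means: for a fixed entire function $f$ and fixed $r\ge 0$, the map $p\mapsto M_p(f,r)$ is non-decreasing on $(0,\infty]$. This is an immediate consequence of Jensen's (or H\"older's) inequality applied to the probability measure $\frac{d\theta}{2\pi}$ on $[0,2\pi]$, together with the fact that $M_p(f,r)\to M_\infty(f,r)$ as $p\to\infty$. Granting this, for any $0<p_1\le p_2\le\infty$ one has $M_{p_1}(f,r)\le M_{p_2}(f,r)\le M_\infty(f,r)$ for every $r$. Multiplying through by the (positive) weight $e^{-\frac{\alpha}{2}r^2}$ and either raising to the $q$-th power and integrating against $\alpha q\, r\,dr$ (if $q<\infty$) or taking the supremum over $r\ge 0$ (if $q=\infty$) yields the chain of norm inequalities and, in particular, the corresponding inclusions of spaces.

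For part (ii), fix $p$ and write $g(r):=M_p(f,r)e^{-\frac{\alpha}{2}r^2}$, a continuous non-negative function of $r\ge 0$. The statement is then purely about the weighted $L^{q}$ ``norms'' of $g$ with respect to the measure $d\nu(r)=\alpha\, r\, dr$, except that the normalizing constant $\alpha q$ depends on $q$; concretely $\|f\|_{p,q,\alpha}=\big(q\int_0^\infty g(r)^q\, d\nu(r)\big)^{1/q}$ for $q<\infty$ and $\|f\|_{p,\infty,\alpha}=\sup_r g(r)$. The inequality $\|f\|_{p,\infty,\alpha}\le\|f\|_{p,q_2,\alpha}$ follows from testing the $L^{q_2}(\nu)$ integral against the obvious lower bound: if $g(r_0)>\lambda$ on a $\nu$-positive set, estimate the integral from below; more cleanly, use that for $g$ attaining values near its supremum on a set of positive $\nu$-measure (which happens because $g(r)\to 0$ as $r\to\infty$, so the sup is either attained or approached on a non-degenerate interval) one gets $q\int_0^\infty g^{q_2}d\nu\ge \|g\|_\infty^{q_2}$ after using $\int_0^\infty q e^{-\frac{\alpha q}{2}r^2}\alpha r\,dr=1$ with $q=q_2$. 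The monotonicity $\|f\|_{p,q_1,\alpha}\gtrsim\|f\|_{p,q_2,\alpha}$ (with the stated constant $(q_2/q_1)^{1/q_2}$) is the heart of the matter: one compares $\|g\|_{L^{q_2}(q_2\nu)}$ with $\|g\|_{L^{q_1}(q_1\nu)}$ for $q_1\le q_2$. The standard trick is to bound $g(r)^{q_2}=g(r)^{q_1}\cdot g(r)^{q_2-q_1}\le g(r)^{q_1}\cdot\|g\|_\infty^{q_2-q_1}$, integrate against $q_2 d\nu$, and then use the already-established bound $\|g\|_\infty\le\|f\|_{p,q_1,\alpha}$ together with $\|g\|_{L^{q_1}(d\nu)}^{q_1}=\frac{1}{q_1}\|f\|_{p,q_1,\alpha}^{q_1}$; collecting the powers of $q_1,q_2$ produces exactly the constant $(q_2/q_1)^{1/q_2}$.

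The main obstacle, such as it is, is purely bookkeeping: keeping track of the $q$-dependent normalizing constants $\alpha q$ so that the final constant comes out as the claimed $(q_2/q_1)^{1/q_2}$ rather than something off by a power of $q$, and making the lower bound $\|g\|_\infty\le\|f\|_{p,q_2,\alpha}$ rigorous in the borderline case where the supremum of $g$ is not attained (handled by a routine limiting argument using continuity of $g$ and $g(r)\to 0$ at infinity). No genuinely hard analysis is involved; the two non-trivial inputs are the monotonicity of $p\mapsto M_p(f,r)$ for part (i) and the interpolation-type splitting $g^{q_2}\le g^{q_1}\|g\|_\infty^{q_2-q_1}$ for part (ii).
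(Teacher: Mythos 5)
Your part (i) and the interpolation splitting $g^{q_2}\le g^{q_1}\,\|g\|_\infty^{q_2-q_1}$ in part (ii) coincide with the paper's argument, and your bookkeeping of the normalizing constants does yield the stated $(q_2/q_1)^{1/q_2}$. The gap is in your proof of the first inequality of (ii), $\|f\|_{p,\infty,\alpha}\le\|f\|_{p,q_2,\alpha}$, on which the rest of (ii) depends (you reuse it with $q_1$ in place of $q_2$ to close the splitting argument). The mechanism you propose --- that $g(r)=M_p(f,r)e^{-\frac{\alpha}{2}r^2}$ is continuous and tends to $0$, hence is near its supremum on a set of positive measure, whence $\alpha q_2\int_0^\infty g^{q_2}\,r\,dr\ge\|g\|_\infty^{q_2}$ --- is not a proof: a continuous function vanishing at infinity can be near its supremum only on an arbitrarily short interval, in which case the weighted integral is much smaller than $\|g\|_\infty^{q_2}$. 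The normalization $\int_0^\infty \alpha q_2 e^{-\frac{\alpha q_2}{2}r^2}r\,dr=1$ helps only if you control \emph{where} the set $\{g\approx\|g\|_\infty\}$ sits and how much mass it carries, and your argument does not.

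The missing ingredient is the classical fact that $r\mapsto M_p(f,r)$ is non-decreasing for entire $f$ (subharmonicity of $|f|^p$, equivalently Hardy's convexity theorem). With it, for each fixed $r$,
$$
M_p(f,r)^{q_2}e^{-\frac{\alpha q_2}{2}r^2}=M_p(f,r)^{q_2}\cdot \alpha q_2\int_r^\infty e^{-\frac{\alpha q_2}{2}s^2}s\,ds\le \alpha q_2\int_r^\infty M_p(f,s)^{q_2}e^{-\frac{\alpha q_2}{2}s^2}s\,ds\le\|f\|_{p,q_2,\alpha}^{q_2},
$$
which is exactly the paper's one-line proof of this step: the tail mass $e^{-\frac{\alpha q_2}{2}r^2}$ of the probability measure $\alpha q_2 e^{-\frac{\alpha q_2}{2}s^2}s\,ds$ cancels precisely against the weight in $g(r)^{q_2}$. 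Replacing your measure-theoretic heuristic by this monotonicity argument makes the rest of your proof of (ii) go through verbatim.
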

\begin{proof}
Statement (i) follows from $M_{p_1}(f,r)\le M_{p_2}(f,r)\le M_\infty(f,r).$

To check (ii) we first take $f\in  F^{p,q_2,\alpha}$. Since $M_p(f,r)$ is increasing we have
$$ M^{q_2}_p(f,r) e^{-\frac{\alpha q_2}{2}r^2}\le (\alpha q_2)\int_r^\infty M_p(f,s)^{q_2}e^{-\frac{\alpha q_2}{2}s^2}sds\le\|f\|^{q_2}_{p,q_2,\alpha}. $$
This gives the inclusion $F^{p,q_2,\alpha} \subseteq F^{p,\infty,\alpha}$ and the inequality $ \|f\|_{p,\infty,\alpha}\le \|f\|_{p,q_2,\alpha}$.
On the other hand
if $f\in  F^{p,q_1,\alpha}$ and using that $F^{p,q_1,\alpha}\subset F^{p,\infty,\alpha}$ we have
$$
\begin{array}{*2{>{\displaystyle}l}} \|f\|^{q_2}_{p,q_2,\alpha}&\le \|f\|_{p,\infty,\alpha}^{q_2-q_1}\frac{q_2}{q_1}(\alpha q_1)\int_0^\infty M_p(f,r)^{q_1}e^{-\frac{\alpha q_1}{2}r^2} r\ dr\\ & \\
&= \frac{q_2}{q_1}\|f\|_{p,q_1,\alpha}^{q_1} \|f\|_{p,\infty,\alpha}^{q_2-q_1}\le \frac{q_2}{q_1}\|f\|_{p,q_1,\alpha}^{q_2}.
\end{array}$$
This shows the other inclusion and the inequality of norms.
\end{proof}

\begin{corollary} Let $0 < p < q < \infty.$ Then
	$$
	F^{\infty,p,\alpha}\subset F^{q,p,\alpha}\subset F^p_\alpha\subset F^q_\alpha\subset F^{p,q,\alpha}\subset F^{p,\infty,\alpha}.$$
\end{corollary}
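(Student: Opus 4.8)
The statement to prove is the chain of inclusions
$$
F^{\infty,p,\alpha}\subset F^{q,p,\alpha}\subset F^p_\alpha\subset F^q_\alpha\subset F^{p,q,\alpha}\subset F^{p,\infty,\alpha}
$$
for $0<p<q<\infty$. The plan is to read off each inclusion directly from Proposition \ref{embeddings1} together with the identification $F^{p,p,\alpha}=F^p_\alpha$ noted after Definition \ref{Z2} and the known inclusion $F^p_\alpha\subset F^q_\alpha$ recorded in Section \ref{sec:pre} (\cite[Theorem 2.10]{Z1}). Concretely: the first two inclusions $F^{\infty,p,\alpha}\subset F^{q,p,\alpha}\subset F^{p,p,\alpha}=F^p_\alpha$ are the case $p_1=p$, $p_2=q$, $q=p$ of part (i) of Proposition \ref{embeddings1} (the family decreases in the first index). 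The middle inclusion $F^p_\alpha\subset F^q_\alpha$ is the already-cited embedding between Fock spaces. The last two inclusions $F^q_\alpha=F^{q,q,\alpha}\subset F^{p,q,\alpha}\subset F^{p,\infty,\alpha}$ follow by applying part (i) of Proposition \ref{embeddings1} with second index $q$ to get $F^{q,q,\alpha}\subset F^{p,q,\alpha}$, and part (ii) with $q_1=q$, $q_2=\infty$ to get $F^{p,q,\alpha}\subset F^{p,\infty,\alpha}$ (the family increases in the second index).

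First I would note explicitly the two facts being combined so the reader need not hunt: $F^{p,p,\alpha}=F^p_\alpha$ and $F^{q,q,\alpha}=F^q_\alpha$. Then I would dispatch $F^{\infty,p,\alpha}\subset F^{q,p,\alpha}\subset F^p_\alpha$ by invoking Proposition \ref{embeddings1}(i), and $F^q_\alpha\subset F^{p,q,\alpha}$ likewise by Proposition \ref{embeddings1}(i). For $F^{p,q,\alpha}\subset F^{p,\infty,\alpha}$ I would invoke Proposition \ref{embeddings1}(ii). Finally, $F^p_\alpha\subset F^q_\alpha$ is just \cite[Theorem 2.10]{Z1} as already cited. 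Stringing these together gives the claimed chain, and the corresponding norm inequalities (with the harmless constant $(q/p)^{1/q}$ appearing only in the step $F^{p,q,\alpha}\subset F^{p,\infty,\alpha}$, or rather its reverse) are likewise immediate from the two parts of the proposition.

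There is essentially no obstacle here: this corollary is a bookkeeping consequence of Proposition \ref{embeddings1} and the standard Fock embedding, and the only thing to be careful about is matching the index roles correctly — the first index behaves oppositely to the second (decreasing versus increasing), and one must not accidentally interchange them. The single genuinely external input is the strict inclusion $F^p_\alpha\subset F^q_\alpha$ for $p<q$, which is not proved in the excerpt but is quoted from \cite[Theorem 2.10]{Z1}; everything else is internal.
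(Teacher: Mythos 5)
Your proposal is correct and matches the paper's (implicit) argument: the corollary is stated without proof as an immediate consequence of Proposition \ref{embeddings1} applied with the identifications $F^{p,p,\alpha}=F^p_\alpha$, $F^{q,q,\alpha}=F^q_\alpha$, together with the standard embedding $F^p_\alpha\subset F^q_\alpha$ from \cite[Theorem 2.10]{Z1}, exactly as you describe. (Only a trivial quibble: the constant $(q_2/q_1)^{1/q_2}$ from part (ii) is attached to the inclusion $F^{p,q_1,\alpha}\subseteq F^{p,q_2,\alpha}$, so the step $F^{p,q,\alpha}\subset F^{p,\infty,\alpha}$ actually carries constant $1$; this does not affect anything.)
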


\section{Hausdorff operators. Boundedness}\label{sec:bounded}

Let $\mu$ be a positive Borel measure on $(0, \infty)$. The Hausdorff operator is formally defined by
$$
\mathscr{H}_\mu(f)(z):=\int_0^\infty f\left(\frac{z}{t}\right)\ \frac{d\mu(t)}{t},\ \ f\in H({\mathbb C}).$$
Since we want $\mathscr{H}_\mu$ to be defined on $u_n$ for $n\in \mathbb N_0$ we need to assume that $\mu_n:= \int_0^\infty\frac{d\mu(t)}{t^{n+1}}<\infty,\ n\in {\mathbb N}_0.$ We first discuss the continuity of the Hausdorff operator when acting on the Fréchet space $H({\mathbb C})$ of entire functions. We recall that the compact open topology on that space can be described in terms of the family of norms $f\mapsto M_\infty(f,r),\ r > 0.$
Let us first present a lemma to be used later on.
\begin{lem}\label{prop:momentos} Let $\mu$ be a positive Borel measure on $(0, \infty)$ such that $\mu_n<\infty$ for $n\in \mathbb N_0$. Then
 $$\displaystyle\sup_n \sqrt[n]{\mu_n}<\infty \Longleftrightarrow \hbox{ there exists } \delta>0 \hbox{ such that }
		  \mu(0, \delta) = 0.$$
	Moreover $\delta$ can be taken $\frac{1}{\sup_{n\in \mathbb N} \sqrt[n]{\mu_n}}$ and $\sup_{n\in \mathbb N} \sqrt[n]{\mu_n}\le \frac{\max\{\mu_0,1\}}{\delta_0}$ where $\delta_0=\sup\{\delta>0: \mu(0,\delta)=0\}$.
\end{lem}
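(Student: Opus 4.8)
The plan is to prove both implications by elementary estimates on the moments $\mu_n = \int_0^\infty t^{-(n+1)}\,d\mu(t)$, exploiting the monotone behaviour of $t\mapsto t^{-n}$ on $(0,\infty)$.

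\emph{($\Leftarrow$) Suppose $\mu(0,\delta)=0$ for some $\delta>0$.} Then the integral defining $\mu_n$ is supported on $[\delta,\infty)$, so on the sub-interval $[\delta,1]$ (if $\delta<1$) one has $t^{-(n+1)}\le \delta^{-(n+1)}$, while on $[1,\infty)$ one has $t^{-(n+1)}\le t^{-1}$. Hence
\[
\mu_n \le \delta^{-(n+1)}\mu\big([\delta,1]\big) + \int_1^\infty \frac{d\mu(t)}{t} \le \delta^{-(n+1)}\mu\big([\delta,\infty)\big)\quad\text{when }\delta\le 1,
\]
and simply $\mu_n\le\delta^{-(n+1)}\mu_0$ when $\delta\ge 1$ (using $t^{-(n+1)}\le t^{-1}\cdot t^{-n}\le t^{-1}\delta^{-n}$ on $[\delta,\infty)$). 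In either case $\sqrt[n]{\mu_n}\le \delta^{-1}\cdot(\text{const})^{1/n}\to \delta^{-1}$, so $\sup_n\sqrt[n]{\mu_n}<\infty$. To get the stated quantitative bound $\sup_{n\ge1}\sqrt[n]{\mu_n}\le \max\{\mu_0,1\}/\delta_0$, I would take $\delta$ arbitrarily close to $\delta_0$ and split at $\delta_0$: on $[\delta_0,\infty)$, $t^{-(n+1)} = t^{-1}t^{-n}\le t^{-1}\delta_0^{-n}$, so $\mu_n\le \delta_0^{-n}\mu_0$ when $\mu_0\ge 1$; if $\mu_0<1$ one absorbs the extra factor into the base $1/\delta_0$ by writing $\mu_n\le\delta_0^{-n}\max\{\mu_0,1\}$, whence $\sqrt[n]{\mu_n}\le \max\{\mu_0,1\}^{1/n}/\delta_0\le \max\{\mu_0,1\}/\delta_0$ for $n\ge1$.

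\emph{($\Rightarrow$) Suppose $C:=\sup_n\sqrt[n]{\mu_n}<\infty$.} Put $\delta=1/C$. For any $0<\varepsilon<\delta$, restricting the integral to $(0,\varepsilon)$ gives, for every $n$,
\[
\mu(0,\varepsilon)\,\varepsilon^{-(n+1)} \le \int_0^\varepsilon \frac{d\mu(t)}{t^{n+1}} \le \mu_n \le C^n,
\]
so $\mu(0,\varepsilon)\le \varepsilon\,(C\varepsilon)^n\to 0$ as $n\to\infty$ since $C\varepsilon<C\delta=1$. Hence $\mu(0,\varepsilon)=0$ for every $\varepsilon<\delta$, and letting $\varepsilon\uparrow\delta$ gives $\mu(0,\delta)=0$, with $\delta=1/\sup_{n\in\mathbb N}\sqrt[n]{\mu_n}$ as claimed (and therefore $\delta_0\ge 1/C$, i.e. $C\ge 1/\delta_0$, which is consistent with the reverse inequality above).

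The only mildly delicate point is bookkeeping around the base of the root: making sure the normalization constants ($\mu_0$ versus $1$, and the split point $1$ versus $\delta$) are handled so that the clean quantitative statements $\delta=1/\sup_n\sqrt[n]{\mu_n}$ and $\sup_{n\in\mathbb N}\sqrt[n]{\mu_n}\le\max\{\mu_0,1\}/\delta_0$ come out exactly; the analytic content is just the elementary two fact that $t^{-n}$ is monotone and that a geometric factor $(C\varepsilon)^n$ with $C\varepsilon<1$ kills any polynomial prefactor.
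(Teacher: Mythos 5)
Your proof is correct and follows essentially the same route as the paper: for ($\Rightarrow$) the bound $\mu(0,\varepsilon)\varepsilon^{-(n+1)}\le\mu_n\le C^n$ with $C\varepsilon<1$, and for ($\Leftarrow$) the factorization $t^{-(n+1)}\le t^{-1}\delta_0^{-n}$ on $[\delta_0,\infty)$ giving $\mu_n\le\mu_0\delta_0^{-n}$ and hence the stated quantitative bounds. One cosmetic caveat: your intermediate bound $\mu_n\le\delta^{-(n+1)}\mu([\delta,\infty))$ is useless when $\mu$ has infinite total mass (which the hypothesis $\mu_0<\infty$ does not rule out), but the argument you actually commit to for the quantitative estimate bypasses this entirely, exactly as the paper does.
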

\begin{proof}
Assume that $C = \displaystyle\sup_n \sqrt[n]{\mu_n} < \infty$ and let $\delta<\frac{1}{C}$ be given. Then
	$$
	\frac{\mu(0,\delta)}{\delta^{n+1}} \leq \int_0^\delta \frac{d\mu(t)}{t^{n+1}} \leq C^n,$$ from where it follows $\mu(0,\delta)\leq \delta\left(\delta C\right)^n$ for every $n\in {\mathbb N}.$ Then $\mu(0,\delta) = 0$ for any $\delta<\frac{1}{C}$ and therefore $\mu(0, \frac{1}{C})=0$

	Let us now assume $\mu\neq 0$ and $\mu(0, \delta) = 0$ for some $\delta > 0$ and consider $\delta_0=\sup\{ \delta>0: \mu(0,\delta)=0\}$. Then
	$$
	\mu_n = \int_{\delta_0}^\infty \frac{d\mu(t)}{t^{n+1}} \leq \frac{1}{\delta_0^n}\int_{\delta_0}^\infty\frac{d\mu(t)}{t}\le \frac{\mu_0}{\delta_0^n}.$$
Hence $\sqrt[n]{\mu_n}\le \frac{\max\{\mu_0,1\}}{\delta_0}$ for all $n\in \mathbb N$.
\end{proof}

\begin{proposition}\label{prop:continuity-entire} The following conditions are equivalent:
	\begin{itemize}
		\item[(i)] $\mathscr{H}_\mu:H({\mathbb C})\to H({\mathbb C})$ is well-defined and continuous.
		\item[(ii)] $\displaystyle\sup_n \sqrt[n]{\mu_n} < \infty.$
	\end{itemize}
\end{proposition}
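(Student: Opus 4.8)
The natural strategy is to test the operator on the monomials $u_n$ and, conversely, to use the action on monomials together with a Cauchy-estimate/power-series argument to reconstruct the full operator. Since the compact-open topology on $H(\mathbb C)$ is generated by the seminorms $f\mapsto M_\infty(f,r)$, continuity of $\mathscr H_\mu$ amounts to: for each $r>0$ there is $R>0$ and $C>0$ with $M_\infty(\mathscr H_\mu f, r)\le C\,M_\infty(f,R)$ for all $f\in H(\mathbb C)$.

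\textbf{(ii) $\Rightarrow$ (i).} Assume $C_0:=\sup_n\sqrt[n]{\mu_n}<\infty$. First one checks the formal series $\mathscr H_\mu f(z)=\sum_n a_n\mu_n z^n$ for $f=\sum_n a_n u_n$: indeed $\mathscr H_\mu u_n(z)=\int_0^\infty (z/t)^n\,d\mu(t)/t=\mu_n z^n$, so termwise application gives this identity, and one must justify interchanging sum and integral. Using the Cauchy estimates $|a_n|\le M_\infty(f,\rho)\rho^{-n}$ for any $\rho>0$, we get, for $|z|=r$,
$$
\sum_n |a_n|\,\mu_n\, r^n \le M_\infty(f,\rho)\sum_n \Big(\frac{C_0 r}{\rho}\Big)^n,
$$
which converges and is finite as soon as $\rho> C_0 r$; this simultaneously shows $\mathscr H_\mu f$ is entire, that the interchange is legitimate (dominated convergence against $\mu$, since $\int_0^\infty \sum_n |a_n||z/t|^n\,d\mu(t)/t<\infty$ by the same estimate applied with the moments $\mu_n$), and that $M_\infty(\mathscr H_\mu f,r)\le M_\infty(f,\rho)\cdot \frac{\rho}{\rho-C_0 r}$. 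Hence $\mathscr H_\mu:H(\mathbb C)\to H(\mathbb C)$ is well-defined and continuous.

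\textbf{(i) $\Rightarrow$ (ii).} Suppose $\mathscr H_\mu$ is continuous. Fix $r=1$: there are $R>0$, $C>0$ with $M_\infty(\mathscr H_\mu f,1)\le C\,M_\infty(f,R)$ for all $f$. Apply this to $f=u_n$: $M_\infty(\mathscr H_\mu u_n,1)=M_\infty(\mu_n u_n,1)=\mu_n$, while $M_\infty(u_n,R)=R^n$, so $\mu_n\le C R^n$, i.e. $\sqrt[n]{\mu_n}\le R\,C^{1/n}\to R$, giving $\sup_n\sqrt[n]{\mu_n}<\infty$. (Even more directly, $\mathscr H_\mu$ well-defined on $H(\mathbb C)$ already forces each $\mu_n<\infty$, and continuity is exactly what controls the growth rate of $\mu_n$.)

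\textbf{Main obstacle.} The only genuinely delicate point is the rigorous justification in (ii)$\Rightarrow$(i) that $\mathscr H_\mu f$, defined by the integral, is entire and equals $\sum_n a_n\mu_n z^n$ — i.e. the Fubini/Morera argument that swaps $\int_0^\infty$ with the Taylor expansion of $f$. Everything else is a short computation with Cauchy estimates. One clean way to package it: show the partial sums $\sum_{n\le N} a_n (z/t)^n$ converge to $f(z/t)$ uniformly for $t\in(0,\infty)$ and $z$ in a compact set once one splits off the (harmless, by Lemma \ref{prop:momentos}) region $t\ge\delta$ where $\mu$ lives — though note at this stage we have not yet assumed $\mu$ is supported away from $0$; the bound $\mu_n\le C_0^n$ is precisely the integrated substitute for that, and it is what makes the dominated-convergence estimate $\int_0^\infty\sum_n|a_n||z|^n t^{-n}\,d\mu(t)/t\le M_\infty(f,\rho)\sum_n(C_0|z|/\rho)^n<\infty$ go through for $\rho>C_0|z|$.
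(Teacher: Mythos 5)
Your proposal is correct and follows essentially the same route as the paper: test on the monomials $u_n$ for (i)$\Rightarrow$(ii), and for (ii)$\Rightarrow$(i) use the Cauchy estimates $|a_n|\le M_\infty(f,\rho)\rho^{-n}$ together with $\mu_n\le C_0^n$ to justify the termwise identity $\mathscr H_\mu f=\sum_n a_n\mu_n u_n$ by dominated convergence and to obtain the seminorm bound (the paper's choice $\rho=2rR$ is just your estimate with a specific $\rho>C_0r$). No gaps.
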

\begin{proof}
	$(i)\Rightarrow (ii).$ If $\mathscr{H}_\mu(f)$ is well-defined for $u_n$ then $\mu_n < \infty.$ From the continuity we have that there exist $C > 0$ and $r\geq 1$ such that
	$$
	M_\infty\left(\mathscr{H}_\mu(f), 1\right)\leq C M_\infty(f,r)\ \ \forall f\in H({\mathbb C}).$$ We now take $f = u_n$ to conclude $\mu_n \leq C r^n$ for every $n\in {\mathbb N}_0.$
	\par\medskip
	$(ii)\Rightarrow (i).$ For every entire function $\displaystyle f(z) = \sum_{n=0}^\infty a_n z^n$ we have
	$$
	\sum_{n=0}^\infty |a_n|\cdot |z|^n \int_0^\infty\frac{d\mu(t)}{t^{n+1}} =\sum_{n=0}^\infty |a_n|\mu_n |z|^n < \infty,$$ so convergence dominated theorem permits to conclude that
	$$
	\mathscr{H}_\mu(f)(z) = \sum_{n=0}^\infty a_n \mu_n z^n$$ and $\mathscr{H}_\mu(f)$ is an entire function. We now fix $R\geq 1$ so that $\mu_n \leq R^n$ for every $n\in {\mathbb N}_0.$ For every $r > 0$ and $\displaystyle f(z) = \sum_{n=0}^\infty a_n z^n$ we have $(2rR)^n |a_n| \leq M_\infty(f, 2rR).$ Hence
	$$
	M_\infty\left(\mathscr{H}_\mu(f), r\right)\leq \sum_{n=0}^\infty |a_n|\mu_n r^n \leq M_\infty(f, 2rR) \sum_{n=0}^\infty 2^{-n},$$ from where it follows the continuity of $\mathscr{H}_\mu:H({\mathbb C})\to H({\mathbb C}).$
	\par\medskip
\end{proof}
\par\medskip
We want to discuss under which conditions $\mathscr{H}_\mu$ defines a bounded operator on mixed Fock spaces.
\par\medskip
As mentioned above we have to impose the condition that $\mu_n < \infty$ for every $n\in {\mathbb N}_0$, since it is essential for $\mathscr{H}_\mu(u_n)$ to make sense, and there is no loss of generality if we assume that $\mu_0 = 1.$
Under the extra condition that $\mu(0, \delta) = 0$ for some $\delta > 0$, by Lemma \ref{prop:momentos} and Proposition \ref{prop:continuity-entire}, the operator $\mathscr{H}_\mu:F^{p_1, q_1, \alpha}\to F^{p_2, q_2, \alpha}$ has closed graph as long as it is well defined and then
the continuity of $\mathscr{H}_\mu:F^{p_1, q_1, \alpha}\to F^{p_2, q_2, \alpha}$ is equivalent to the inclusion $\mathscr{H}_\mu(F^{p_1, q_1, \alpha}) \subset F^{p_2, q_2, \alpha}.$

\begin{theorem}\label{mainteo} Let $\mu$ be a positive Borel measure on $(0,\infty)$ satisfying the condition $\int_0^\infty \frac{d\mu(t)}{t}=1.$ The following are equivalent:
	\begin{itemize}
		\item[(i)] $\mathscr{H}_\mu: F^p_\alpha\to F^p_\alpha$ is bounded for all $0<p\le \infty$ and $\alpha>0.$
		\item[(ii)] There exist $0<p\le \infty$ and $\alpha>0$ such that $\mathscr{H}_\mu: F^p_\alpha\to F^p_\alpha$ is bounded.
		\item[(iii)] There exist $0<p,q\leq \infty$ and $\alpha>0$ such that $\mathscr{H}_\mu:  F^{\infty,q,\alpha}\to F^{p,\infty,\alpha}$ is bounded.
		\item[(iv)] $\mu(0,1)=0.$
		\item[(v)] $\sup_{n\ge 0} \mu_n < \infty.$
		\item[(vi)] $\mathscr{H}_\mu: F^{p,q,\alpha}\to F^{p,q,\alpha}$ is bounded for all $0<p,q\le \infty$ and $\alpha>0.$
	\end{itemize}
\end{theorem}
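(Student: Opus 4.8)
The whole argument turns on the single fact that $\mathscr{H}_\mu u_n=\mu_n u_n$ for every $n$ (this is the computation in the proof of Proposition \ref{prop:continuity-entire}: $\mathscr{H}_\mu f=\sum_n a_n\mu_n u_n$ when $f=\sum_n a_n u_n$), so that on the monomial ``basis'' $\mathscr{H}_\mu$ is diagonal with eigenvalues $\mu_n$. This already yields all the necessity implications. Testing a bound $\mathscr{H}_\mu:F^p_\alpha\to F^p_\alpha$ on $u_n$ gives $\mu_n\le\|\mathscr{H}_\mu\|$, so $(ii)\Rightarrow(v)$; testing a bound $\mathscr{H}_\mu:F^{\infty,q,\alpha}\to F^{p,\infty,\alpha}$ on $u_n$ and using \eqref{eq:mixednorm-monomial} and \eqref{normaun} gives $\mu_n\le\|\mathscr{H}_\mu\|\,\|u_n\|_{q,\alpha}/\|u_n\|_{\infty,\alpha}\asymp n^{1/(2q)}$, a subexponential bound. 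On the other hand, if $\mu(0,1)>0$ then $\mu(0,\delta_1)=:c>0$ for some $\delta_1<1$, whence $\mu_n\ge c\,\delta_1^{-(n+1)}$ grows exponentially (compare Lemma \ref{prop:momentos}); thus any subexponential bound on $(\mu_n)$ forces $\mu(0,1)=0$. This proves $(iii)\Rightarrow(iv)$ and $(v)\Rightarrow(iv)$, while $\mu(0,1)=0$ trivially gives $\mu_n=\int_1^\infty t^{-n-1}\,d\mu\le\mu_0=1$, i.e.\ $(iv)\Rightarrow(v)$; so $(iv)\Leftrightarrow(v)$. Finally $(vi)\Rightarrow(i)$ is the diagonal case $p=q$, $(i)\Rightarrow(ii)$ is trivial, and $(vi)\Rightarrow(iii)$ follows by composing the case $p=\infty$ of $(vi)$, namely $\mathscr{H}_\mu:F^{\infty,q,\alpha}\to F^{\infty,q,\alpha}$, with the inclusion $F^{\infty,q,\alpha}\hookrightarrow F^{p,\infty,\alpha}$ of Proposition \ref{embeddings1}.

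It remains to prove the crucial implication $(iv)\Rightarrow(vi)$. Assume $\mu(0,1)=0$ and $\mu_0=1$; then $\sup_n\sqrt[n]{\mu_n}\le1$, so by Proposition \ref{prop:continuity-entire} the operator is well defined on $H(\C)$ and
\[
\mathscr{H}_\mu f=\int_{[1,\infty)} D_t f\,\tfrac{d\mu(t)}{t},\qquad D_t f(z)=f(z/t),
\]
an average, with respect to the probability measure $\tfrac{d\mu(t)}{t}$, of the dilations $D_t$ with $t\ge1$. The plan is a fibre‑wise reduction to the unit disc. Since $\mathscr{H}_\mu$ commutes with the dilation $f\mapsto f(r\cdot)$, the function $z\mapsto\mathscr{H}_\mu f(rz)$ is obtained from $g_r(z):=f(rz)$ by the Taylor‑coefficient multiplier $(\mu_n)_{n\ge0}$, and $g_r\in H^p(\mathbb D)$ with $\|g_r\|_{H^p(\mathbb D)}=M_p(f,r)$; hence $M_p(\mathscr{H}_\mu f,r)=\|\,\mathscr{H}_\mu g_r\,\|_{H^p(\mathbb D)}$. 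Therefore, once one knows that the operator $T:g\mapsto\sum_n\mu_n\widehat g_n z^n=\int_{[1,\infty)}g(\cdot/t)\tfrac{d\mu(t)}{t}$ is bounded on $H^p(\mathbb D)$ with some norm $C_p$, for each fixed $0<p\le\infty$, one gets $M_p(\mathscr{H}_\mu f,r)\le C_p\,M_p(f,r)$ for all $r>0$, and then, integrating the $q$‑th power against $\alpha q\,e^{-\alpha q r^2/2}r\,dr$ (or taking the weighted supremum when $q=\infty$), $\|\mathscr{H}_\mu f\|_{p,q,\alpha}\le C_p\|f\|_{p,q,\alpha}$ for every $0<q\le\infty$ — which is exactly $(vi)$, uniformly in $\alpha$ and $q$.

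For $1\le p\le\infty$ the boundedness of $T$ on $H^p(\mathbb D)$ is immediate: each $D_t$, $t\ge1$, is a contraction of $H^p(\mathbb D)$ (because $M_p(g,\cdot)$ is increasing, so $M_p(g,\cdot/t)\le M_p(g,\cdot)$), and Minkowski's integral inequality gives $\|Tg\|_{H^p}\le\int_{[1,\infty)}\|D_t g\|_{H^p}\tfrac{d\mu(t)}{t}\le\|g\|_{H^p}$; the very same computation, carried out with $M_p$‑means on circles, gives $(vi)$ directly for all $1\le p\le\infty$ and all $q$. The case $0<p<1$ is the main obstacle: Minkowski fails for the quasi‑norm and an average of contractions of a quasi‑Banach space need not be bounded, so one must exploit the coherence of the family $\{D_t g\}$. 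One way is to invoke the atomic decomposition of $H^p(\mathbb D)$ ($0<p\le1$): if $g=\sum_k c_k a_k$ with $(c_k)\in\ell^p$ and $a_k$ atoms, then since $\|\cdot\|_{H^p}^p$ is subadditive it suffices to prove $\sup_k\|Ta_k\|_{H^p}<\infty$, a direct estimate on the kernel‑type atoms. Equivalently, staying inside the Fock setting, one tests $\mathscr{H}_\mu$ on normalized reproducing kernels: from $\mathscr{H}_\mu k_a(z)=e^{-\alpha|a|^2/2}\int_{[1,\infty)}e^{\alpha z\overline a/t}\tfrac{d\mu(t)}{t}$, splitting the circle $|z|=r$ into $\{\operatorname{Re}(z\overline a)\ge0\}$, where $t\ge1$ gives $|\mathscr{H}_\mu k_a(z)|\le|k_a(z)|$, and its complement, where $|\mathscr{H}_\mu k_a(z)|\le e^{-\alpha|a|^2/2}$, one obtains $M_p(\mathscr{H}_\mu k_a,r)^p\le M_p(k_a,r)^p+\tfrac12 e^{-\alpha p|a|^2/2}$ and hence $\sup_a\|\mathscr{H}_\mu k_a\|_{p,\alpha}\le(3/2)^{1/p}$ (using \eqref{nrk}); together with the atomic decomposition of $F^p_\alpha$ this again gives $(vi)$ for $0<p<1$ (for the mixed spaces with $q<p$ one needs the $H^p(\mathbb D)$ route above, or a tailored decomposition). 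The $0<p<1$ boundedness of $T$ on $H^p(\mathbb D)$ — equivalently, the uniform control of $\mathscr{H}_\mu$ on atoms/kernels — is the technical heart; granting it, the chain $(iv)\Leftrightarrow(v)$, $(iv)\Rightarrow(vi)\Rightarrow(i)\Rightarrow(ii)\Rightarrow(v)\Rightarrow(iv)$ and $(vi)\Rightarrow(iii)\Rightarrow(iv)$ closes, which proves the theorem.
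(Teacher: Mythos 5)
Your overall architecture is sound and, for most of the implications, coincides with the paper's: the necessity directions all come from testing on the eigenvectors $u_n$ (giving $(ii)\Rightarrow(v)$ directly and $(iii)\Rightarrow(iv)$ via the subexponential bound $\mu_n\lesssim n^{1/(2q)}$, exactly as in the paper), the equivalence $(iv)\Leftrightarrow(v)$ is handled correctly (your direct exponential-growth argument for $(v)\Rightarrow(iv)$ is a legitimate variant of the paper's use of Lemma \ref{prop:momentos}), your logical cycle closes, and for $1\le p\le\infty$ your proof of $(iv)\Rightarrow(vi)$ via $M_p(\mathscr{H}_\mu f,r)\le\int_1^\infty M_p(f,r/t)\,\frac{d\mu(t)}{t}\le M_p(f,r)$ is exactly the paper's argument.

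The genuine gap is the case $0<p<1$ of $(iv)\Rightarrow(vi)$, which you explicitly leave open (``granting it''). You correctly identify why Minkowski fails there, but neither of your two sketches closes the statement as written. The $H^p(\mathbb D)$ atomic route is only asserted: you never verify $\sup_k\|Ta_k\|_{H^p}<\infty$ for $H^p(\mathbb D)$ atoms, nor that $T$ commutes with the atomic series, and this is not a routine computation. Your Fock-side alternative is better: the estimate $\sup_a\|\mathscr{H}_\mu k_a\|_{p,\alpha}\le(3/2)^{1/p}$ is correct (the split along $\operatorname{Re}(z\bar a)\ge 0$ works, and $\|k_a\|_{p,\alpha}=1$ by \eqref{nrk}), and combined with the atomic decomposition of $F^p_\alpha$ for $0<p\le1$ and the $p$-subadditivity of $\|\cdot\|_{p,\alpha}^p$ it does yield boundedness of $\mathscr{H}_\mu$ on $F^p_\alpha$; but, as you concede, this gives only the diagonal case $p=q$ and not statement $(vi)$ for $p<1$ and $q\ne p$, which requires the pointwise-in-$r$ inequality $M_p(\mathscr{H}_\mu f,r)\le C_pM_p(f,r)$. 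For comparison, the paper disposes of $0<p<1$ in one line by asserting $M_p^p(\mathscr{H}_\mu f,r)\le\int_1^\infty M_p^p(f,r/t)\,\frac{d\mu(t)}{t}$, i.e.\ a continuous $p$-triangle inequality for the average of dilates; your implicit skepticism about obtaining this from Minkowski-type reasoning is well founded (for a general probability average of functions the inequality $\bigl(\int F\,d\nu\bigr)^p\le\int F^p\,d\nu$ goes the wrong way by Jensen), so this is a point where the paper is terser than you are, but your proposal still does not supply a complete substitute. As it stands, the heart of $(iv)\Rightarrow(vi)$ for $0<p<1$ is assumed rather than proved.
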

\begin{proof} (i) $\Longrightarrow$ (ii) It is obvious.

	(ii) $\Longrightarrow$ (iii) It follows using that $F^{\infty,p,\alpha}\subset F^{p}_\alpha \subset F^{p,\infty,\alpha}$.
	
	(iii) $\Longrightarrow$ (iv)  We will assume that $0 < q < \infty$ since the case $q = \infty$ is easier. We have $\|\mathscr{H}_\mu(u_n)\|_{\infty,\alpha} \lesssim \|u_n\|_{q,\alpha}.$  Hence, using (\ref{normaun}), we obtain
	$$
	\mu_n \lesssim n^{\frac{1}{2q}},$$ that is,
	$$
	\sup_{n\in {\mathbb N}}\frac{1}{n^a}\int_0^\infty \frac{d\mu(t)}{t^{n+1}} \leq C < \infty,$$ where $a = \frac{1}{2q}.$ For every $0 < \delta < 1$ we have
	$$
	\left(\frac{1}{\delta}\right)^{n+1}\frac{\mu(0,\delta)}{n^a} \leq \frac{1}{n^a}\int_0^\delta \frac{d\mu(t)}{t^{n+1}} \leq C\ \ \forall n\in {\mathbb N},$$ from where it follows $\mu(0,\delta) = 0$ for any $\delta<1$ and we get the result.

	(iv) $\Longrightarrow$ (v) Condition (iv) implies that  $\displaystyle\mu_n = \int_1^\infty\frac{d\mu(t)}{t^{n+1}}$. Hence $\mu_n$ is decreasing and $\sup_{n\in \mathbb N_0}\mu_n=\mu_0.$
	
	(v) $\Longrightarrow$ (iv) Take $\phi(t) = \frac{1}{t}.$ Since $d\nu(t) = \frac{d\mu(t)}{t}$ is a probability measure and $\sqrt[n]{\mu_n}= \|\phi\|_{L^n(d\nu)}$ then $\left(\sqrt[n]{\mu_n}\right)_n $ is increasing. Condition (v) gives that $\sup_{n}\sqrt[n]{\mu_n}=\lim_n\sqrt[n]{\mu_n} \leq 1.$ Then Lemma \ref{prop:momentos} implies that $\mu(0,1) = 0.$
	
	(iv) $\Longrightarrow$ (vi) Since $$\mathscr{H}_\mu(f)(z)=\int_1^\infty f(\frac{z}{t})\frac{d\mu(t)}{t}$$ then
 $$M_p(\mathscr{H}_\mu(f),r)\le \int_1^\infty M_p(f,r/t))\frac{d\mu(t)}{t}, \quad 1\le p\le \infty$$
 and
 $$M^p_p(\mathscr{H}_\mu(f),r)\le \int_1^\infty M^p_p(f,r/t))\frac{d\mu(t)}{t}, \quad 0<p<1.$$
 In both cases $M_p(\mathscr{H}_\mu(f), r)\le M_p(f,r), \ r>0.$
	This shows that $\mathscr{H}_\mu(F^{p,q,\alpha})\subset F^{p,q,\alpha}$ with continuous inclusion for all $0<p,q\le \infty$ and $\alpha>0$.

	(vi) $\Longrightarrow$ (i) Just take $p=q$.
\end{proof}

\begin{corollary}\label{hauss1p} Let $\mu$ be a positive measure with $\mu_0=\int_0^\infty \frac{d\mu(t)}{t}=1.$ The following are equivalent:
\begin{itemize}
	\item[(i)] There exist $0< p < q\le \infty$ such that $\mathscr{H}_\mu:F^{p}_{\alpha}\to F^{q}_{\alpha}$ is bounded.
	\item[(ii)] $\mathscr{H}_\mu:F^{p}_{\alpha}\to F^{p}_{\alpha}$ is bounded for all $0< p\le \infty.$
\end{itemize}	
\end{corollary}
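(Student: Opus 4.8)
The plan is to derive the corollary directly from Theorem \ref{mainteo}. The implication (ii) $\Rightarrow$ (i) requires essentially no work: if $\mathscr{H}_\mu: F^p_\alpha \to F^p_\alpha$ is bounded and $0 < p < q \le \infty$, then composing with the continuous inclusion $F^p_\alpha \subset F^q_\alpha$ recorded in Section \ref{sec:pre} shows that $\mathscr{H}_\mu: F^p_\alpha \to F^q_\alpha$ is bounded.

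For (i) $\Rightarrow$ (ii) the idea is to test $\mathscr{H}_\mu$ on the monomials $u_n$ and thereby reduce to condition (iv) of Theorem \ref{mainteo}. Since $\mathscr{H}_\mu(u_n) = \mu_n u_n$, boundedness of $\mathscr{H}_\mu: F^p_\alpha \to F^q_\alpha$ yields $\mu_n \|u_n\|_{q,\alpha} \lesssim \|u_n\|_{p,\alpha}$, that is, $\mu_n \lesssim \|u_n\|_{p,\alpha}/\|u_n\|_{q,\alpha}$. Plugging in the asymptotics (\ref{normaun}), the common factor $\sqrt{n!/\alpha^n}$ cancels and one is left with
$$\mu_n \lesssim n^{a}, \qquad a := \frac{1}{2p} - \frac{1}{2q} > 0,$$
where $\frac{1}{2q}$ is to be read as $0$ when $q = \infty$.

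From here I would simply repeat the computation used in the step (iii) $\Rightarrow$ (iv) of Theorem \ref{mainteo}: for every $0 < \delta < 1$ and every $n$ one has $\left(\frac{1}{\delta}\right)^{n+1} \mu(0,\delta) \le \int_0^\delta \frac{d\mu(t)}{t^{n+1}} \lesssim n^{a}$, so $\mu(0,\delta) \lesssim n^{a} \delta^{n+1} \to 0$ as $n \to \infty$; hence $\mu(0,\delta) = 0$ for every $\delta < 1$ and therefore $\mu(0,1) = 0$. This is condition (iv) of Theorem \ref{mainteo}, equivalent to condition (i) there, i.e. that $\mathscr{H}_\mu: F^p_\alpha \to F^p_\alpha$ be bounded for all $0 < p \le \infty$; in particular (ii) holds. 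There is no real obstacle in this argument, since the corollary is essentially a repackaging of Theorem \ref{mainteo}; the only point deserving a moment's care is the case $q = \infty$ in the monomial estimate, where one uses $\|u_n\|_{\infty,\alpha} \asymp \sqrt{n!/\alpha^n}\, n^{-1/4}$ instead of the $q < \infty$ formula, but the resulting exponent $a = \frac{1}{2p}$ is still strictly positive, so the conclusion is unaffected.
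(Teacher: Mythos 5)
Your proof is correct and is essentially the paper's argument: the paper deduces (i)~$\Rightarrow$~(ii) by noting that $F^{\infty,p,\alpha}\subset F^p_\alpha\subset F^q_\alpha\subset F^{q,\infty,\alpha}$ puts you in case (iii) of Theorem~\ref{mainteo}, whose proof is exactly the monomial test and moment estimate you carry out inline. Re-deriving $\mu_n\lesssim n^{a}$ with $a=\frac{1}{2p}-\frac{1}{2q}$ and concluding $\mu(0,1)=0$ is a valid (and slightly more self-contained) way of reaching the same condition (iv), and your handling of the case $q=\infty$ is fine.
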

\begin{proof} (i) $\Longrightarrow$ (ii). Since $F^{\infty, p, \alpha} \subset F^p_\alpha \subset F^q_\alpha \subset F^{q, \infty, \alpha}$ with continuous inclusions then condition (iii) in Theorem \ref{mainteo} is satisfied.
	
	(ii) $\Longrightarrow$ (i). It follows trivially since $F^{p}_{\alpha}\subset F^{q}_{\alpha}$ for $p<q$.
\end{proof}

\section{Boundedness from large to small spaces}\label{sec:large-small}

We look for necessary or sufficient conditions for a Hausdorff operator to transform a Fock space into a smaller one. Sometimes we will write $\mathscr{H}_\mu(F^q_\alpha)\subset F^p_\alpha$ to mean that $\mathscr{H}_\mu:F^q_\alpha\to F^p_\alpha$ is a well defined and bounded operator.

We first observe that due to duality one actually has the following equivalence.
\begin{proposition} Let $1\le p <q \le \infty$. Then $\mathscr{H}_\mu:F^q_\alpha\to F^p_\alpha$ is bounded if and only if
	$\mathscr{H}_\mu:F^{p'}_\alpha\to F^{q'}_\alpha$ is bounded where $1/p+1/p'=1/q+1/q'=1.$
\end{proposition}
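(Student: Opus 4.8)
The plan is to use duality together with the elementary observation that $\mathscr{H}_\mu$ acts diagonally on monomials: directly from the definition, $\mathscr{H}_\mu(u_n)=\mu_n u_n$ for every $n$, with each $\mu_n$ a nonnegative real number. Combined with the coefficient expression $\langle f,g\rangle_\alpha=\sum_n a_n\overline{b_n}\,\tfrac{n!}{\alpha^n}$ for the pairing, this says that on polynomials $\langle \mathscr{H}_\mu f,g\rangle_\alpha=\langle f,\mathscr{H}_\mu g\rangle_\alpha$, i.e.\ $\mathscr{H}_\mu$ is its own formal adjoint for $\langle\cdot,\cdot\rangle_\alpha$. The statement should then come out of the identifications $(F^s_\alpha)^\ast=F^{s'}_\alpha$ for $1\le s<\infty$ and $(f^\infty_\alpha)^\ast=F^1_\alpha$ recalled in Section~\ref{sec:pre}, by passing to Banach-space adjoints.

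The core step I would isolate is the one-sided implication: \emph{if $1\le a<\infty$, $1\le b\le\infty$, and $\mathscr{H}_\mu:F^b_\alpha\to F^a_\alpha$ is bounded, then $\mathscr{H}_\mu:F^{a'}_\alpha\to F^{b'}_\alpha$ is bounded.} Since $a<\infty$ we may identify $(F^a_\alpha)^\ast=F^{a'}_\alpha$. Suppose first $b<\infty$. Then the Banach-space adjoint of $\mathscr{H}_\mu:F^b_\alpha\to F^a_\alpha$ is a bounded operator $T:(F^a_\alpha)^\ast\to(F^b_\alpha)^\ast$, that is, $T:F^{a'}_\alpha\to F^{b'}_\alpha$. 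Plugging $f=u_n$ into the defining relation $\langle \mathscr{H}_\mu f,g\rangle_\alpha=\langle f,Tg\rangle_\alpha$ and using $\mathscr{H}_\mu u_n=\mu_n u_n$ together with the coefficient expression of $\langle\cdot,\cdot\rangle_\alpha$, one sees that $Tg$ has the same Taylor coefficients as the entire function $\mathscr{H}_\mu g$; hence $\mathscr{H}_\mu g=Tg\in F^{b'}_\alpha$, and $\mathscr{H}_\mu:F^{a'}_\alpha\to F^{b'}_\alpha$ is bounded. If instead $b=\infty$, I would run the same argument after restricting $\mathscr{H}_\mu$ to the closed subspace $f^\infty_\alpha\subset F^\infty_\alpha$ (restricting a bounded operator keeps it bounded, and each $u_n$ lies in $f^\infty_\alpha$), using now $(f^\infty_\alpha)^\ast=F^1_\alpha=F^{b'}_\alpha$.

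Granting this step, the proposition is immediate. Applying it with $(a,b)=(p,q)$ — admissible since $p<q\le\infty$ gives $p<\infty$ — yields ``$\mathscr{H}_\mu:F^q_\alpha\to F^p_\alpha$ bounded $\Rightarrow$ $\mathscr{H}_\mu:F^{p'}_\alpha\to F^{q'}_\alpha$ bounded''. Applying it with $(a,b)=(q',p')$ — admissible since $q>p\ge 1$ forces $q'<\infty$ — and using $s''=s$ yields the reverse implication ``$\mathscr{H}_\mu:F^{p'}_\alpha\to F^{q'}_\alpha$ bounded $\Rightarrow$ $\mathscr{H}_\mu:F^q_\alpha\to F^p_\alpha$ bounded''.

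I do not anticipate a genuine difficulty; the care needed is purely in the bookkeeping. The point most easily mishandled is the treatment of $F^\infty_\alpha$: since $(F^\infty_\alpha)^\ast\neq F^1_\alpha$, whenever an infinite exponent occurs (so $p=1$, forcing $p'=\infty$, or $q=\infty$, forcing $q'=1$) the corresponding $F^\infty_\alpha$ must be accessed through the predual pairing $(f^\infty_\alpha)^\ast=F^1_\alpha$, exactly as in the case $b=\infty$ above. Well-definedness of $\mathscr{H}_\mu$ on the spaces involved is not an issue: $\mu_n<\infty$ for all $n$ makes $\mathscr{H}_\mu$ send polynomials to polynomials, and the adjoint identification then exhibits $\mathscr{H}_\mu g$ as an honest element of the target Fock space.
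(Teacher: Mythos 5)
Your proof is correct and is essentially the paper's own argument: both rest on the formal self-adjointness of $\mathscr{H}_\mu$ under $\langle\cdot,\cdot\rangle_\alpha$ (via $\mathscr{H}_\mu u_n=\mu_n u_n$), the dualities $(F^s_\alpha)^\ast=F^{s'}_\alpha$ and $(f^\infty_\alpha)^\ast=F^1_\alpha$, and routing through the predual $f^\infty_\alpha$ whenever an infinite exponent occurs. Your packaging of this as a single one-sided lemma applied twice is only an organizational difference from the paper, which instead splits into the cases $1<p<q<\infty$, $p=1$, and $q=\infty$ and additionally cites Theorem \ref{mainteo} to justify that $\mathscr{H}_\mu$ acts diagonally on Taylor coefficients of arbitrary functions in the domain space.
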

\begin{proof} Observe that if $\mathscr{H}_\mu:F^q_\alpha\to F^p_\alpha$ is bounded then $\mathscr{H}_\mu:F^q_\alpha\to F^q_\alpha$ is bounded and therefore, due to Theorem \ref{mainteo}, $\mathscr{H}_\mu(f)(z)=\sum_{n=0}^\infty \mu_n a_n z^n$ for $f\in F^q_\alpha$ with $f(z)=\sum_{n=0}^\infty a_n z^n$ where $\mu_n$ is a decreasing sequence of non-negative numbers.
	If $1< p<q<\infty$, due to the duality $(F^p_\alpha)^*=F^{p'}_\alpha$ one easily sees that   $\mathscr{H}_\mu:F^q_\alpha\to F^p_\alpha$ is bounded if and only if $\mathscr{H}_\mu^*=\mathscr{H}_\mu: F^{p'}_\alpha\to F^{q'}_\alpha$ is.

	In the cases $p=1$ and $q<\infty$ or  $p>1$ and $q=\infty$ we can use that $(F^1_\alpha)^*=F^{\infty}_\alpha$ and  $(f^\infty_\alpha)^*=F^{1}_\alpha$  and the fact that  if $\mathscr{H}_\mu:F^\infty_\alpha\to F^{t}_\alpha$ is continuous then so is $\mathscr{H}_\mu:F^{t'}_\alpha\to F^1_\alpha$ because it is the transposed map of the bounded operator $\mathscr{H}_\mu:f^\infty_\alpha\to F^{t}_\alpha$.
	
\end{proof}
We start with a result of self-improvement which follows from Kintchine's inequalities. Let $r_n(s)=sign(\sin(2^n \pi s)), s\in [0,1]$ be the Rademacher system. For each $f=\sum_{n=0}^\infty a_n u_n$ we denote $$\displaystyle R_sf=\sum_{n=0}^\infty a_n r_n(s) u_n.$$ Clearly $\|R_sf\|_{2,q,\alpha}= \|f\|_{2,q,\alpha}$ for any $0<q\le\infty$ and $0\le s\le 1.$

\begin{lem}\label{kint} Let $0<q<\infty$ and $f=\sum_{n=0}^\infty a_n u_n.$ Then
	$$\|f\|_{2,q, \alpha}\asymp \left(\int_0^1 \|R_sf\|^q_{q,\alpha}ds\right)^{1/q}.$$
\end{lem}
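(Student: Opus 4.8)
The plan is to compare the two quantities radius by radius. For a fixed $r > 0$, consider the entire function $g_r(w) := \sum_{n=0}^\infty a_n r^n w^n$ viewed as a function on the unit circle $w = e^{i\theta}$; then $M_2(f,r)^2 = \frac{1}{2\pi}\int_0^{2\pi}|g_r(e^{i\theta})|^2\,d\theta = \sum_n |a_n|^2 r^{2n}$, while for each Rademacher parameter $s$ one has $M_q(R_sf,r)^q = \frac{1}{2\pi}\int_0^{2\pi}\big|\sum_n a_n r_n(s) r^n e^{in\theta}\big|^q\,d\theta$. The classical Khintchine inequality applied to the random signs $r_n(s)$ (with $\theta$ frozen) gives, after integrating in $s$ and using Fubini,
\[
\int_0^1 M_q(R_sf,r)^q\,ds \;=\; \frac{1}{2\pi}\int_0^{2\pi}\!\int_0^1\Big|\sum_{n=0}^\infty a_n r^n e^{in\theta} r_n(s)\Big|^q ds\,d\theta \;\asymp\; \frac{1}{2\pi}\int_0^{2\pi}\Big(\sum_{n=0}^\infty |a_n|^2 r^{2n}\Big)^{q/2} d\theta \;=\; M_2(f,r)^q,
\]
with constants depending only on $q$. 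This is the key identity: it says $\int_0^1 M_q(R_sf,r)^q\,ds \asymp M_p(f,r)^q$ with $p=2$, uniformly in $r$.

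Next I would integrate this against the Fock weight $\alpha q\, e^{-\alpha q r^2/2} r\,dr$ over $(0,\infty)$ and apply Tonelli to interchange the $s$-integral with the $r$-integral on the right-hand side, recognizing $\alpha q\int_0^\infty M_q(R_sf,r)^q e^{-\alpha q r^2/2} r\,dr = \|R_sf\|_{q,\alpha}^q$ by \eqref{eq:norm-fock}, and $\alpha q\int_0^\infty M_2(f,r)^q e^{-\alpha q r^2/2} r\,dr = \|f\|_{2,q,\alpha}^q$ by Definition \ref{Z2}. This yields
\[
\int_0^1 \|R_sf\|_{q,\alpha}^q\,ds \;\asymp\; \|f\|_{2,q,\alpha}^q,
\]
and taking $q$-th roots gives the claim. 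One should also note the minor preliminary point that this chain of equalities is justified whenever either side is finite: the Khintchine equivalence at each level $r$ shows the finiteness of one side forces finiteness of the other, so no separate convergence argument is needed.

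The main obstacle is the careful bookkeeping of the interchange of integrals and the verification that Khintchine's inequality may be applied pointwise in $\theta$ to the (in general infinite) series $\sum_n a_n r^n e^{in\theta} r_n(s)$; for fixed $r$ the coefficients $(a_n r^n e^{in\theta})_n$ lie in $\ell^2$ (since $f$ is entire, indeed the series converges absolutely for every $r$), so the two-sided Khintchine bound for $\ell^2$-sequences applies, but one must phrase it for series rather than finite sums — a routine truncation-and-limit argument, using monotone convergence for the lower bound and the uniform Khintchine constant for the upper bound. Everything else is an application of Fubini--Tonelli with nonnegative integrands, which is unconditional.
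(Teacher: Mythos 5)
Your proposal is correct and follows essentially the same route as the paper: interchange the $s$- and $r$-integrals by Tonelli and apply Khintchine's inequality pointwise in $\theta$ (with coefficients $a_n r^n e^{in\theta}$, whose $\ell^2$-norm is $M_2(f,r)$) to get $\int_0^1 M_q^q(R_sf,r)\,ds \asymp M_2(f,r)^q$ uniformly in $r$. The paper's proof is exactly this computation, stated more tersely; your extra remarks on truncation and finiteness are fine but not points the paper dwells on.
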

\begin{proof}   Due to Kintchine's inequalities we have
	$$
	\begin{array}{*2{>{\displaystyle}l}}
		\int_0^1\|R_sf\|^q_{q,\alpha}\ ds & = \alpha q\int_0^\infty \left(\int_0^1 M_q^q(R_sf,r)ds\right)e^{-\frac{\alpha q}{2}r^2} r dr \\ & \\
		& \asymp  \int_0^\infty M_2(f,r)^q e^{-\frac{\alpha q}{2} r^2} r dr\\ & \\
		& \asymp  \|f\|^q_{2,q,\alpha}.
	\end{array}$$
\end{proof}

\begin{theorem} \label{equiv} Let  $0<q\le \infty$.
	
	(i) Case $0<p<2$:
	$\mathscr{H}_\mu:F^{2,q,\alpha}\to F^{p}_\alpha$ is bounded if and only if $\mathscr{H}_\mu:F^{2,q,\alpha}\to F^{2,p,\alpha}$ is bounded.
	
	(ii) Case $2<p<\infty$:
	$\mathscr{H}_\mu: F^{p}_\alpha\to F^{2,q,\alpha}$ is bounded if and only if $\mathscr{H}_\mu:F^{2,p,\alpha}\to F^{2,q,\alpha}$ is bounded.

	In particular $$\mathscr{H}_\mu(F^{2}_{\alpha})\subset F^{1}_\alpha \Longleftrightarrow \mathscr{H}_\mu(F^{2}_{\alpha})\subset F^{2,1,\alpha} \hbox{ or }
	\mathscr{H}_\mu(F^{4}_{\alpha})\subset F^{2}_\alpha \Longleftrightarrow \mathscr{H}_\mu(F^{2,4,\alpha})\subset F^{2}_{\alpha}.$$
	
\end{theorem}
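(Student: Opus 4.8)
The plan is to exploit the fact that $\mathscr{H}_\mu$ is a Taylor--coefficient multiplier and hence commutes with the Rademacher averaging operators $R_s$, and to combine this with the Khinchin-type norm equivalence of Lemma~\ref{kint} together with the observation, recorded just before that lemma, that each $R_s$ acts isometrically on every mixed space $F^{2,q,\alpha}$. As a first step I would note that in each of the four boundedness statements one may test $\mathscr{H}_\mu$ on the monomials $u_n$: using $\|u_n\|_{2,q,\alpha}=\|u_n\|_{q,\alpha}$ from~(\ref{eq:mixednorm-monomial}) and the asymptotics in~(\ref{normaun}) one finds that $\mu_n$ grows at most polynomially in $n$, so $\sup_n\sqrt[n]{\mu_n}<\infty$ and Proposition~\ref{prop:continuity-entire} applies; thus $\mathscr{H}_\mu$ is a well-defined continuous endomorphism of $H({\mathbb C})$ acting by $\mathscr{H}_\mu\big(\sum a_n u_n\big)=\sum \mu_n a_n u_n$. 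Since $R_s$ is likewise a diagonal multiplier, $\mathscr{H}_\mu R_s=R_s\mathscr{H}_\mu$ on $H({\mathbb C})$, with $R_s^2=\mathrm{id}$ and $\|R_sh\|_{2,q,\alpha}=\|h\|_{2,q,\alpha}$ for every entire $h$ and every $0<q\le\infty$. The ``trivial'' implications follow from Proposition~\ref{embeddings1}(i): for $0<p<2$ one has $F^{2,p,\alpha}\subseteq F^p_\alpha$ with $\|\cdot\|_{p,\alpha}\le\|\cdot\|_{2,p,\alpha}$, and for $2<p<\infty$ one has $F^p_\alpha\subseteq F^{2,p,\alpha}$ with $\|\cdot\|_{2,p,\alpha}\le\|\cdot\|_{p,\alpha}$.

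For the self-improvement in (i), suppose $\|\mathscr{H}_\mu f\|_{p,\alpha}\le C\|f\|_{2,q,\alpha}$ for all $f\in F^{2,q,\alpha}$, with $0<p<2$. Applying this to $R_sf$ (which has the same $F^{2,q,\alpha}$-norm) and using $\mathscr{H}_\mu R_s=R_s\mathscr{H}_\mu$ gives $\|R_s(\mathscr{H}_\mu f)\|_{p,\alpha}\le C\|f\|_{2,q,\alpha}$ for all $s\in[0,1]$. Raising to the $p$-th power, integrating over $s\in[0,1]$, and applying Lemma~\ref{kint} to the entire function $\mathscr{H}_\mu f$ with the finite exponent $p$ yields $\|\mathscr{H}_\mu f\|_{2,p,\alpha}\asymp\big(\int_0^1\|R_s(\mathscr{H}_\mu f)\|_{p,\alpha}^p\,ds\big)^{1/p}\lesssim\|f\|_{2,q,\alpha}$, so $\mathscr{H}_\mu f\in F^{2,p,\alpha}$ and $\mathscr{H}_\mu:F^{2,q,\alpha}\to F^{2,p,\alpha}$ is bounded.

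For the self-improvement in (ii), suppose $\|\mathscr{H}_\mu f\|_{2,q,\alpha}\le C\|f\|_{p,\alpha}$ for all $f\in F^p_\alpha$, with $2<p<\infty$, and let $g\in F^{2,p,\alpha}$. By Lemma~\ref{kint} (finite exponent $p$) one has $\int_0^1\|R_sg\|_{p,\alpha}^p\,ds\asymp\|g\|_{2,p,\alpha}^p<\infty$, hence $R_sg\in F^p_\alpha$ for a.e.\ $s$. For such $s$, the hypothesis together with $\mathscr{H}_\mu(R_sg)=R_s(\mathscr{H}_\mu g)$ gives $\|R_s(\mathscr{H}_\mu g)\|_{2,q,\alpha}\le C\|R_sg\|_{p,\alpha}$, and applying the isometric involution $R_s$ one obtains $\mathscr{H}_\mu g\in F^{2,q,\alpha}$ with $\|\mathscr{H}_\mu g\|_{2,q,\alpha}\le C\|R_sg\|_{p,\alpha}$ for a.e.\ $s$. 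Raising to the $p$-th power and integrating in $s$ gives $\|\mathscr{H}_\mu g\|_{2,q,\alpha}^p\le C^p\int_0^1\|R_sg\|_{p,\alpha}^p\,ds\asymp C^p\|g\|_{2,p,\alpha}^p$, i.e.\ $\mathscr{H}_\mu:F^{2,p,\alpha}\to F^{2,q,\alpha}$ is bounded. The two displayed equivalences are then the instances $q=2$, $p=1$ of (i) and $q=2$, $p=4$ of (ii), after recalling $F^{2,2,\alpha}=F^2_\alpha$.

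Once Lemma~\ref{kint} is available the whole argument is quite short. The points that need genuine care are establishing the diagonal-multiplier form of $\mathscr{H}_\mu$ (so that the commutation $\mathscr{H}_\mu R_s=R_s\mathscr{H}_\mu$ is legitimate on the spaces involved) and, in case (ii), the a.e.\ finiteness of $s\mapsto\|R_sg\|_{p,\alpha}$, which is exactly what lets us feed $R_sg$ into the hypothesis about boundedness on $F^p_\alpha$; I expect this measurability/integrability bookkeeping to be the only mildly delicate part.
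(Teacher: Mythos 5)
Your proposal is correct and follows essentially the same route as the paper: both directions are the trivial inclusions from Proposition \ref{embeddings1} plus a self-improvement via the commutation $\mathscr{H}_\mu R_s=R_s\mathscr{H}_\mu$, the isometry of $R_s$ on $F^{2,q,\alpha}$, and the Khinchine equivalence of Lemma \ref{kint}. Your extra bookkeeping (testing on monomials to justify the diagonal-multiplier form, and the a.e.\ finiteness of $s\mapsto\|R_sg\|_{p,\alpha}$ in case (ii)) only makes explicit what the paper leaves implicit.
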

\begin{proof} (i) Since $p<2$ then $F^{2,p,\alpha}\subset F^p_\alpha$ and only the direct implication needs a proof. Assume that $\mathscr{H}_\mu(F^{2,q,\alpha})\subset F^{p}_\alpha$. Let $f\in F^{2,q,\alpha}$ be given and note that $R_sf\in F^{2,q,\alpha}$ with $\|R_s f\|_{2,q,\alpha}=\|f\|_{2,q,\alpha}$ for all $0\le s\le 1.$
	Hence from Lemma \ref{kint} and the fact $\mathscr{H}_\mu(R_s f)(z) = R_s(\mathscr{H}_\mu f)(z)$, we obtain
	\begin{eqnarray*}
		\|\mathscr{H}_\mu(f)\|^p_{2,p,\alpha}&\asymp& \int_0^1 \|R_s(\mathscr{H}_\mu f)\|_{p,\alpha}^p\ ds\\
		&=& \int_0^1 \|\mathscr{H}_\mu (R_sf)\|_{p,\alpha}^p\ ds\\
		&\le & C\int_0^1 \|R_sf\|_{2,q,\alpha}^p\ ds = C \|f\|^p_{2,q,\alpha}.
	\end{eqnarray*}
	
	(ii) Since $2<p<\infty$ then now $F^p_\alpha\subset F^{2,p,\alpha}$ again only the direct implication needs a proof. Assume that $\mathscr{H}_\mu(F^{p}_{\alpha})\subset F^{2,q,\alpha}$ and let $f\in F^{2,p,\alpha}$. Then  using Lemma \ref{kint} we have that $R_sf\in F^p_\alpha$\ a.e. $s\in (0,1).$

Hence $\|\mathscr{H}_\mu(R_s f)\|_{2,q,\alpha}\leq C\|R_s f\|_{p,\alpha}$\ a.e. $s\in (0,1)$ and we obtain
	\begin{eqnarray*}
		\|\mathscr{H}_\mu(f)\|^p_{2,q,\alpha}&=& \int_0^1 \|R_s(\mathscr{H}_\mu f)\|_{2,q,\alpha}^p\ ds\\
		&=& \int_0^1 \|\mathscr{H}_\mu (R_sf)\|_{2,q,\alpha}^p\ ds\\
		&\le & C\int_0^1 \|R_sf\|_{p,\alpha}^p\ ds\le C \|f\|^p_{2,p,\alpha}.
	\end{eqnarray*}
\end{proof}	

Let us discuss the inclusion $\mathscr{H}_\mu(F^q_\alpha)\subset F^p_\alpha$ for $p<q.$ We first remark some trivial necessary and sufficient conditions for such embedding to hold.

\begin{rmk}\label{rmk:5.1} Let $1\le p<q\le\infty$. Then
 $$\sum_{n=0}^\infty \mu_n (n+1)^{\frac{1}{2}(\frac{1}{p}-\frac{1}{q})}<\infty \Longrightarrow \mathscr{H}_\mu(F^{q}_{\alpha})\subset F^{p}_\alpha  \Longrightarrow \sup_n\mu_n (n+1)^{\frac{1}{2}(\frac{1}{p}-\frac{1}{q})}<\infty .$$
\end{rmk}
Indeed, note that if $f\in F^{q}_{\alpha}$ with $f=\sum_{n=0}^\infty a_n u_n$ then
$|a_n|r^{n}\le M_q(f,r)$ for all $n\in \mathbb N_0$ and $0<r<1$. Therefore
\begin{equation} \label{esti0}
|a_n|\|u_n\|_{q,\alpha}\le \|f\|_{q,\alpha}, \quad n\in \mathbb N_0.
\end{equation}
Now, using (\ref{esti0}) and (\ref{normaun})
\begin{eqnarray*}
\|\mathscr{H}_\mu(f)\|_{p,\alpha}&\le& \sum_{n=0}^\infty \mu_n |a_n| \| u_n\|_{p,\alpha}\\
&\le& C\sum_{n=0}^\infty \mu_n (n+1)^{\frac{1}{2}(\frac{1}{p}-\frac{1}{q})}|a_n| \| u_n\|_{q,\alpha}\\
&\le& C\Big(\sum_{n=0}^\infty \mu_n (n+1)^{\frac{1}{2}(\frac{1}{p}-\frac{1}{q})}\Big)\|f\|_{q,\alpha}.
\end{eqnarray*}
The other implication follows from (\ref{normaun}) since $\|\mathscr{H}_\mu(u_n)\|_{p,\alpha}\le C \|u_n\|_{q,\alpha}$.\qed

Another trivial estimates in the case $q=2$ are given in the following observation.
\begin{rmk}\label{rmk:5.2} Let $0< p\le 2$ and set $\gamma=\min\{p,1\}$. Then
 $$\displaystyle\sum_{n=0}^\infty\mu^{\frac{2\gamma}{2-\gamma}}_n (n+1)^{\frac{\gamma(2-p)}{2p(2-\gamma)}}<\infty \Longrightarrow \mathscr{H}_\mu(F^{2}_{\alpha})\subset F^p_\alpha \Longrightarrow  \sup_{n} \mu_n (n+1)^{\frac{(2-p)}{4p}}<\infty.$$
\end{rmk}
Indeed, for $\displaystyle f= \sum_{n=0}^\infty  a_n u_n$, and writing $\frac{1}{\gamma}=\frac{1}{2}+\frac{1}{u}$, or equivalently $u=\frac{2\gamma}{2-\gamma}$, since $\| u_n \|^\gamma_{p,\alpha}\asymp \| u_n \|_{2,\alpha} (n+1)^{\frac{1}{2p}-\frac{1}{4}}$ we have

\begin{eqnarray*}
	\|\mathscr{H}_\mu(f)\|^\gamma_{p,\alpha} & \le& \sum_{n=0}^\infty  \mu^\gamma_n |a_n|^\gamma \| u_n \|^\gamma_{p,\alpha} \\
&\asymp& \sum_{n=0}^\infty  \mu^\gamma_n |a_n|^\gamma \| u_n \|^\gamma_{2,\alpha}(n+1)^{(\frac{1}{2p}-\frac{1}{4})\gamma}\\
	& \le& \left(\sum_{n=0}^\infty \mu_n^u (n+1)^{u(\frac{1}{2p}-\frac{1}{4})}\right)^{\gamma/u} \left(\sum_{n=0}^\infty |a_n|^2 \| u_n \|^2_{2,\alpha}\right)^{\frac{\gamma}{2}}\\ & \\
	& \lesssim& \|f\|^\gamma_{2,\alpha}.
\end{eqnarray*}
The other implication follows as above evaluating on $u_n$. \qed

Let us analyze the extreme case $\mathscr{H}_\mu(F^{\infty}_{\alpha})\subset F^{1}_\alpha$ in the setting of Banach spaces. We shall get better necessary and sufficient conditions than the ones in the above remarks.
\begin{theorem} \label{case1} Let $\mu$ be a Borel positive measure defined on $(0,\infty)$. Then
$$\sum_{n=0}^\infty \mu_n<\infty \Longrightarrow \mathscr{H}_\mu(F^{\infty}_{\alpha})\subset F^{1}_\alpha  \Longrightarrow \sum_n\mu_n (n+1)^{-\frac{1}{2}}<\infty .$$
\end{theorem}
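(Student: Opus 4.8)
The plan is to establish the two implications separately. For the first implication, suppose $\sum_{n=0}^\infty \mu_n < \infty$. By Theorem \ref{mainteo} (or a direct computation using the formula $\mathscr{H}_\mu(u_n) = \mu_n u_n$), we have $\mathscr{H}_\mu(f) = \sum_{n=0}^\infty \mu_n a_n u_n$ for $f = \sum_{n=0}^\infty a_n u_n \in F^\infty_\alpha$. The idea is to bound $\|\mathscr{H}_\mu(f)\|_{1,\alpha}$ by $\sum_n \mu_n |a_n| \|u_n\|_{1,\alpha}$ and then use the estimate \eqref{esti0} (with $q=\infty$) together with the growth rates in \eqref{normaun}, namely $\|u_n\|_{1,\alpha} \asymp \sqrt{n!/\alpha^n}\, n^{1/4}$ and $\|u_n\|_{\infty,\alpha} \asymp \sqrt{n!/\alpha^n}\, n^{-1/4}$. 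This gives $|a_n| \|u_n\|_{1,\alpha} \lesssim (n+1)^{1/2} \|f\|_{\infty,\alpha}$, which is far too lossy: it would only yield $\sum_n \mu_n (n+1)^{1/2} < \infty$ as a sufficient condition, not $\sum_n \mu_n < \infty$. So the crude triangle-inequality bound is \emph{not} the right route. Instead, I expect one must exploit cancellation: write $\mathscr{H}_\mu = \sum_n (\mu_n - \mu_{n+1}) S_{n}$ where $S_n$ is (a multiple of) the $n$-th partial-sum / Ces\`aro-type projection, using that $\mu_n$ is decreasing when $\mathscr{H}_\mu$ is bounded, OR — more promisingly — represent $\mathscr{H}_\mu$ directly through the measure: since $\mathscr{H}_\mu(f)(z) = \int_0^\infty f(z/t)\, d\mu(t)/t$ and each dilation $f \mapsto f(\cdot/t)$ maps $F^\infty_\alpha$ into $F^1_\alpha$ for $t > 1$ with a controlled norm. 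The key computation will be $\|f(\cdot/t)\|_{1,\alpha}$ for $f \in F^\infty_\alpha$: from $|f(w)| \le \|f\|_{\infty,\alpha} e^{\frac{\alpha}{2}|w|^2}$ we get $|f(z/t)| e^{-\frac{\alpha}{2}|z|^2} \le \|f\|_{\infty,\alpha} e^{-\frac{\alpha}{2}(1 - 1/t^2)|z|^2}$, and integrating $e^{-\frac{\alpha}{2}(1-1/t^2)|z|^2}$ over $\mathbb{C}$ gives something like $\frac{C}{1 - 1/t^2} = \frac{Ct^2}{t^2-1}$. Thus $\|\mathscr{H}_\mu(f)\|_{1,\alpha} \lesssim \|f\|_{\infty,\alpha} \int_0^\infty \frac{t^2}{t^2-1}\cdot\frac{d\mu(t)}{t}$, and the integrability of this against $d\mu(t)/t$ should be shown equivalent (or at least implied) by $\sum_n \mu_n < \infty$ via $\frac{t}{t^2-1} \asymp \sum_{n} t^{-(2n+1)}$ for $t > 1$, so that $\int_1^\infty \frac{t}{t^2-1}\,d\mu(t) \asymp \sum_n \mu_{2n+1}$, which is comparable to $\sum_n \mu_n$ since $(\mu_n)$ is decreasing. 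The main obstacle is precisely handling the behavior near $t = 1$: one needs $\mu$ to put no mass near $1$, but of course $\sum_n \mu_n < \infty$ already forces $\mu(0,1] = 0$ by Theorem \ref{mainteo}, and in fact forces the integral to converge; the delicate point is making the $\asymp$ comparison rigorous and checking the endpoint.

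For the second (necessary) implication, suppose $\mathscr{H}_\mu(F^\infty_\alpha) \subset F^1_\alpha$, so by the closed graph theorem it is bounded. The natural test functions are the normalized reproducing kernels $k_a = K_\alpha(\cdot, a)/\|K_\alpha(\cdot,a)\|_{\infty,\alpha}$, but a cleaner choice exploiting the diagonal action of $\mathscr{H}_\mu$ is to test on finite sums with random signs. Take $f_N = \sum_{n=0}^N \varepsilon_n \frac{u_n}{\|u_n\|_{\infty,\alpha}}$; one shows $\|f_N\|_{\infty,\alpha} \lesssim 1$ for a suitable choice of signs (or for ``most'' sign choices, via a probabilistic/Kahane-type argument, or even deterministically using lacunarity properties of the monomial basis in $F^\infty_\alpha$ — this is where a result like the one behind Remark \ref{rmk:5.1} or a Salem–Zygmund type estimate enters). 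Then $\|\mathscr{H}_\mu(f_N)\|_{1,\alpha} = \|\sum_{n=0}^N \varepsilon_n \mu_n \frac{u_n}{\|u_n\|_{\infty,\alpha}}\|_{1,\alpha}$, and by Khinchine (averaging over the $\varepsilon_n$, exactly as in Lemma \ref{kint}) the average of the $F^1_\alpha$ norm is comparable to $\|\sum_{n=0}^N \mu_n^2 \frac{u_n^2 \cdots}{\cdots}\|^{1/2}$ — more precisely, $\int_0^1 \|R_s g\|_{1,\alpha}\, ds \asymp \|g\|_{2,1,\alpha}$ where $g = \sum_{n=0}^N \mu_n \frac{u_n}{\|u_n\|_{\infty,\alpha}}$, and $\|g\|_{2,1,\alpha}^2 \asymp \sum_{n=0}^N \mu_n^2 \frac{\|u_n\|_{2,\alpha}^2}{\|u_n\|_{\infty,\alpha}^2} \asymp \sum_{n=0}^N \mu_n^2 (n+1)^{1/2}$ by \eqref{normaun}. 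Hence boundedness gives $\sum_{n=0}^\infty \mu_n^2 (n+1)^{1/2} < \infty$. This is not literally $\sum_n \mu_n (n+1)^{-1/2} < \infty$, so the final move is to pass from an $\ell^2$-type condition to an $\ell^1$-type condition using monotonicity of $(\mu_n)$: if $(\mu_n)$ is decreasing and $\sum_n \mu_n^2 (n+1)^{1/2} < \infty$, then grouping dyadically, $\sum_{n \sim 2^k} \mu_n (n+1)^{-1/2} \asymp 2^{k/2} \mu_{2^k} \lesssim (\sum_{n\sim 2^k} \mu_n^2 2^{k/2})^{1/2} \cdot (\text{const})$ — one checks $2^{k/2}\mu_{2^k} \le \big(2^{k} \cdot 2^{k/2}\mu_{2^k}^2\big)^{1/2} \cdot 2^{-k/4}$, hmm, let me instead just note $2^{k/2}\mu_{2^k} \lesssim \big(\sum_{2^{k-1}\le n<2^k}\mu_n^2 (n+1)^{1/2}\big)^{1/2}$ and sum in $k$ using Cauchy–Schwarz in $k$ against a geometric series; this yields $\sum_n \mu_n(n+1)^{-1/2} < \infty$.

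I expect the \textbf{main obstacle} to be the first, sufficiency, direction — specifically, proving the sharp dilation estimate $\|f(\cdot/t)\|_{1,\alpha} \lesssim \frac{t^2}{t^2-1}\|f\|_{\infty,\alpha}$ and then carrying out the comparison $\int_1^\infty \frac{t}{t^2-1}\, d\mu(t) \lesssim \sum_n \mu_n$ cleanly, since both the singularity at $t=1$ and the relation between the continuous moment integral and the discrete sequence $(\mu_n)$ require care (the latter is essentially a Laplace/Mellin-type identity combined with the decreasing rearrangement). The necessity direction is more routine given the Khinchine machinery already set up in Lemma \ref{kint} and the norm asymptotics \eqref{normaun}; its only subtle point is producing the uniformly $F^\infty_\alpha$-bounded random polynomials, for which I would invoke a standard Salem–Zygmund / Kahane estimate (or cite the relevant fact about random Taylor series in Fock spaces).
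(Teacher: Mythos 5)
Your first implication is, despite your own hedging, essentially the paper's argument and it does go through. From $\sup_n\mu_n<\infty$ and Theorem \ref{mainteo} one gets $\mu(0,1)=0$, hence $\mathscr{H}_\mu(f)(z)=\int_1^\infty f(z/t)\,\frac{d\mu(t)}{t}$; the pointwise bound $|f(w)|\le\|f\|_{\infty,\alpha}e^{\frac{\alpha}{2}|w|^2}$ and Fubini give $\|\mathscr{H}_\mu(f)\|_{1,\alpha}\lesssim\|f\|_{\infty,\alpha}\int_1^\infty(1-t^{-2})^{-1}\frac{d\mu(t)}{t}$, and there is nothing delicate at $t=1$: by Tonelli,
$$\int_1^\infty\frac{1}{1-t^{-2}}\,\frac{d\mu(t)}{t}=\sum_{n=0}^\infty\int_1^\infty\frac{d\mu(t)}{t^{2n+1}}=\sum_{n=0}^\infty\mu_{2n}\le\sum_{n=0}^\infty\mu_n<\infty,$$
an exact identity between nonnegative (possibly infinite) quantities, not an $\asymp$ that needs checking.

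The second implication is where your proposal breaks down. The test functions $f_N=\sum_{n=0}^N\varepsilon_n u_n/\|u_n\|_{\infty,\alpha}$ do \emph{not} satisfy $\|f_N\|_{\infty,\alpha}\lesssim 1$ for any choice of unimodular signs: since $M_\infty\ge M_2$ and $\|u_n\|_{\infty,\alpha}^2=(n/\alpha e)^n\asymp\frac{n!}{\alpha^n}(n+1)^{-1/2}$,
$$\|f_N\|_{\infty,\alpha}^2\ \ge\ \sup_{r>0}e^{-\alpha r^2}\sum_{n=0}^N\frac{r^{2n}}{\|u_n\|_{\infty,\alpha}^2}\ \asymp\ \sup_{x>0}\sum_{n=0}^N\frac{x^ne^{-x}}{n!}\sqrt{n+1}\ \gtrsim\ \sqrt{N},$$
as one sees by taking $x=N$ and using that the Poisson$(N)$ law puts mass at least $\tfrac14$ on $[N/2,N]$. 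So $\|f_N\|_{\infty,\alpha}\gtrsim N^{1/4}$ for \emph{every} sign choice, and no Salem--Zygmund or Kahane argument can repair this. Relatedly, the claimed identity $\|g\|_{2,1,\alpha}^2\asymp\sum_n\mu_n^2\|u_n\|_{2,\alpha}^2/\|u_n\|_{\infty,\alpha}^2$ is false already for a single monomial: $\|u_n/\|u_n\|_{\infty,\alpha}\|_{2,1,\alpha}=\|u_n\|_{1,\alpha}/\|u_n\|_{\infty,\alpha}\asymp(n+1)^{1/2}$ while your right-hand side gives $(n+1)^{1/4}$; the $F^{2,1,\alpha}$ norm is radially an $L^1$ norm, not an $\ell^2$ norm of coefficients. (Your final dyadic Cauchy--Schwarz step, deducing $\sum\mu_n(n+1)^{-1/2}<\infty$ from $\sum\mu_n^2(n+1)^{1/2}<\infty$ and monotonicity, is correct, but you never reach its hypothesis; moreover $\sum\mu_n^2(n+1)^{1/2}<\infty$ is a strictly stronger condition on a decreasing sequence than the stated conclusion, as $\mu_n\asymp n^{-3/4}$ shows, so it is not even the right intermediate target.) The paper's necessity argument avoids randomization entirely: apply the coefficient inequality (\ref{esti1}) to $\mathscr{H}_\mu f=\sum_n\mu_na_nu_n\in F^1_\alpha$ to get $\sum_n\mu_n|a_n|\,\|u_n\|_{\infty,\alpha}\lesssim\|\mathscr{H}_\mu f\|_{1,\alpha}\lesssim\|f\|_{\infty,\alpha}$, and use its dual (\ref{esti2}) to see that the single deterministic function $f=\sum_n(n+1)^{-1/2}u_n/\|u_n\|_{\infty,\alpha}$ lies in $F^\infty_\alpha$; plugging it in yields $\sum_n\mu_n(n+1)^{-1/2}<\infty$ at once. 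If you want to keep the Rademacher flavour of Lemma \ref{kint}, you must first produce a function already known to be uniformly in $F^\infty_\alpha$, which is exactly what (\ref{esti2}) supplies and what your $f_N$ fail to be.
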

\begin{proof} Assume  that $\sum_{n=0}^\infty \mu_n<\infty$. This guarantees, due to Theorem \ref{mainteo}, that $\mathscr{H}_\mu(f)(z)=\int_1^\infty f(\frac{z}{t}) \frac{d\mu(t)}{t}$. Hence if $f\in F^\infty_\alpha$ and $\|f\|_{\infty,\alpha}\le 1$ then
 $$|\mathscr{H}_\mu(f)(z)|\le \int_1^\infty e^{\frac{\alpha|z|^2}{2 t^2}}\frac{d\mu(t)}{t}, \quad z\in \C.$$
 Therefore
 $$\|\mathscr{H}_\mu(f)\|_{1,\alpha}\le C \int_1^\infty \Big(\int_\C e^{-\frac{\alpha}{2}(1-\frac{1} {t^2})|z|^2}dA(z)\Big)\frac{d\mu(t)}{t}\le C\int_1^\infty (1-\frac{1} {t^2})^{-1}\frac{d\mu(t)}{t}.$$

 Since $\int_1^\infty (1-\frac{1} {t^2})^{-1}\frac{d\mu(t)}{t}=\sum_{n=0}^\infty \mu_{2n}<\infty$ we have the first implication.

 Assume now that  $\mathscr{H}_\mu(F^{\infty}_{\alpha})\subset F^{1}_\alpha$, which, due to Theorem \ref{mainteo}, allows us to say that $\mathscr{H}_\mu(f)=\sum_{n=0}^\infty \mu_n a_n u_n\in F^1_\alpha$ for any $f=\sum_{n=0}^\infty  a_n u_n\in F^\infty_\alpha.$

  Let us mention now the following fact on Taylor coefficients of functions in $F^1_\alpha$, (see \cite{BG} for $\alpha=2$ and \cite[Theorem 2]{T} for general values of $\alpha$),
\begin{equation} \label{esti1}
\sum_{n=0}^\infty |a_n|\|u_n\|_{1,\alpha}(n+1)^{-1/2}\le C \|f\|_{1,\alpha}.
\end{equation}
Using duality one also has that
\begin{equation} \label{esti2}
 \|f\|_{\infty,\alpha}\le C\sup_{n\ge 0} |a_n|\|u_n\|_{\infty,\alpha}(n+1)^{1/2}.
\end{equation}
 Therefore, using (\ref{esti1}) and (\ref{esti2}) and the fact $\|u_n\|_{1,\alpha}(n+1)^{-1/2}\asymp \|u_n\|_{\infty,\alpha}$ we obtain for any sequence $(a_n)$ that
\begin{eqnarray*}
\sum_{n=0}^\infty \mu_n|a_n|\|u_n\|_{\infty,\alpha}&\le& C \|\mathscr{H}_\mu(f)\|_{1,\alpha}\\
&\le &C \|f\|_{\infty,\alpha}\\
&\le& C(\sup_{n\ge 0} |a_n|\|u_n\|_{\infty,\alpha}(n+1)^{1/2}).
\end{eqnarray*}
This implies that $\sum_{n=0}^\infty \mu_n(n+1)^{-1/2}<\infty$ and the proof is complete.

\end{proof}

\par\medskip
Remarks \ref{rmk:5.1} and \ref{rmk:5.2} can be actually improved for all values of $p <q$ using the norm of the dilation operator and results on Taylor coefficients. For $t > 1$ the dilation operator $D_{1/t}$ is defined by $\left(D_{1/t}f\right)(z) = f\left(\frac{z}{t}\right).$
\par\medskip
Using that $\mu_0=1$ and $\mathcal H_\mu(u_0)=u_0$ we obtain that $\|D_{1/t}\|_{F^{p,q,\alpha}\to F^{p,q,\alpha}}\ge 1$. On the other hand, since $M_p(D_{1/t}f,r) = M_p(f, \frac{r}{t}) \leq M_p(f,r)$ for all $t > 1$ we conclude
	$$
	\|D_{1/t}\|_{F^{p,q,\alpha}\to F^{p,q,\alpha}} =1\ \ \forall t > 1.$$ We now present some estimates for the norm of dilation operators when acting from a Fock space into a smaller one.

\begin{lem} \label{dilation} Let $0<p< q \le \infty$  and $t>1$. Then $D_{1/t}: F^q _\alpha \to F^p_\alpha$ is bounded and  $$C_1\left(1-\frac{1}{t^2}\right)^{\frac{1}{2q}-\frac{1}{2p}}\le \|D_{1/t}\|_{F^q _\alpha\to F^p_\alpha}\le C_2t^{2/q }\left(1-\frac{1}{t^2}\right)^{\frac{1}{q}-\frac{1}{p}}$$
	for some constants $C_1,C_2$ independent of $t$.
\end{lem}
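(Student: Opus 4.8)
The plan is to establish the two inequalities separately. For the upper bound I would exploit that $p<q$ forces $M_p(f,\cdot)\le M_q(f,\cdot)$ and then invoke H\"older's inequality, while for the lower bound I would simply test $D_{1/t}$ on the monomials $u_n$.

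\emph{Upper bound.} Suppose first $q<\infty$. After the substitution $r=ts$,
$$\|D_{1/t}f\|_{p,\alpha}^p=\alpha p\,t^2\int_0^\infty M_p(f,s)^p e^{-\frac{\alpha p t^2}{2}s^2}s\,ds\le\alpha p\,t^2\int_0^\infty M_q(f,s)^p e^{-\frac{\alpha p t^2}{2}s^2}s\,ds,$$
and I would factor the last integrand as the product of $\bigl(M_q(f,s)^q e^{-\frac{\alpha q}{2}s^2}s\bigr)^{p/q}$ and $e^{-\frac{\alpha p(t^2-1)}{2}s^2}s^{1-p/q}$, then apply H\"older with exponents $q/p$ and $q/(q-p)$. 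The $q/p$-th power of the first factor integrates to $\frac1{\alpha q}\|f\|_{q,\alpha}^q$ by (\ref{eq:norm-fock}), while the $q/(q-p)$-th power of the second is of the form $e^{-cs^2}s$ with $c$ a constant multiple of $t^2-1$, hence reduces to $\int_0^\infty e^{-cs^2}s\,ds=\frac1{2c}$. Collecting the powers of $t$ and using $t^2-1=t^2(1-t^{-2})$ then yields exactly $\|D_{1/t}f\|_{p,\alpha}\le C_2\,t^{2/q}(1-t^{-2})^{\frac1q-\frac1p}\|f\|_{q,\alpha}$ with $C_2$ depending only on $p,q,\alpha$. For $q=\infty$ the argument is shorter: from the point-evaluation estimate $|D_{1/t}f(z)|=|f(z/t)|\le\|f\|_{\infty,\alpha}e^{\frac{\alpha}{2t^2}|z|^2}$ and $\int_\C e^{-\frac{\alpha p}{2}(1-t^{-2})|z|^2}\,dA(z)=\frac{2\pi}{\alpha p(1-t^{-2})}$ one gets $\|D_{1/t}f\|_{p,\alpha}\le(1-t^{-2})^{-1/p}\|f\|_{\infty,\alpha}$, which is the claim since $t^{2/q}=1$ and $\frac1q-\frac1p=-\frac1p$.

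\emph{Lower bound.} Put $\beta=\frac1{2p}-\frac1{2q}>0$. Since $D_{1/t}u_n=t^{-n}u_n$ and, by (\ref{normaun}), $\|u_n\|_{p,\alpha}/\|u_n\|_{q,\alpha}\asymp n^{\beta}$, testing on monomials gives $\|D_{1/t}\|_{F^q_\alpha\to F^p_\alpha}\gtrsim\sup_{n\ge1}t^{-n}n^{\beta}$. Since $n\mapsto t^{-n}n^\beta$ is (continuously) maximised at $n_\ast=\beta/\ln t$, for $t$ near $1$, say $1<t\le e^{\beta/2}$ (so that $n_\ast\ge2$), I would take $n=\lfloor\beta/\ln t\rfloor$, which gives $t^{-n}\ge e^{-\beta}$ and $n\ge\frac{\beta}{2\ln t}$, hence $\|D_{1/t}\|\gtrsim(\ln t)^{-\beta}$; as $\ln t\le t-1$ and $t-1\le\frac{e^{\beta}}{2}(1-t^{-2})$ on this bounded range, this is $\gtrsim(1-t^{-2})^{-\beta}$, i.e.\ the asserted bound. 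For $t>e^{\beta/2}$ the factor $1-t^{-2}$ is bounded below by a positive constant, so the trivial estimate $\|D_{1/t}\|\ge\|D_{1/t}u_0\|_{p,\alpha}/\|u_0\|_{q,\alpha}=1$ already yields $\|D_{1/t}\|\ge C_1(1-t^{-2})^{-\beta}$; taking $C_1$ to be the smaller of the two constants completes the proof.

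\emph{Where the difficulty lies.} The upper bound is essentially bookkeeping once the H\"older splitting and the identity $t^2-1=t^2(1-t^{-2})$ are in place. The delicate point is the lower bound: recognising the monomials as the extremal test family, carrying out the near-optimal \emph{discrete} choice $n\sim\beta/\ln t$, and comparing $\ln t$ with $1-t^{-2}$ only in a neighbourhood of $t=1$ while disposing of the regime $t\gg1$ by the trivial bound $\|D_{1/t}\|\ge1$.
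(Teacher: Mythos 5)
Your proof is correct and follows essentially the same route as the paper's: Hölder's inequality applied to the split $e^{-\frac{\alpha p t^2}{2}s^2}=e^{-\frac{\alpha p}{2}s^2}e^{-\frac{\alpha p(t^2-1)}{2}s^2}$ for the upper bound (the paper works with the area integral rather than the radial one, which is immaterial), and testing on monomials with a near-optimal choice of degree for the lower bound, where your $n=\lfloor\beta/\ln t\rfloor$ is equivalent up to constants to the paper's $n=\left[\frac{1}{t-1}\right]$ and the splitting of the range of $t$ into a neighbourhood of $1$ and its complement is identical in spirit.
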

\begin{proof}
	Let $f\in F^q _\alpha$ and assume $q <\infty$.   Then, denoting $1/p-1/q =1/u$, H\"older's inequality gives
	\begin{eqnarray*}
		\|D_{1/t}f\|_{p,\alpha}&=& \left(\frac{\alpha p}{2\pi}\int_\C |f(z/t)|^p e^{-\frac{\alpha}{2} p|z|^2}dA(z)\right)^{1/p}\\
		&=& \left(\frac{\alpha p}{2\pi}t^2\int_\C |f(z)|^p e^{-\frac{\alpha}{2} p |z|^2}e^{-\frac{\alpha}{2} p (t^2-1)|z|^2}dA(z)\right)^{1/p}\\
		&\lesssim & t^{2/p}\left(\int_\C |f(z)|^q  e^{-\frac{\alpha}{2} q  |z|^2} dA(z)\right)^{1/q }  \left(\int_\C e^{-\frac{\alpha}{2} u (t^2-1)|z|^2}dA(z)\right)^{1/u}\\
		&\lesssim & \|f\|_{q ,\alpha}t^{2/p}  \left(\frac{1}{t^2-1}\right)^{1/u}\\
		&\asymp & \|f\|_{q ,\alpha}t^{2/q }  \left(1-\frac{1}{t^2}\right)^{1/q -1/p}.
	\end{eqnarray*}
Hence we obtain the upper estimate.

To handle the lower one, just observe that
	 $D_{1/t}u_0 = u_0$ which gives $\|D_{1/t}\|_{F^q _\alpha\to F^p_\alpha}\geq 1$ for all $t > 1.$ Therefore we need to deal with  $1<t< 3/2$. Using now that $D_{1/t}u_n=\frac{1}{t^n}u_n$ for $n\in \mathbb N$ we get $$\frac{\|D_{1/t}u_n\|_{p,\alpha}}{\|u_n\|_{q ,\alpha}}\asymp \frac{1}{t^n} (n+1)^{\frac{1}{2p}-\frac{1}{2q }}.$$ We put $a = \frac{1}{2p}-\frac{1}{2q} > 0.$ For each $1 < t < \frac{3}{2}$ we consider $n:=\left[\frac{1}{t-1}\right].$ Then
	$$
	\|D_{1/t}\|_{F^q _\alpha\to F^p_\alpha} \gtrsim \frac{n^a}{t^{n}} \gtrsim   \frac{\left(\frac{1}{t-1}\right)^a}{t^{\frac{1}{t-1}}} \asymp \left(\frac{1}{t-1}\right)^a\asymp \left(1 - \frac{1}{t^2}\right)^{-a}.$$

	The case   $q =\infty$  follows similarly.
\end{proof}

To work with the inclusion $\mathscr{H}_\mu(F^q_\alpha)\subset F^p_\alpha$ in the case $1\le p\le 2<q\le \infty$ we can use results on Taylor coefficients of functions $f = \sum_{n=0}^{\infty} a_n u_n$ in Fock spaces (\cite{BG, T}).
Recall (see \cite[Theorem 4]{T}) that for $0< p\le 2$ there exists $C>0$ such that
\begin{equation} \label{eqpp}\|f\|_{p,\alpha}\le C\|(a_n \Big(\frac{n!}{\alpha^n}\Big)^{1/2} (n+1)^{\frac{1}{2}(\frac{1}{p} - \frac{1}{2})})\|_{\ell_p}
\end{equation}
and  its dual version, for $2\le q<\infty$
\begin{equation} \label{eqqq}  \|(a_n \Big(\frac{n!}{\alpha^n}\Big)^{1/2} (n+1)^{\frac{1}{2}(\frac{1}{q} - \frac{1}{2})})\|_{\ell_q}\le C\|f\|_{q,\alpha}.
\end{equation}
We shall mention some estimates for the converse inequalities. They are some improvements respect to the ones given in \cite{T}.
\begin{lem} \label{coefi} Let $1\le p\le 2\le q\le\infty$ and $f(z)=\sum_{n=0}^\infty a_n z^n$. Then there exists $C>0$ such that \begin{equation}\label{equa1}\|(a_n \sqrt{\frac{n!}{\alpha^n}}(n+1)^{\frac{1}{2}(\frac{1}{2}-\frac{1}{p})})\|_{\ell_p}\le C \|f\|_{p,\alpha},\end{equation}
	\begin{equation}\label{equa2}\|f\|_{q,\alpha}\le C\|(a_n \sqrt{\frac{n!}{\alpha^n}}(n+1)^{\frac{1}{2}(\frac{1}{2}-\frac{1}{q})})\|_{\ell_q}.\end{equation}
\end{lem}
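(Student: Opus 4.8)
The plan is to deduce (\ref{equa1}) by complex interpolation of the Taylor coefficient map between its endpoints $p=1$ and $p=2$, and then to obtain (\ref{equa2}) from (\ref{equa1}) by duality.

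Let $T$ be the linear operator sending an entire function $f=\sum_{n\ge 0}a_n u_n$ to the sequence $(a_n)_n$ of its Taylor coefficients. The identity $\|f\|_{2,\alpha}^2=\sum_{n\ge 0}|a_n|^2\frac{n!}{\alpha^n}$ shows that $T$ is an isometry of $F^2_\alpha$ into $\ell_2(w_1)$, where $\|(x_n)\|_{\ell_2(w_1)}^2=\sum_n|x_n|^2w_1(n)^2$ and $w_1(n)=\sqrt{n!/\alpha^n}$; and (\ref{esti1}), together with the relation $\|u_n\|_{1,\alpha}(n+1)^{-1/2}\asymp\sqrt{n!/\alpha^n}\,(n+1)^{-1/4}$ coming from (\ref{normaun}), shows that $T$ maps $F^1_\alpha$ boundedly into $\ell_1(w_0)$ with $w_0(n)=\sqrt{n!/\alpha^n}\,(n+1)^{-1/4}$. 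Since the Fock spaces form a complex interpolation scale, $[F^1_\alpha,F^2_\alpha]_\theta=F^p_\alpha$ with $\frac{1}{p}=1-\frac{\theta}{2}$ (see \cite{Z1}), while the Stein--Weiss interpolation theorem for weighted sequence spaces gives $[\ell_1(w_0),\ell_2(w_1)]_\theta=\ell_p(w_\theta)$ with $w_\theta=w_0^{1-\theta}w_1^{\theta}$. Using $1-\theta=\frac{2-p}{p}$ we get
$$ w_\theta(n)=\sqrt{n!/\alpha^n}\,(n+1)^{-\frac{2-p}{4p}}=\sqrt{n!/\alpha^n}\,(n+1)^{\frac{1}{2}(\frac{1}{2}-\frac{1}{p})}, $$
so that interpolating the operator $T$ yields exactly (\ref{equa1}).

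For (\ref{equa2}) fix $2\le q\le\infty$ and set $p=q'$, so $1\le p\le 2$. Since $\frac{1}{2}(\frac{1}{2}-\frac{1}{p})+\frac{1}{2}(\frac{1}{2}-\frac{1}{q})=0$ we may split $\frac{n!}{\alpha^n}=\big(\sqrt{n!/\alpha^n}\,(n+1)^{\frac{1}{2}(\frac{1}{2}-\frac{1}{p})}\big)\big(\sqrt{n!/\alpha^n}\,(n+1)^{\frac{1}{2}(\frac{1}{2}-\frac{1}{q})}\big)$. Let first $g=\sum_n b_n u_n$ be a polynomial. For every polynomial $f=\sum_n a_n u_n$, Hölder's inequality and then (\ref{equa1}) give
$$ |\langle f,g\rangle_\alpha|=\Big|\sum_n a_n\overline{b_n}\frac{n!}{\alpha^n}\Big|\le \|(a_n\sqrt{n!/\alpha^n}(n+1)^{\frac{1}{2}(\frac{1}{2}-\frac{1}{p})})\|_{\ell_p}\,\|(b_n\sqrt{n!/\alpha^n}(n+1)^{\frac{1}{2}(\frac{1}{2}-\frac{1}{q})})\|_{\ell_q}\le C\|f\|_{p,\alpha}\,\|(b_n\sqrt{n!/\alpha^n}(n+1)^{\frac{1}{2}(\frac{1}{2}-\frac{1}{q})})\|_{\ell_q}. $$
As polynomials are dense in $F^p_\alpha$ ($p<\infty$) and $(F^p_\alpha)^\ast=F^{p'}_\alpha=F^q_\alpha$ (with $F^\infty_\alpha$ when $q=\infty$), taking the supremum over $\|f\|_{p,\alpha}\le1$ proves (\ref{equa2}) for polynomials $g$. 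If $g=\sum_n b_n u_n$ is entire with finite right-hand side, apply this to the partial sums $g_N=\sum_{n\le N}b_n u_n$; since $M_q(g_N,r)\to M_q(g,r)$ for each $r>0$, Fatou's lemma applied to (\ref{eq:norm-fock}) (respectively to the supremum defining $\|\cdot\|_{\infty,\alpha}$) gives $\|g\|_{q,\alpha}\le\liminf_N\|g_N\|_{q,\alpha}\le C\|(b_n\sqrt{n!/\alpha^n}(n+1)^{\frac{1}{2}(\frac{1}{2}-\frac{1}{q})})\|_{\ell_q}$.

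The one nonroutine ingredient is the identification $[F^1_\alpha,F^2_\alpha]_\theta=F^p_\alpha$; it is classical — it follows by realizing $F^p_\alpha$ as a complemented subspace of an $L^p$-scale through a Bergman-type projection — but it has to be quoted carefully. Everything else is bookkeeping with the weight $(n+1)^{\frac{1}{2}(\frac{1}{2}-\frac{1}{p})}$, Hölder's inequality and a standard approximation. It is worth noting that the elementary approach via Hausdorff--Young, $\big(\sum_n|a_n|^{p'}r^{np'}\big)^{1/p'}\le M_p(f,r)$ integrated against $e^{-\frac{\alpha p}{2}r^2}r\,dr$, only recovers the trivial bound $|a_n|\,\|u_n\|_{p,\alpha}\le\|f\|_{p,\alpha}$ unless one tracks the joint contribution of the indices near $n$, which is essentially the more delicate computation of \cite{T} that interpolation lets us bypass.
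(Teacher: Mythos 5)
Your proof is correct and follows essentially the same route as the paper: the paper also obtains (\ref{equa1}) by applying Stein--Weiss interpolation with change of measures to the Taylor coefficient map between the endpoint $F^1_\alpha\to\ell_1((n+1)^{-1/4})$ case (which is (\ref{esti1})) and the isometric $F^2_\alpha\to\ell_2$ case, using $[F^1_\alpha,F^2_\alpha]_\theta=F^p_\alpha$, and then deduces (\ref{equa2}) by duality. The only difference is that you spell out the duality step (Hölder on the pairing, density of polynomials, Fatou), which the paper leaves implicit.
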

\begin{proof} The cases $p=1$ and $q=\infty$ correspond to (\ref{esti1}) and (\ref{esti2}) respectively. Let $T$ be the linear operator defined in $H(\C)$ given by  $$T(f)= ( a_n \sqrt{\frac{n!}{\alpha^n}} )_{n\ge 0}.$$ To show (\ref{equa1}) we consider the weight $W= ((n+1)^{-1/4})_{n\ge 0}$ and observe that (\ref{esti1}) gives $T$ is bounded from $F^1_\alpha$ into $\ell_1(W)$. Since clearly $T$ is also bounded from $F^2_\alpha$ into $\ell_2$, then using interpolation with change of measures due to Stein-Weiss (see \cite{SW}), we obtain that $T$ is bounded from $[F^1_\alpha,F^2_\alpha]_\theta= F^p_\alpha$ for $1/p=1-\theta/2$ into $[\ell_1(W),\ell_2]_\theta=\ell_p(U)$  where $U= ((n+1)^{-p(1-\theta)/4})_{n\ge 0}$.
	In particular, since $\frac{1}{p}=1-\frac{\theta}{2}$ we have that $p(1-\theta)=2-p$ and we obtain that
	$$\sum_{n=0}^\infty |a_n|^p \left(\frac{n!}{\alpha^n}\right)^{\frac{p}{2}}(n+1)^{-\frac{1}{4}(2-p)}\le C\|f\|^p_{p,\alpha}.$$ This completes the proof of (\ref{equa1}). Now (\ref{equa2}) follows by duality.
\end{proof}

Now we get the extension of Theorem \ref{case1} to other values of $p<q$.
\begin{theorem} \label{teopq} Let $1\le p\le 2 \le q\leq \infty$ with $p<q$ and let $\mu$ be a positive Borel measure on $(0,\infty)$.
	
	(i) If $\displaystyle\int_1^\infty t^{\frac{2}{q}-1}d\mu(t)<\infty$ and $\displaystyle\sum_{n=0}^\infty \mu_{n} (n+1)^{\frac{q-p}{pq}-1}<\infty $ then $
	\mathscr{H}_\mu:F^q_\alpha\to F^p_\alpha$ is bounded.
	
	(ii) If $\mathscr{H}_\mu:F^q_\alpha\to F^p_\alpha$ is bounded then $\displaystyle\sum_{n=0}^\infty \mu_n^{\frac{pq}{q-p}}(n+1)^{-1/2}<\infty$.
\end{theorem}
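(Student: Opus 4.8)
The plan is to prove the two implications by different devices: the sufficient condition (i) via the norm of the dilation operator (Lemma \ref{dilation}) and Minkowski's inequality, and the necessary condition (ii) via the sharp coefficient inequalities (\ref{equa1})--(\ref{equa2}), reduced to a diagonal multiplier between sequence spaces. Throughout put $\frac{1}{u}=\frac{1}{p}-\frac{1}{q}$, so $u=\frac{pq}{q-p}\in[1,\infty)$ and the hypothesis and conclusion read $\sum_n\mu_n(n+1)^{\frac{1}{u}-1}<\infty$ and $\sum_n\mu_n^u(n+1)^{-1/2}<\infty$ respectively.

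For (i), I would first note that either summability hypothesis forces $\mu_n<\infty$ for all $n$ and $\mu((0,1))=0$: if $\mu((0,1-\varepsilon))>0$ for some $\varepsilon\in(0,1)$ then $\mu_n\ge(1-\varepsilon)^{-(n+1)}\mu((0,1-\varepsilon))$ grows geometrically, incompatible with $\sum_n\mu_n(n+1)^{\frac{1}{u}-1}<\infty$. Hence $\mathscr{H}_\mu f(z)=\int_1^\infty (D_{1/t}f)(z)\,\frac{d\mu(t)}{t}$, and since $p\ge1$ Minkowski's integral inequality together with Lemma \ref{dilation} give
$$\|\mathscr{H}_\mu f\|_{p,\alpha}\le\int_1^\infty\|D_{1/t}f\|_{p,\alpha}\,\frac{d\mu(t)}{t}\le C\|f\|_{q,\alpha}\int_1^\infty t^{\frac{2}{q}-1}\Big(1-\frac{1}{t^2}\Big)^{-1/u}d\mu(t),$$
so everything reduces to finiteness of the last integral. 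On $[2,\infty)$ the factor $(1-t^{-2})^{-1/u}$ is bounded, so that part is $\lesssim\int_1^\infty t^{\frac{2}{q}-1}d\mu(t)<\infty$ by the first hypothesis (which also gives $\mu_0\le\int_1^\infty t^{\frac{2}{q}-1}d\mu(t)<\infty$). On $[1,2)$ we have $t^{\frac{2}{q}-1}\asymp1$ and $1-t^{-2}\asymp t-1$; decomposing $[1,2)=\bigcup_{k\ge0}I_k$ with $I_k=[1+2^{-k-1},1+2^{-k})$ yields $\int_{I_k}(1-t^{-2})^{-1/u}\,d\mu\asymp 2^{k/u}\mu(I_k)$.

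The remaining step — which I expect to be the main obstacle — is the comparison $\sum_k 2^{k/u}\mu(I_k)\lesssim\sum_n\mu_n(n+1)^{\frac{1}{u}-1}$, i.e. transferring the singularity of the dilation norm at $t=1$ to the moment sequence. For $t\in I_k$ one has $t\le 1+2^{-k}\le e^{2^{-k}}$, so $t^{-(2^k+1)}\ge e^{-2}$ and hence $\mu_{2^k}\ge e^{-2}\mu(I_k)$. Since $\mu((0,1))=0$, the sequence $(\mu_n)$ is nonincreasing, so the dyadic block $\{2^k\le n<2^{k+1}\}$ (with $2^k$ terms, on which $\mu_n\ge\mu_{2^{k+1}}$ and $(n+1)^{\frac{1}{u}-1}\ge(2^{k+1})^{\frac{1}{u}-1}$) contributes at least $\frac{1}{2}\,2^{(k+1)/u}\mu_{2^{k+1}}$ to $\sum_n\mu_n(n+1)^{\frac{1}{u}-1}$; summing over $k$ gives $\sum_k 2^{k/u}\mu_{2^k}\lesssim\mu_0+\sum_n\mu_n(n+1)^{\frac{1}{u}-1}<\infty$, and combining with $\mu(I_k)\lesssim\mu_{2^k}$ finishes (i).

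For (ii), assume $\mathscr{H}_\mu:F^q_\alpha\to F^p_\alpha$ is bounded. Then it is bounded on $F^q_\alpha$ (as $F^p_\alpha\subset F^q_\alpha$), so Theorem \ref{mainteo} gives $\mu((0,1))=0$ and $\mathscr{H}_\mu f=\sum_n\mu_na_nu_n$ for $f=\sum_n a_nu_n$. Applying (\ref{equa1}) to $\mathscr{H}_\mu f$ and (\ref{equa2}) to $f$, one gets for every finitely supported $(a_n)$
$$\Big(\sum_n\mu_n^p|a_n|^p\Big(\frac{n!}{\alpha^n}\Big)^{p/2}(n+1)^{\frac{p}{2}(\frac{1}{2}-\frac{1}{p})}\Big)^{1/p}\le C\|\mathscr{H}_\mu f\|_{p,\alpha}\le C\|f\|_{q,\alpha}\le C\Big(\sum_n|a_n|^q\Big(\frac{n!}{\alpha^n}\Big)^{q/2}(n+1)^{\frac{q}{2}(\frac{1}{2}-\frac{1}{q})}\Big)^{1/q}$$
(with the evident modification when $q=\infty$). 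Substituting $c_n=a_n(n!/\alpha^n)^{1/2}(n+1)^{\frac{1}{2}(\frac{1}{2}-\frac{1}{q})}$ turns this into $\bigl\|(\mu_n(n+1)^{-1/(2u)}c_n)\bigr\|_{\ell^p}\le C\|c\|_{\ell^q}$ for all finitely supported $(c_n)$, i.e. the diagonal operator with entries $d_n=\mu_n(n+1)^{-1/(2u)}$ maps $\ell^q$ boundedly into $\ell^p$. Since $1\le p<q\le\infty$, a standard argument (Hölder for one inequality, an explicit test sequence such as $c_n=d_n^{u/q}$ truncated for the reverse) shows a diagonal operator $\ell^q\to\ell^p$ is bounded precisely when its entries lie in $\ell^u$; hence $\sum_n d_n^u=\sum_n\mu_n^u(n+1)^{-1/2}<\infty$, which is the assertion.
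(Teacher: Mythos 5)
Your proof is correct. Part (ii) is essentially the paper's own argument: the same combination of (\ref{equa1}) and (\ref{equa2}) reduces matters to a diagonal multiplier $\ell^q\to\ell^p$ with entries $\mu_n(n+1)^{-1/(2u)}$, and the paper simply asserts the resulting $\ell^u$ membership, which your explicit test sequence $c_n=d_n^{u/q}$ justifies in the standard way. Part (i) follows the same skeleton as the paper --- Minkowski's integral inequality plus Lemma \ref{dilation} --- but you treat the resulting integral $\int_1^\infty t^{2/q-1}\left(1-t^{-2}\right)^{-1/u}d\mu(t)$ differently. The paper expands $\left(1-1/t^2\right)^{-\gamma}\asymp\sum_n(n+1)^{\gamma-1}t^{-2n}$ and integrates term by term, landing directly on $\int_1^\infty t^{2/q-1}d\mu+\sum_n(n+1)^{\gamma-1}\mu_{n-1}$ in two lines; you instead decompose $(1,2)$ dyadically, control $\mu(I_k)$ by $\mu_{2^k}$, and compare $\sum_k 2^{k/u}\mu_{2^k}$ with dyadic blocks of the moment sum. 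Both routes are valid and yield the same conclusion; the binomial-series route is shorter, while yours is more elementary and makes visible which scale of $t-1$ corresponds to which range of $n$. One small point to make explicit: the factor $\left(1-t^{-2}\right)^{-1/u}$ is singular at $t=1$ and your intervals $I_k$ only cover $(1,2)$, so you should record that the hypothesis $\sum_n\mu_n(n+1)^{\frac1u-1}<\infty$ forces $\mu(\{1\})=0$ (otherwise $\mu_n\ge\mu(\{1\})$ for all $n$ and the series diverges because $\frac1u-1>-1$); the paper's proof contains the same implicit step.
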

\begin{proof} We only discuss the case $1\le p\le 2 \le q < \infty.$ The case $q=\infty$ is similar and left to the reader.
	\par\medskip
	(i) Since the series $\sum_{n=0}^\infty (n+1)^{\frac{q-p}{pq}-1}\mu_{n}$ is convergent then, arguing as in Theorem \ref{mainteo}, $\mu(0,1)=0$ and
	$\mathscr{H}_\mu(f)(z)=\int_1^\infty D_{1/t}f(z) \frac{d\mu(t)}{t}.$ Therefore, writing $\gamma = \frac{1}{p}-\frac{1}{q}>0$ and using
	Lemma \ref{dilation}, the assumption $q\ge 2,$ and
	$$
	\displaystyle \frac{1}{(1-s)^{\gamma}}\asymp \sum_{n=0}^\infty (n+1)^{\gamma-1}s^n\ \mbox{for}\  0<s<1,$$ we get
	\begin{eqnarray*}\|\mathscr{H}_\mu(f)\|_{p,\alpha}&\le& \int_1^\infty \|D_{1/t}f\|_{p,\alpha} \frac{d\mu(t)}{t}\\
		&\le& C \|f\|_{q,\alpha}\int_1^\infty t^{2/q}(1-\frac{1}{t^2})^{-\gamma} \frac{d\mu(t)}{t}\\
		&\le& C \|f\|_{q,\alpha} \int_1^\infty \sum_{n=0}^\infty (n+1)^{\gamma-1}\frac{t^{2/q}}{t^{2n}}\frac{d\mu(t)}{t}\\
		&\le& C \|f\|_{q,\alpha}\left(\int_1^\infty t^{\frac{2}{q}-1}d\mu(t)+ \int_1^\infty \sum_{n=1}^\infty (n+1)^{\gamma-1}\frac{t}{t^{n}}\frac{d\mu(t)}{t}\right)\\
		&\le& C \|f\|_{q,\alpha} \left(\int_1^\infty t^{\frac{2}{q}-1}d\mu(t)+ \sum_{n=1}^\infty (n+1)^{\gamma-1}\mu_{n-1}\right).
	\end{eqnarray*}
	
	(ii) Assume now that $\mathscr{H}_\mu: F^q_\alpha\to F^p_\alpha$ is bounded.
	Combining (\ref{equa1}) and (\ref{equa2}) in Lemma \ref{coefi} we obtain that
	\begin{eqnarray*}
		&&\|(\mu_n a_n \Big(\frac{n!}{\alpha^n}\Big)^{1/2} (n+1)^{\frac{1}{2}(\frac{1}{2}-\frac{1}{p})})\|_{\ell_p}\\
		&\le& C\|\mathscr{H}_\mu(f)\|_{p,\alpha}\le C\|f\|_{q,\alpha}\\
		&\le& C\|( a_n \Big(\frac{n!}{\alpha^n}\Big)^{1/2} (n+1)^{\frac{1}{2}(\frac{1}{2}-\frac{1}{q})})\|_{\ell_q} .
	\end{eqnarray*}
	Therefore $(\mu_n (n+1)^{\frac{1}{2}(\frac{1}{q}-\frac{1}{p})})_{n\geq 0}\in \ell_\gamma$ for $\frac{1}{\gamma}=\frac{1}{p}-\frac{1}{q}$.
	
	We then conclude that $\displaystyle\sum_{n=0}^\infty \mu_n^{\frac{pq}{q-p}}(n+1)^{-1/2}<\infty$.
\end{proof}

In relation to Theorem \ref{teopq} (i) it is worth mentioning that the integral condition is easier to hold for large values of $q$ but in such a case the condition that refers to the convergence of the series is more demanding.
\par\medskip
The case  $q=\infty$ in Theorem \ref{teopq} gives the following result.
\begin{corollary}\label{th:from-inf-to-1} Let $\mu$ be a positive Borel measure and $1\le p\le 2$.
	$$\sum_{n=0}^\infty \mu_n (n+1)^{-1/p'}<\infty \Longrightarrow \mathscr{H}_\mu(F^\infty_\alpha)\subset F^p_\alpha \Longrightarrow \sum_{n=0}^\infty \mu^p_n(n+1)^{-1/2}<\infty.$$
\end{corollary}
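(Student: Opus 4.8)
The plan is to deduce the corollary as the endpoint case $q=\infty$ of Theorem \ref{teopq}, carrying out the argument that was left to the reader there and simplifying the exponents in the limit $q\to\infty$. The only bookkeeping needed is that $\frac{q-p}{pq}-1=\frac1p-\frac1q-1\longrightarrow\frac1p-1=-\frac1{p'}$ and $\bigl(\frac{pq}{q-p}\bigr)=\bigl(\frac1p-\frac1q\bigr)^{-1}\longrightarrow p$, so that the two conditions displayed in the statement are exactly the $q=\infty$ versions of the hypothesis of Theorem \ref{teopq}(i) and of the conclusion of Theorem \ref{teopq}(ii) (the integral condition $\int_1^\infty t^{2/q-1}\,d\mu(t)<\infty$ degenerating to $\mu_0<\infty$, which holds by assumption).

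\emph{First implication.} Assuming $\sum_{n}\mu_n(n+1)^{-1/p'}<\infty$, I would first note, arguing as in Theorem \ref{mainteo} (exactly as in the proof of Theorem \ref{teopq}(i)), that $\mu(0,1)=0$; hence $(\mu_n)$ is nonincreasing and $\mathscr{H}_\mu(f)(z)=\int_1^\infty (D_{1/t}f)(z)\,\frac{d\mu(t)}{t}$. Applying Minkowski's integral inequality (valid since $p\ge1$) together with the case $q=\infty$ of Lemma \ref{dilation}, namely $\|D_{1/t}\|_{F^\infty_\alpha\to F^p_\alpha}\lesssim (1-t^{-2})^{-1/p}$, gives
$$\|\mathscr{H}_\mu(f)\|_{p,\alpha}\le\int_1^\infty\|D_{1/t}f\|_{p,\alpha}\,\frac{d\mu(t)}{t}\lesssim\|f\|_{\infty,\alpha}\int_1^\infty\Bigl(1-\frac1{t^2}\Bigr)^{-1/p}\,\frac{d\mu(t)}{t}.$$
Since $1/p>0$ one has $(1-s)^{-1/p}\asymp\sum_{n\ge0}(n+1)^{1/p-1}s^n$ for $0\le s<1$; inserting $s=t^{-2}$ and integrating term by term (legitimate because $\mu(0,1)=0$ and all terms are nonnegative) turns the last integral into $\sum_{n\ge0}(n+1)^{1/p-1}\mu_{2n}$, and since $\mu_{2n}\le\mu_n$ this is $\le\sum_{n\ge0}\mu_n(n+1)^{-1/p'}<\infty$. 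Thus $\mathscr{H}_\mu:F^\infty_\alpha\to F^p_\alpha$ is bounded.

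\emph{Second implication.} Assuming $\mathscr{H}_\mu(F^\infty_\alpha)\subset F^p_\alpha$, condition (iii) of Theorem \ref{mainteo} (with $q=\infty$) again gives $\mu(0,1)=0$ and $\mathscr{H}_\mu\bigl(\sum a_nu_n\bigr)=\sum\mu_na_nu_n$. I would then test boundedness on the polynomials $f_N=\sum_{n=0}^{N}(n+1)^{-1/4}\sqrt{\alpha^n/n!}\,u_n$: they are scaled so that $|a_n|\,\|u_n\|_{\infty,\alpha}(n+1)^{1/2}\asymp1$, so the sharp endpoint estimate (\ref{esti2}) yields $\sup_N\|f_N\|_{\infty,\alpha}<\infty$. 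On the other hand $\mathscr{H}_\mu f_N=\sum_{n=0}^N\mu_n(n+1)^{-1/4}\sqrt{\alpha^n/n!}\,u_n$, and the lower coefficient estimate (\ref{equa1}) of Lemma \ref{coefi}, after collecting the exponents $-\frac14+\frac12\bigl(\frac12-\frac1p\bigr)=-\frac1{2p}$, gives
$$\Bigl(\sum_{n=0}^N\mu_n^{p}(n+1)^{-1/2}\Bigr)^{1/p}\lesssim\|\mathscr{H}_\mu f_N\|_{p,\alpha}\lesssim\|f_N\|_{\infty,\alpha}\lesssim1$$
uniformly in $N$, and letting $N\to\infty$ yields $\sum_n\mu_n^p(n+1)^{-1/2}<\infty$.

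The proof is essentially bookkeeping once Theorem \ref{teopq} and its ingredients (Lemmas \ref{dilation} and \ref{coefi}) are in place; the one point that calls for care is the choice of the test functions $f_N$ in the second implication — they must be normalized like $1/\|u_n\|_{\infty,\alpha}$ so that the \emph{sharp} estimate (\ref{esti2}), rather than the crude triangle-inequality bound (which would diverge), keeps $\|f_N\|_{\infty,\alpha}$ uniformly bounded.
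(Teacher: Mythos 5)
Your proposal is correct and follows essentially the same route as the paper, which proves this corollary simply by invoking the case $q=\infty$ of Theorem \ref{teopq}; you have merely written out the endpoint details (the dilation-norm estimate with $\gamma=1/p$ for the first implication, and the combination of (\ref{equa1}) with (\ref{esti2}) via suitably normalized test polynomials for the second) that the paper leaves to the reader. The exponent bookkeeping ($\frac{q-p}{pq}-1\to-\frac{1}{p'}$, $\frac{pq}{q-p}\to p$, and $-\frac14+\frac12(\frac12-\frac1p)=-\frac1{2p}$) all checks out.
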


The above results can be easily improved in the case $q=2$.

\begin{theorem} Let $\mu$ a Borel positive measure on $(0,\infty)$ and $0< p<2.$
	\begin{itemize}
		\item[(i)] If $\displaystyle\sum_{n=0}^\infty\mu^{\frac{2p}{2-p}}_n\sqrt{n+1} <\infty$ then $\mathscr{H}_\mu:F^2_\alpha\to F^p_\alpha$ is bounded.
		\item[(ii)] If $\mathscr{H}_\mu:F^2_\alpha\to F^p_\alpha$ is bounded then $\displaystyle\sum_{n=0}^\infty\mu^{\frac{2p}{2-p}}_n <\infty$.
	\end{itemize}	
\end{theorem}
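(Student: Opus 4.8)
The plan is to obtain (i) from a single application of Hölder's inequality, and (ii) by combining a lower bound for the mixed norm $\|\cdot\|_{2,p,\alpha}$ on a lacunary family of monomials with the elementary fact that a bounded diagonal map $\ell^2\to\ell^p$ has its multiplier in $\ell^{\beta}$, where I write $\beta=\frac{2p}{2-p}$, so that $\frac1\beta=\frac1p-\frac12$.

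For (i): since $\sum_n\mu_n^{\beta}\sqrt{n+1}<\infty$ forces $\mu_n\to0$, and $(\sqrt[n]{\mu_n})_n$ is nondecreasing (as observed in the proof of Theorem \ref{mainteo}), we get $\sup_n\sqrt[n]{\mu_n}\le1$, hence $\mu(0,1)=0$ by Lemma \ref{prop:momentos}, and so $\mathscr H_\mu f=\sum_n\mu_na_nu_n$ for $f=\sum_na_nu_n$ by Theorem \ref{mainteo}. Applying the coefficient inequality (\ref{eqpp}) to $\mathscr H_\mu f$ and raising to the $p$-th power gives $\|\mathscr H_\mu f\|_{p,\alpha}^p\le C\sum_n\mu_n^p(n+1)^{\frac12-\frac p4}\big(|a_n|^2\frac{n!}{\alpha^n}\big)^{p/2}$. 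Now Hölder's inequality with exponents $\frac2p$ and $\frac2{2-p}$, applied to $\big(|a_n|^2 n!/\alpha^n\big)_n\in\ell^1$ (whose sum is $\|f\|_{2,\alpha}^2$) and to $\big(\mu_n^p(n+1)^{\frac12-\frac p4}\big)_n$ (whose $\frac2{2-p}$-th power equals $\mu_n^{\beta}(n+1)^{1/2}$), yields $\|\mathscr H_\mu f\|_{p,\alpha}\le C\|f\|_{2,\alpha}\big(\sum_n\mu_n^{\beta}\sqrt{n+1}\big)^{(2-p)/(2p)}$. This part is routine.

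For (ii): since $F^p_\alpha\subset F^2_\alpha$, boundedness of $\mathscr H_\mu:F^2_\alpha\to F^p_\alpha$ forces $\mathscr H_\mu$ to be bounded on $F^2_\alpha$, so $\mu(0,1)=0$ and $(\mu_n)$ is nonincreasing (Theorem \ref{mainteo}); moreover Theorem \ref{equiv}(i) with $q=2$ gives that $\mathscr H_\mu:F^2_\alpha\to F^{2,p,\alpha}$ is bounded. The key point is the estimate
$$\|\textstyle\sum_{k\ge1}b_ku_{k^2}\|_{2,p,\alpha}^p\ \gtrsim\ \sum_{k\ge1}|b_k|^p\,\|u_{k^2}\|_{p,\alpha}^p .$$
To prove it, substitute $x=r^2$ and use $e^{-\frac{\alpha p}2 r^2}=(e^{-\frac\alpha2 r^2})^p$ to write, for $g=\sum_n b_nu_n$, $\|g\|_{2,p,\alpha}^p=\frac{\alpha p}2\int_0^\infty\big(\sum_n|b_n|^2\phi_n(x)\big)^{p/2}dx$ with $\phi_n(x)=x^ne^{-\alpha x}$. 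The function $\phi_n$ attains its maximum $(n/\alpha e)^n$ at $x=n/\alpha$, and a second-order estimate on $\log\phi_n$ shows it stays comparable to this maximum, with a universal constant, on $J_n:=[\frac n\alpha-\frac{\sqrt n}{10\alpha},\frac n\alpha+\frac{\sqrt n}{10\alpha}]$; a direct check shows the intervals $J_{k^2}$, $k\ge1$, are pairwise disjoint. Restricting the integral to $\bigcup_k J_{k^2}$, keeping only the $k$-th summand on $J_{k^2}$, and using $|J_{k^2}|\asymp k$ together with Stirling's formula, which gives $\|u_{k^2}\|_{p,\alpha}^p=(\frac2{\alpha p})^{k^2p/2}\Gamma(\frac{k^2p}2+1)\asymp k\,(k^2/\alpha e)^{k^2p/2}$, one obtains the displayed lower bound.

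Granting this, apply it with $g=\mathscr H_\mu f$, $f=\sum_k b_ku_{k^2}$ (a finite sum), so $\mathscr H_\mu f=\sum_k\mu_{k^2}b_ku_{k^2}$ and $\|f\|_{2,\alpha}^2=\sum_k|b_k|^2\|u_{k^2}\|_{2,\alpha}^2$: putting $e_k=b_k\|u_{k^2}\|_{2,\alpha}$ and $\lambda_k=\mu_{k^2}\|u_{k^2}\|_{p,\alpha}/\|u_{k^2}\|_{2,\alpha}\asymp\mu_{k^2}k^{1/p-1/2}$ (by (\ref{normaun})), boundedness reads $\sum_k\lambda_k^p|e_k|^p\lesssim(\sum_k|e_k|^2)^{p/2}$ for all $(e_k)$. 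Testing on $e_k^2=\lambda_k^{\beta}$ (the standard converse to Hölder) gives $(\lambda_k)_k\in\ell^{\beta}$, that is $\sum_k k^{(1/p-1/2)\beta}\mu_{k^2}^{\beta}=\sum_k k\,\mu_{k^2}^{\beta}<\infty$; finally, since $(\mu_n)$ is nonincreasing, $\sum_{n=k^2}^{(k+1)^2-1}\mu_n^{\beta}\le(2k+1)\mu_{k^2}^{\beta}$, and summing over $k$ yields $\sum_n\mu_n^{\beta}<\infty$. The main obstacle is precisely the lower bound lemma: one must make the concentration of $\phi_n$ near $x=n/\alpha$ quantitative, verify the disjointness of the $J_{k^2}$, and line up the constants with $\|u_{k^2}\|_{p,\alpha}$ via Stirling; the choice $n_k=k^2$ is forced, since the concentration intervals $J_{n_k}$ must be disjoint (ruling out a much denser lacunary sequence) while the gaps $n_{k+1}-n_k\asymp k$ must stay small enough that $\sum_n\mu_n^{\beta}$ is recovered from $\sum_k k\,\mu_{k^2}^{\beta}$.
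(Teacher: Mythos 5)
Your proof is correct. Part (i) is essentially the paper's own argument: apply the coefficient estimate (\ref{eqpp}) to $\mathscr{H}_\mu f=\sum_n\mu_na_nu_n$ and then H\"older with $\frac1p=\frac12+\frac1\gamma$, $\gamma=\frac{2p}{2-p}$ (your preliminary reduction to $\mu(0,1)=0$ is fine, though note the monotonicity of $\sqrt[n]{\mu_n}$ invoked from Theorem \ref{mainteo} presupposes the harmless normalization $\mu_0=1$). Part (ii) also starts exactly as in the paper, upgrading to boundedness of $\mathscr{H}_\mu:F^2_\alpha\to F^{2,p,\alpha}$ via Theorem \ref{equiv}, but your key lemma is genuinely different. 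You establish the lower bound for $\|\cdot\|_{2,p,\alpha}$ only on lacunary sums $\sum_kb_ku_{k^2}$, via Laplace-type concentration of $x^ne^{-\alpha x}$ on intervals of length $\asymp\sqrt n$ about $n/\alpha$; this forces the sparse choice $n_k=k^2$ and requires the closing monotonicity step $\sum_{n=k^2}^{(k+1)^2-1}\mu_n^\beta\le(2k+1)\mu_{k^2}^\beta$. The paper instead obtains the full-coefficient bound $\|g\|_{2,p,\alpha}^p\gtrsim\sum_n|b_n|^p\big(\frac{n!}{\alpha^n}\big)^{p/2}$ in one line: after the substitution $s=\alpha r^2$ the integrand becomes $\big(\sum_nc_n\,\frac{s^ne^{-s}}{n!}\big)^{p/2}$ with $c_n=|b_n|^2\frac{n!}{\alpha^n}$, and since the Poisson weights $\frac{s^ne^{-s}}{n!}$ sum to $1$ in $n$ and integrate to $1$ in $s$, concavity of $t\mapsto t^{p/2}$ (here is where $p<2$ enters) plus integration gives $\sum_nc_n^{p/2}$ directly, with no lacunarity, disjointness check, or Stirling bookkeeping. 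Your concentration bound is pointwise stronger on the sparse set (it produces $\|u_{k^2}\|_{p,\alpha}^p$, an extra factor $k^{1-p/2}$ over $\|u_{k^2}\|_{2,\alpha}^p$), and that surplus exactly compensates for testing only on the frequencies $k^2$, so both routes land on $(\mu_n)\in\ell^{2p/(2-p)}$; the paper's Jensen trick is shorter, while yours is more elementary and makes the spectral concentration mechanism explicit.
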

\begin{proof}  (i) Let $\gamma=\frac{2p}{2-p}$ be such that $\frac{1}{p} = \frac{1}{2}+ \frac{1}{\gamma}$ and $f(z)=\sum_{n=0}^\infty a_n z^n\in F^2_\alpha$. Using (\ref{eqpp}) we have
	\begin{eqnarray*}
		\|\mathscr{H}_\mu(f)\|_{p,\alpha}&\le& C \|(\mu_n a_n \sqrt{\frac{n!}{\alpha^n}} (n+1)^{\frac{1}{2p}-\frac{1}{4}})\|_{\ell_p}\\
		&\le& C \|(\mu_n  (n+1)^{\frac{1}{2p}-\frac{1}{4}})\|_{\ell_\gamma}\|(a_n \sqrt{\frac{n!}{\alpha^n}} )\|_{\ell_2}\\
		&\le& C \left(\sum_{n=0}^\infty\mu_n^\gamma  \sqrt{n+1}\right)^{1/\gamma}\|f \|_{2,\alpha}.
	\end{eqnarray*}
	
	(ii)  Assume now that $\mathscr{H}_\mu: F^2_\alpha\to F^p_\alpha$ is bounded. Using Lemma \ref{equiv} then $\mathscr{H}_\mu: F^2_\alpha\to F^{2,p,\alpha}$ is bounded.
	
	Hence, using that $\sum_{n=0}^\infty \frac{r^n e^{-r}}{n!}=1$ and $p<2$ we have
	\begin{eqnarray*}
		\|\mathscr{H}_\mu(f)\|^p_{2,p,\alpha}&\asymp& \int_0^\infty \left(\sum_{n=0}^\infty \mu_n^2 |a_n|^2 r^{2n} e^{-\alpha r^2}\right)^{p/2}rdr\\
		&\asymp& \int_0^\infty \left(\sum_{n=0}^\infty \mu_n^2 \frac{|a_n|^2}{\alpha^n} n! \frac{r^n e^{-r}}{n!}\right)^{p/2}dr\\
		&\ge&  C\sum_{n=0}^\infty \mu_n^p |a_n|^p\left(\frac{n!}{\alpha^{n}}\right)^{p/2}.
	\end{eqnarray*}
	Now using that  $\|\mathscr{H}_\mu(f)\|_{2,p,\alpha}\le C \left(\sum_{n=0}^\infty |a_n|^2\frac{n!}{\alpha^{n}}\right)^{1/2}$ for all sequences $(a_n)$ with
	$\sum_{n=0}^\infty |a_n|^2\frac{n!}{\alpha^{n}} < \infty$ we obtain $(\mu_n)\in \ell_\gamma$ for $\gamma$ such that $\frac{1}{p}=\frac{1}{2}+\frac{1}{\gamma}$ and the proof of this implication is complete.
\end{proof}

\begin{corollary} \label{case2} Let $\mu$ be a positive Borel measure. Then
	$$\displaystyle\sum_{n=0}^\infty\mu^2_n \sqrt{n+1}<\infty \Longrightarrow \mathscr{H}_\mu(F^2_\alpha)\subset F^{2,1,\alpha }$$
	$$\Longleftrightarrow \mathscr{H}_\mu(F^2_\alpha)\subset F^1_\alpha\Longleftrightarrow \mathscr{H}_\mu(F^\infty_\alpha)\subset F^{2}_{\alpha}
	\Longrightarrow \displaystyle\sum_{n=0}^\infty\mu_n^2<\infty.$$
\end{corollary}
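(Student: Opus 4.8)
The plan is to obtain every assertion of the corollary by chaining together results already proved, all specialized to the pair $p=1$, $q=2$.

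First I would read off the leftmost implication from part (i) of the theorem immediately preceding this corollary (the one characterizing $\mathscr{H}_\mu:F^2_\alpha\to F^p_\alpha$ for $0<p<2$): at $p=1$ the exponent $\frac{2p}{2-p}$ equals $2$, so the hypothesis $\sum_{n\ge 0}\mu_n^2\sqrt{n+1}<\infty$ is exactly its sufficient condition, and hence $\mathscr{H}_\mu:F^2_\alpha\to F^1_\alpha$ is bounded; composing with the first equivalence below upgrades this to $\mathscr{H}_\mu(F^2_\alpha)\subset F^{2,1,\alpha}$.

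Next I would justify the two equivalences. The equivalence $\mathscr{H}_\mu(F^2_\alpha)\subset F^{2,1,\alpha}\Longleftrightarrow\mathscr{H}_\mu(F^2_\alpha)\subset F^1_\alpha$ is precisely Theorem \ref{equiv}(i) with $q=2$ (so that $F^{2,q,\alpha}=F^2_\alpha$) and $p=1$. The equivalence $\mathscr{H}_\mu(F^2_\alpha)\subset F^1_\alpha\Longleftrightarrow\mathscr{H}_\mu(F^\infty_\alpha)\subset F^2_\alpha$ is the duality proposition opening Section \ref{sec:large-small}, applied with $p=1$, $q=2$, for which the conjugate indices are $p'=\infty$ and $q'=2$. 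Here one should recall the paper's convention that ``$\mathscr{H}_\mu(X)\subset Y$'' means that $\mathscr{H}_\mu:X\to Y$ is a \emph{bounded} operator, and that by the closed-graph argument following Theorem \ref{mainteo} boundedness is automatic once the inclusion holds, so the hypotheses of those cited results are met.

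Finally, for the rightmost implication I would pass, through the equivalence just established, from $\mathscr{H}_\mu(F^\infty_\alpha)\subset F^2_\alpha$ to $\mathscr{H}_\mu(F^2_\alpha)\subset F^1_\alpha$, and then apply part (ii) of the preceding theorem with $p=1$, which yields $\sum_{n\ge 0}\mu_n^{2/(2-1)}=\sum_{n\ge 0}\mu_n^2<\infty$. There is no genuine difficulty here: the corollary merely repackages the $p=1$, $q=2$ instances of the preceding theorem, of Theorem \ref{equiv}, and of the duality proposition. The only points requiring care are the index bookkeeping --- checking that $F^{2,1,\alpha}$ denotes the space with inner norm $M_2$ and integration exponent $1$, so that it sits \emph{inside} $F^1_\alpha$ by Proposition \ref{embeddings1}(i), and verifying the arithmetic of the exponents $\frac{2p}{2-p}$ and of the conjugate pairs at $p=1$.
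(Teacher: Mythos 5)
Your proposal is correct and is exactly the intended derivation: the paper states this corollary without proof immediately after the theorem on $\mathscr{H}_\mu:F^2_\alpha\to F^p_\alpha$, and the chain you describe (that theorem at $p=1$ where $\tfrac{2p}{2-p}=2$, the ``in particular'' case of Theorem \ref{equiv}(i), and the duality proposition with $p=1$, $q=2$, $p'=\infty$, $q'=2$) is precisely how it follows. Your bookkeeping on the exponents and on the inclusion $F^{2,1,\alpha}\subset F^1_\alpha$ is also right.
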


We would like now to analyze the inclusions between Fock and mixed norm Fock spaces. Recall that for $q<p$ we have $F^{p,q,\alpha}\subset F^q_\alpha$
while for $q>p$ then $F^q_\alpha \subset F^{p,q,\alpha}$. We shall make use of the following general lemma.

\begin{lem} \label{explema} Let $\delta\in \mathbb R$ and let $(\gamma_n)_n$ be a sequence of non-negative real numbers such that $(\gamma_n(n+1)^\beta)_n$ is monotonic for some $\beta\in \mathbb R$. Then the following are equivalent:

	(i) $\sup_n \gamma_n (n+1)^{\delta}<\infty.$
	
	(ii) $\sum_{n=0}^\infty \frac{\gamma_n (n+1)^{\delta}x^{n}}{n!}\le Ce^{x} \ \ \forall x>0$ for some constant $C > 0.$
	
\end{lem}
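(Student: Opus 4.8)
The plan is to prove the two implications separately, using in both directions the comparison $\sum_{n=0}^\infty \frac{x^n}{n!} = e^x$ together with the fact that the Gamma function controls the tail weights $(n+1)^\delta$. First I would dispose of the easy direction $(i)\Rightarrow(ii)$: if $\gamma_n(n+1)^\delta \le C$ for all $n$, then
\[
\sum_{n=0}^\infty \frac{\gamma_n (n+1)^\delta x^n}{n!} \le C \sum_{n=0}^\infty \frac{x^n}{n!} = C e^x,
\]
so (ii) holds with the same constant $C$; the monotonicity hypothesis and the exponent $\beta$ play no role here.

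The substantive direction is $(ii)\Rightarrow(i)$, and this is where the monotonicity assumption and the parameter $\beta$ enter. The obstacle is that a single power series bound of the form $\sum_n c_n x^n/n! \le Ce^x$ does not by itself force $\sup_n c_n < \infty$ — one needs some regularity of the coefficients, which is exactly what the hypothesis that $(\gamma_n (n+1)^\beta)_n$ is monotonic supplies. My approach would be to extract information at the point $x = N$ (or $x = n$) for a well-chosen index, using the standard estimate that the partial sum of the Poisson-type series $\sum_{k} \frac{N^k}{k!}$ over a block of indices near $k = N$ captures a fixed positive proportion of $e^N$; equivalently, by Stirling, $\frac{N^N}{N!} \asymp \frac{e^N}{\sqrt{N}}$ and more generally $\frac{N^k}{k!}$ is, uniformly in $N$, bounded below by $c\, e^N N^{-1/2}$ for $k$ in a window of length $\asymp \sqrt N$ around $N$. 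Writing $c_n = \gamma_n (n+1)^\delta$ and using monotonicity of $(\gamma_n (n+1)^\beta)_n$, on such a window the sequence $c_n$ is comparable (up to constants depending only on $\beta, \delta$) to its value $c_N$ at the center, because both $(n+1)^{\delta-\beta}$ and $\gamma_n(n+1)^\beta$ vary by only a bounded factor across a window of width $O(\sqrt N)$ centred at $N$. Hence, evaluating (ii) at $x = N$,
\[
C e^N \ge \sum_{n=0}^\infty \frac{c_n N^n}{n!} \ge \sum_{|n-N| \le c\sqrt N} \frac{c_n N^n}{n!} \gtrsim c_N \cdot \sqrt N \cdot \frac{e^N}{\sqrt N} = c_N\, e^N,
\]
which yields $c_N = \gamma_N (N+1)^\delta \le C'$ uniformly in $N$, i.e. (i).

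The main technical point to be careful about is the claim that $c_n \asymp c_N$ on the window $|n-N|\le c\sqrt N$. I would handle the two cases of monotonicity symmetrically: writing $c_n = (\gamma_n(n+1)^\beta)\cdot (n+1)^{\delta-\beta}$, the second factor satisfies $(n+1)^{\delta-\beta} \asymp (N+1)^{\delta-\beta}$ on the window since $n/N \to 1$ uniformly there, and the first factor, being monotonic, is squeezed between its values at the endpoints of the window, each of which is again comparable to its value at $N$ by the same argument applied to $\gamma_n(n+1)^\beta = c_n (n+1)^{\beta-\delta}$ and the already-established comparability of the power weight — so a short bootstrap, or simply choosing the window one-sided (say $N \le n \le N + c\sqrt N$ when the sequence is increasing, and $N - c\sqrt N \le n \le N$ when decreasing) and using monotonicity to bound $c_n$ below by $c_N$ directly, avoids any circularity. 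With that comparison in hand the Stirling/Poisson-window lower bound for $\sum \frac{N^n}{n!}$ finishes the proof; this lower bound is itself standard (e.g. from $\frac{N^{N+j}}{(N+j)!} = \frac{N^N}{N!}\prod_{i=1}^{j}\frac{N}{N+i} \ge \frac{N^N}{N!} e^{-j^2/N}$ for $j \ge 0$, summed over $0 \le j \le \sqrt N$, together with $\frac{N^N}{N!}\asymp e^N/\sqrt N$).
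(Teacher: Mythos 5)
Your proof is correct, but the Tauberian step $(ii)\Rightarrow(i)$ follows a genuinely different route from the paper's. The paper first multiplies (ii) by $e^{-ax}$, $a>1$, and integrates over $(0,\infty)$, converting the exponential generating-function bound into an ordinary power-series bound $\sum_n \gamma_n(n+1)^\delta r^{n+1}\le Cr/(1-r)$ for $0<r<1$; taking $r=1-1/n$ this controls the block sum $\sum_{k=n}^{2n}\gamma_k(k+1)^\delta\lesssim n$, and monotonicity of $(\gamma_n(n+1)^\beta)$ then squeezes out the pointwise bound from the block of length $n$. You instead work directly with the exponential bound at $x=N$ and use the local concentration of $N^n/n!$ (the Poisson mass $\gtrsim e^N/\sqrt N$ on a window of width $\asymp\sqrt N$ about $N$, via Stirling), with monotonicity playing the same role of comparing $\gamma_n(n+1)^\delta$ to its value at the centre across a one-sided window. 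The two arguments share the same skeleton (generating-function bound $\Rightarrow$ block-sum bound $\Rightarrow$ pointwise bound via monotonicity), but your version avoids the Laplace-transform step at the cost of the Stirling/Poisson-window estimate, and uses much shorter blocks; the paper's version needs only the geometric series and is slightly more elementary. Your handling of the potential circularity in the window comparison (one-sided window chosen according to the direction of monotonicity) is sound, and the stated inequality $\prod_{i=1}^{j}N/(N+i)\ge e^{-j^2/N}$ is correct; for completeness you would also want to record the analogous left-window estimate for the decreasing case, which is equally routine.
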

\begin{proof}
	(i) $\Longrightarrow$ (ii) is obvious.
	(ii) $\Longrightarrow$ (i)  Let $a>1$. Then multiplying by $e^{-ax}$ and integrating over $(0,\infty)$ one has
	$$\sum_{n=0}^\infty \frac{\gamma_n (n+1)^{\delta}}{n!}\int_0^\infty x^{n}e^{-ax}dx \le C\int_0^\infty e^{-(a-1)x}dx= \frac{C}{a-1}.$$
	Therefore, for $r=1/a$,
	$$\sum_{n=0}^\infty \gamma_n (n+1)^{\delta}r^{n+1}\le C \frac{r}{1-r}, \quad 0<r<1.$$
	Hence
	$$(1- \frac{1}{n})^{2n+1}\sum_{k=n}^{2n} \gamma_k (k+1)^{\delta}\le  Cn, \quad n\in \mathbb N.$$
	
	Assuming that $(\gamma_n (n+1)^{\beta})_n$ is increasing one obtains
	$$\gamma_{n}(n+1)^{\delta +1}\lesssim \gamma_{n }(n+1)^{\beta}\sum_{k=n}^{2n}  (k+1)^{\delta-\beta}\le  \sum_{k=n}^{2n}\gamma_k(k+1)^\delta\lesssim n+1.$$
	
	In case that $(\gamma_n (n+1)^{\beta})_n$ is decreasing we use a similar argument to obtain the result.
\end{proof}

\begin{theorem}\label{th:fromF1toFinf1} Let $\mu$ be a positive measure.
	
	$\mathscr{H}_\mu:F^{1}_\alpha\to F^{\infty,1,\alpha}$ is bounded if and only if $\sup_n \mu_{n}\sqrt{n+1}<\infty$.
	
\end{theorem}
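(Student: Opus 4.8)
The plan is to reduce both implications to known one-variable facts about Taylor coefficients in Fock spaces, exactly as in the proof of Theorem \ref{case1}. First I would observe that, by Theorem \ref{mainteo}, if $\mathscr{H}_\mu:F^1_\alpha\to F^{\infty,1,\alpha}$ is bounded, then (since $F^{\infty,1,\alpha}\subset F^{1}_\alpha$, and in particular $\mathscr{H}_\mu:F^1_\alpha\to F^1_\alpha$ is bounded) the measure satisfies $\mu(0,1)=0$ and $\mathscr{H}_\mu(f)=\sum_{n=0}^\infty \mu_n a_n u_n$ for $f=\sum_{n=0}^\infty a_n u_n\in F^1_\alpha$. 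Conversely, the condition $\sup_n \mu_n\sqrt{n+1}<\infty$ already forces $\sup_n\mu_n<\infty$, hence $\mu(0,1)=0$ by Theorem \ref{mainteo}, so in both directions we may work with the diagonal action on Taylor coefficients.

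For the forward (necessity) direction I would test $\mathscr{H}_\mu$ on the normalized monomials. Since $\|u_n\|_{1,\alpha}$ is essentially $\sqrt{n!/\alpha^n}\,n^{-1/4}$ by (\ref{normaun}), and $\|u_n\|_{\infty,1,\alpha}=\|u_n\|_{1,\alpha}$ by (\ref{eq:mixednorm-monomial}), the estimate $\|\mathscr{H}_\mu(u_n)\|_{\infty,1,\alpha}\le C\|u_n\|_{1,\alpha}$ gives only $\mu_n\le C$, which is too weak. To get the sharp power $\sqrt{n+1}$ I would instead use the improved coefficient estimate (\ref{esti1}) applied to $g=\mathscr{H}_\mu(f)$ together with the dual-type estimate that governs the $F^{\infty,1,\alpha}$-norm from below by a weighted $\ell^1$-sum of coefficients; more precisely, I expect that for $g=\sum b_n u_n$ one has $\|g\|_{\infty,1,\alpha}\gtrsim \sum_n |b_n|\,\|u_n\|_{1,\alpha}(n+1)^{-1/2}\cdot(n+1)^{1/2}$-type control, i.e. the mixed norm $F^{\infty,1,\alpha}$ lies between $F^1_\alpha$ and something that sees the full coefficient size. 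The cleanest route is: $\|\mathscr{H}_\mu(f)\|_{\infty,1,\alpha}\ge c\|\mathscr{H}_\mu(f)\|_{1,\alpha}$ is useless, so instead I would compute $M_\infty(\mathscr{H}_\mu(f),r)$ directly — for $f$ with nonnegative coefficients, $M_\infty(\mathscr{H}_\mu f,r)=\sum_n \mu_n a_n r^n$ — and integrate against $\alpha e^{-\alpha r^2/2}r\,dr$ to produce $\sum_n \mu_n a_n \|u_n\|_{1,\alpha}$ (no $(n+1)^{-1/2}$ loss, since the sup is over the positive real axis where all terms add up). Combining this with (\ref{esti2}), which bounds $\|f\|_{1,\alpha}$ from above by $\sup_n |a_n|\|u_n\|_{1,\alpha}(n+1)^{1/2}$... wait, I need $\|f\|_{1,\alpha}$ controlled, so I use (\ref{esti1}): $\|f\|_{1,\alpha}\ge c\sum_n|a_n|\|u_n\|_{1,\alpha}(n+1)^{-1/2}$ is the wrong direction; the correct input is the upper bound $\|f\|_{1,\alpha}\le C\sum|a_n|\,\|u_n\|_{1,\alpha}$? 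No — one needs a lower bound on $\|f\|_{1,\alpha}$ is false in general. The right pairing is: boundedness gives $\sum_n\mu_n a_n\|u_n\|_{1,\alpha}\le C\|f\|_{1,\alpha}$ for all nonnegative $(a_n)$, and by duality of (\ref{esti1}) — which is precisely (\ref{esti2}) with roles arranged — the functionals $(a_n)\mapsto \sum \mu_n a_n\|u_n\|_{1,\alpha}$ being bounded on $F^1_\alpha$ is equivalent to $(\mu_n\|u_n\|_{1,\alpha}/\|u_n\|_{\infty,\alpha}(n+1)^{-1/2})_n$ being bounded, i.e. $\sup_n\mu_n\sqrt{n+1}<\infty$ after using $\|u_n\|_{1,\alpha}(n+1)^{-1/2}\asymp\|u_n\|_{\infty,\alpha}$.

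For the converse (sufficiency) direction, assuming $\sup_n\mu_n\sqrt{n+1}=:M<\infty$, I would estimate $\|\mathscr{H}_\mu(f)\|_{\infty,1,\alpha}=\sup_{r\ge0}M_1(\mathscr{H}_\mu f,r)e^{-\alpha r^2/2}$. Writing $\mathscr{H}_\mu(f)=\sum\mu_n a_n u_n$, I would bound $M_1(\mathscr{H}_\mu f,r)\le\sum_n\mu_n|a_n|r^n\le M\sum_n (n+1)^{-1/2}|a_n|r^n$, and then use the coefficient estimate (\ref{esti1}) for $F^1_\alpha$ in the form $\sum_n|a_n|\|u_n\|_{1,\alpha}(n+1)^{-1/2}\le C\|f\|_{1,\alpha}$, together with the elementary bound $r^n e^{-\alpha r^2/2}\lesssim\|u_n\|_{\infty,\alpha}\asymp\|u_n\|_{1,\alpha}(n+1)^{-1/2}$... this would give an extra $(n+1)^{-1/2}$, which is more than needed and in fact shows a slightly stronger statement, so I would instead keep just one factor of $(n+1)^{-1/2}$: $M_1(\mathscr{H}_\mu f,r)e^{-\alpha r^2/2}\le M\sum_n|a_n|(n+1)^{-1/2}r^ne^{-\alpha r^2/2}\le M\sum_n|a_n|(n+1)^{-1/2}\|u_n\|_{\infty,\alpha}=M\sum_n|a_n|\|u_n\|_{1,\alpha}(n+1)^{-1}\le CM\|f\|_{1,\alpha}$ using (\ref{esti1}) and absorbing a bounded factor. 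Taking the supremum over $r$ yields $\|\mathscr{H}_\mu(f)\|_{\infty,1,\alpha}\le C\|f\|_{1,\alpha}$. The main obstacle I anticipate is bookkeeping the exact power of $(n+1)$: one must verify that the weight in (\ref{esti1}), the asymptotics $\|u_n\|_{1,\alpha}\asymp\|u_n\|_{\infty,\alpha}\sqrt{n+1}$, and the pointwise bound $r^ne^{-\alpha r^2/2}\lesssim\|u_n\|_{\infty,\alpha}$ combine to give exactly the threshold $\sqrt{n+1}$ and not a neighboring exponent; once the powers are tracked carefully both inequalities are sharp and match. Alternatively, and perhaps more cleanly, one can invoke Lemma \ref{explema} to convert the supremum over $r$ of $\sum\mu_n|a_n|r^n e^{-\alpha r^2/2}$-type expressions into the moment condition $\sup_n\mu_n\sqrt{n+1}<\infty$ directly, which is presumably why that lemma was proved.
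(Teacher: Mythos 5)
Your overall plan (test on functions with nonnegative Taylor coefficients for necessity, use the coefficient estimate (\ref{esti1}) for sufficiency) matches the paper's, but both halves as written have genuine gaps. In the sufficiency direction you estimate $\sup_{r\ge 0}M_1(\mathscr{H}_\mu f,r)e^{-\alpha r^2/2}$, which is the norm of $F^{1,\infty,\alpha}$, not of $F^{\infty,1,\alpha}$: the target norm is $\|g\|_{\infty,1,\alpha}=\alpha\int_0^\infty M_\infty(g,r)e^{-\frac{\alpha}{2}r^2}r\,dr$, and $F^{\infty,1,\alpha}$ is \emph{smaller} than $F^1_\alpha$, whereas $F^1_\alpha\subset F^{1,\infty,\alpha}$ holds trivially, so the inequality you prove says nothing about the theorem. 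Moreover, pulling the supremum over $r$ inside the sum can only control a sup-type norm; for the integral norm you must integrate term by term, using $\alpha\int_0^\infty r^{n}e^{-\frac{\alpha}{2}r^2}r\,dr=(\tfrac{2}{\alpha})^{n/2}\Gamma(\tfrac n2+1)=\|u_n\|_{1,\alpha}$, which gives $\|\mathscr{H}_\mu f\|_{\infty,1,\alpha}\le\sum_n\mu_n|a_n|\,\|u_n\|_{1,\alpha}\le M\sum_n|a_n|\,\|u_n\|_{1,\alpha}(n+1)^{-1/2}\le CM\|f\|_{1,\alpha}$ by (\ref{esti1}). Once the norm is identified correctly this is short and is exactly the paper's argument.

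In the necessity direction your key observation --- that for nonnegative coefficients $\|\mathscr{H}_\mu f\|_{\infty,1,\alpha}=\sum_n\mu_na_n\|u_n\|_{1,\alpha}$ --- is correct, but the concluding step ``by duality of (\ref{esti1}) \dots is equivalent to $\sup_n\mu_n\sqrt{n+1}<\infty$'' is not justified. The estimates (\ref{esti1}) and (\ref{esti2}) are one-sided; from boundedness of the functional $f\mapsto\sum_n\mu_na_n\|u_n\|_{1,\alpha}$ on $F^1_\alpha$ the only coefficient information you get for free on the representing element $g\in F^\infty_\alpha$ is the trivial bound $|b_n|\,\|u_n\|_{\infty,\alpha}\le\|g\|_{\infty,\alpha}$, and since $\|u_n\|_{1,\alpha}\|u_n\|_{\infty,\alpha}\asymp n!/\alpha^n$ this yields only $\sup_n\mu_n<\infty$, losing exactly the factor $\sqrt{n+1}$ at stake. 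The working argument (the paper's) is to test on the kernels $K_\alpha(\cdot,a)$, whose $F^1_\alpha$-norm is exactly $e^{\frac{\alpha}{2}a^2}$; after the substitution $x=\frac{\alpha}{2}a^2$ and Stirling one obtains $\sum_n\mu_n\sqrt{n+1}\,x^{n/2}/\Gamma(\tfrac n2+1)\le Ce^x$, and then the Tauberian Lemma \ref{explema} (applied to the even indices, with monotonicity of $(\mu_n)$ handling the odd ones) extracts $\sup_n\mu_n\sqrt{n+1}<\infty$. You mention Lemma \ref{explema} only as an ``alternative,'' but it is the indispensable step: without it, or an equivalent Abel--Tauber argument, the necessity direction is not proved.
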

\begin{proof}
	Assume that $\mathscr{H}_\mu$ is bounded from $F^1_\alpha$ into $F^{\infty,1,\alpha}$. In particular, there exists $C>0$ such that
	$$
	\|\mathscr{H}_\mu(K_{\alpha}(\cdot, a)\|_{\infty,1,\alpha}\le C e^{\frac{\alpha}{2}a^2},\quad a>0, $$ which gives
	$$
	\alpha\int_0^\infty\left(\sum_{n=0}^\infty \frac{\mu_n \alpha^na^n}{n!}r^ne^{-\frac{\alpha}{2} r^2}\right)r\ dr\le Ce^{\frac{\alpha}{2}a^2}, \quad a>0.$$
	Hence, replacing $\frac{\alpha}{2}a^2 = x$, we obtain
	$$ \sum_{n=0}^\infty \frac{\mu_n x^{n/2}2^n}{n!}\Gamma(\frac{n}{2}+1)\le Ce^x, \quad x>0.$$ In particular
	$$ \sum_{n=0}^\infty \mu_{n}\sqrt{n+1}\frac{x^{n/2}}{\Gamma(\frac{n}{2}+1)}\le Ce^{x}, \quad x>0.$$
	Clearly $\displaystyle \sum_{n=0}^\infty \mu_{2n}\sqrt{n+1}\frac{x^{n}}{n!}\le Ce^{x}$ for all $x>0$
	and  then applying  Lemma \ref{explema} we obtain $\sup_{n\in \mathbb N} \mu_{2n}\sqrt{n+1}<\infty.$ Since the sequence $(\mu_n)$ is decreasing we also obtain $\sup_{n\in \mathbb N} \mu_{2n+1}\sqrt{n+1}<\infty$ and we get the direct implication.
	
	Conversely, we assume that $\mu_n\sqrt{n+1}\le C$ for all $n\ge 0$.
	Using (\ref{esti1})  we have
	\begin{eqnarray*}
		\|\mathscr{H}_\mu(f)\|_{\infty,1,\alpha}&\le & \alpha\int_0^\infty(\sum_{n=0}^\infty \mu_n |a_n|r^n e^{-\frac{\alpha}{2}r^2})rdr\\
		&\le &\sum_{n=0}^\infty \mu_n (\frac{2}{\alpha})^{n/2}|a_n|\Gamma(\frac{n}{2}+1)\\
		&\le & C\sum_{n=0}^\infty \mu_n |a_n|\sqrt{\frac{n!}{\alpha^n}}(n+1)^{1/4}\\
		&\le & C\sum_{n=0}^\infty  |a_n|\sqrt{\frac{n!}{\alpha^n}}(n+1)^{-1/4}\\
&\asymp & C\sum_{n=0}^\infty  |a_n|\|u_n\|_{1,\alpha}(n+1)^{-1/2}\\
		&\le & C \|f\|_{1,\alpha}.
	\end{eqnarray*}
\end{proof}

Let us now get some result for the inclusion $\mathscr{H}_\mu(F^q_\alpha)\subset F^{2,q,\alpha}$ in the case $0<q<2$.
\begin{theorem} \label{main2}Let $\int_0^\infty \frac{d\mu(t)}{t}=1$ and let $0<q< 2$.
	
	(i) If  $\sup_n \mu_n (n+1)^{\frac{2-q}{2q}}<\infty$ then $\mathscr{H}_\mu(F^q_\alpha)\subset F^{2,q,\alpha}$.
	
	(ii) If $\mathscr{H}_\mu(F^q_\alpha)\subset F^{2,q,\alpha}$ then
	$\sup_n \mu_n (n+1)^{\frac{2-q}{4q}}<\infty$.
\end{theorem}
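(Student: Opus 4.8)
The plan is to exploit the mixed-norm identity $\|u_n\|_{2,q,\alpha}=\|u_n\|_{q,\alpha}$ together with the characterizations of $M_2(f,r)$ in terms of Taylor coefficients, since the target space $F^{2,q,\alpha}$ is built from the $L^2$ circular means and hence its norm is essentially a weighted $\ell^2$–$L^q$ expression in the coefficients. Concretely, for $f=\sum a_n u_n$ one has
\[
\|f\|_{2,q,\alpha}^q = \alpha q\int_0^\infty\Big(\sum_{n=0}^\infty |a_n|^2 r^{2n}\Big)^{q/2} e^{-\frac{\alpha q}{2}r^2}\, r\, dr,
\]
and after the substitution $\alpha r^2 = s$ this becomes (up to constants) $\int_0^\infty\big(\sum_n |a_n|^2\alpha^{-n}s^n\big)^{q/2}e^{-\frac{q}{2}s}\,ds$. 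Since $\mathscr H_\mu(f)=\sum_n\mu_n a_n u_n$ under the running hypothesis $\mu(0,1)=0$ (which holds once $\sup_n\mu_n(n+1)^{(2-q)/2q}<\infty$, by Theorem~\ref{mainteo}), the goal in (i) is to bound $\|\mathscr H_\mu(f)\|_{2,q,\alpha}$ by $C\|f\|_{q,\alpha}$.

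For part (i), I would first pass from $F^q_\alpha$ to coefficient information: by (\ref{equa1}) (or rather its $0<q<2$ analogue (\ref{eqpp})), membership $f\in F^q_\alpha$ controls $\|(a_n\sqrt{n!/\alpha^n}(n+1)^{\frac1{2q}-\frac14})\|_{\ell_q}$, but more useful here is the lower bound for $\|f\|_{q,\alpha}$ in terms of $M_2$. The cleanest route: use the Kintchine-type Lemma~\ref{kint}, which gives $\|f\|_{2,q,\alpha}\asymp(\int_0^1\|R_sf\|_{q,\alpha}^q\,ds)^{1/q}$. Since $R_s$ commutes with $\mathscr H_\mu$ and $\|R_s f\|_{q,\alpha}$ relates to $\|f\|$-type quantities, one reduces to showing $\|\mathscr H_\mu g\|_{q,\alpha}\lesssim \|g\|_{q,\alpha}$ after the coefficients have been multiplied by $\mu_n$ — but that is false in general, so instead I would directly estimate: write $\mathscr H_\mu(f)=\int_1^\infty D_{1/t}f\,\frac{d\mu(t)}{t}$, apply Minkowski's integral inequality in $F^{2,q,\alpha}$, and bound $\|D_{1/t}f\|_{2,q,\alpha}$. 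The dilation acts on $M_2(f,r)$ as $M_2(D_{1/t}f,r)=M_2(f,r/t)$, and the growth factor is governed, coefficient-wise, by $\mu_n\le C(n+1)^{-(2-q)/2q}$; combined with the standard asymptotics $\|u_n\|_{2,\alpha}\asymp\|u_n\|_{q,\alpha}(n+1)^{\frac14-\frac1{2q}}$ this yields, via (\ref{eqqq})/(\ref{equa1}), the desired embedding.

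For part (ii), I would test on the reproducing kernels, exactly as in the proof of Theorem~\ref{th:fromF1toFinf1}. Taking $f=K_\alpha(\cdot,a)=\sum_n\frac{\alpha^n a^n}{n!}u_n$ with $a>0$ and $\|f\|_{q,\alpha}=e^{\frac{\alpha}{2q}|a|^2\cdot q /?}$ — more precisely by Proposition~\ref{p1}, $\|K_\alpha(\cdot,a)\|_{q,\alpha}=e^{\frac{\alpha}{2}|a|^2}$ — the boundedness hypothesis gives
\[
\alpha q\int_0^\infty\Big(\sum_{n=0}^\infty \mu_n^2\frac{\alpha^{2n}a^{2n}}{(n!)^2}r^{2n}\Big)^{q/2}e^{-\frac{\alpha q}{2}r^2}r\,dr \le C\, e^{\frac{\alpha q}{2}|a|^2}.
\]
Dropping all but one term in the inner sum and integrating (or, better, keeping the full sum and comparing $\sum_n c_n \frac{x^n}{n!}$-type series against $e^x$ as in Lemma~\ref{explema}) should isolate a single-term bound $\mu_n^q\frac{\alpha^{nq}a^{nq}}{(n!)^q}\cdot(\text{Gamma factor}) \lesssim e^{\frac{\alpha q}{2}a^2}$; substituting $\frac{\alpha}{2}a^2 = x$ and optimizing over $x$ (the optimal $x$ is comparable to $n$) then produces $\mu_n^q(n!)^{-q}(2n/?)^{?}\lesssim 1$, which after Stirling collapses to $\sup_n\mu_n(n+1)^{(2-q)/4q}<\infty$. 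I would carry out the substitution and the $\Gamma$-function/Stirling bookkeeping carefully, since matching the exact exponent $\frac{2-q}{4q}$ (rather than $\frac{2-q}{2q}$) is where the factor-of-two discrepancy between necessary and sufficient conditions comes from, and getting it right requires tracking the $q/2$ power on the circular mean precisely.

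The main obstacle I anticipate is part (i): controlling $\|D_{1/t}f\|_{2,q,\alpha}$ uniformly enough that the integral $\int_1^\infty(\cdots)\frac{d\mu(t)}{t}$ converges under only the stated $\sup$-condition on $\mu_n$. The naive bound from Lemma~\ref{dilation} gives a power of $(1-t^{-2})$ that must be balanced against the coefficient decay of $\mu_n$ expanded as a power series in $t^{-2}$, exactly the mechanism used in Theorem~\ref{teopq}(i); I expect the proof to mirror that computation, with the role of the $\ell^1$-summability of $(n+1)^{\gamma-1}\mu_n$ there replaced here by an $\ell^\infty$-type bound that is just strong enough because the $L^2$ inner means provide an extra half power of $n$ for free.
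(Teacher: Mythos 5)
Your plan for part (i) has a genuine gap. You propose to write $\mathscr H_\mu(f)=\int_1^\infty D_{1/t}f\,\frac{d\mu(t)}{t}$, apply Minkowski's inequality in $F^{2,q,\alpha}$, and bound $\|D_{1/t}f\|_{2,q,\alpha}$ by a multiple of $\|f\|_{q,\alpha}$. But for $q<2$ one has $F^{2,q,\alpha}\subsetneq F^{q,q,\alpha}=F^q_\alpha$ (Proposition \ref{embeddings1}(i)), so the identity $D_{1/1}$ is \emph{unbounded} from $F^q_\alpha$ into $F^{2,q,\alpha}$ and $\|D_{1/t}\|_{F^q_\alpha\to F^{2,q,\alpha}}$ necessarily blows up as $t\to 1^+$. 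Integrating any such blow-up against $\frac{d\mu(t)}{t}$ forces an integrability condition on $\mu$ near $t=1$, i.e.\ a \emph{summability} condition on $(\mu_n)$ of the type appearing in Theorem \ref{teopq}(i) — not the mere $\sup$-condition hypothesized here. (Minkowski also fails outright for $0<q<1$, where $\|\cdot\|_{2,q,\alpha}$ is only a quasi-norm.) So this route cannot close under the stated hypothesis; the "extra half power of $n$ for free" you hope for does not materialize from the dilation estimate. The paper's proof of (i) avoids dilations entirely and is a coefficient-multiplier argument: for $0<q<1$ it checks $\sum_{k\le n}\mu_k^2(k+1)^{2/q}\lesssim n^2$ and invokes the $H^q\to H^2$ multiplier theorem to get $M_2(\mathscr H_\mu f,r)\lesssim M_q(f,r)$ pointwise in $r$; for $q=1$ it quotes Theorem \ref{th:fromF1toFinf1}; for $1<q<2$ it uses $(\sum x_n)^{q/2}\le\sum x_n^{q/2}$ under the radial integral together with the sharp coefficient estimate (\ref{equa1}). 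None of this is present in your proposal.

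Part (ii) is essentially the paper's argument (test on $K_\alpha(\cdot,a)$, substitute $x=\frac{\alpha}{2}a^2$, compare the resulting power series against $e^x$ via the mechanism of Lemma \ref{explema}), but your first option — "dropping all but one term in the inner sum and optimizing over $x$" — does not give the stated exponent: carrying out that computation with Stirling yields only $\mu_n\lesssim (n+1)^{\frac{q-1}{2q}}$, which is strictly weaker than $(n+1)^{-\frac{2-q}{4q}}$ (and even trivial for $q>1$). One must keep the full sum and, crucially, use the concavity of $x\mapsto x^{q/2}$ (Jensen against the probability weights $\frac{\alpha^nr^{2n}e^{-\alpha r^2}}{n!}$) to pull the power $q/2$ inside the sum before integrating; that is the step the paper performs and that your sketch omits. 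With that step supplied, your second option for (ii) does match the paper.
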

\begin{proof}
	(i) Assume that $\sup_n \mu_n (n+1)^{\frac{2-q}{2q}}<\infty.$ Then $\sup_{n}\mu_n<\infty$ and, by Theorem \ref{mainteo}, $\mathscr{H}_\mu$ is bounded from $F^q_\alpha$ into itself. Also $\mathscr{H}_\mu(f)=\sum_{n=0}^\infty \mu_n a_nu_n$ for $f = \sum_{n=0}^\infty a_n u_n\in F^q_\alpha.$
We first deal with the case $0<q<1$.
Notice that
	$$\sum_{k=0}^n \mu_k^2 (k+1)^{2/q}\le C \sum_{k=0}^n (k+1) \le C n^2, \quad n\in \mathbb N$$
	and hence $(\mu_n)$ defines a multiplier from $H^q$ into $H^2$ (see  \cite[Theorem 6.6]{D}) which, due to the fact $(\mathscr{H}_\mu(f))_r=\mathscr{H}_\mu(f_r),$ where $f_r(z) = f(rz),$ implies
	$$M_2(\mathscr{H}_\mu(f), r)\le C M_q(f,r), \quad 0<r<\infty.$$
	Therefore $\mathscr{H}_\mu(F^q_\alpha)\subset F^{2,q,\alpha}$.

The case $q=1$ is a consequence of Theorem \ref{th:fromF1toFinf1} since $\mathscr{H}_\mu(F^1_\alpha)\subset F^{\infty,1,\alpha}\subset F^{2,1,\alpha}$.

Assume now that $1<q<2$ and let $f\in F^q_\alpha$.
\begin{eqnarray*}
\|\mathscr{H}_\mu(f)\|^q_{2,q,\alpha}&\asymp& \int_0^\infty (\sum_{n=0}^\infty \mu_n^2 |a_n|^2 r^{2n} e^{-\alpha r^2})^{q/2} r dr\\
&\asymp& \int_0^\infty (\sum_{n=0}^\infty \mu_n^2 \frac{|a_n|^2}{\alpha^n} s^{n} e^{-s})^{q/2}ds\\
&\le& C\int_0^\infty (\sum_{n=0}^\infty \mu_n^q \frac{|a_n|^q}{\alpha^{nq/2}} s^{nq/2} e^{-sq/2})ds\\
&\le& C\sum_{n=0}^\infty \mu_n^q \frac{|a_n|^q}{\alpha^{nq/2}}(\frac{2}{q})^{nq/2} \Gamma(\frac{nq}{2}+1)\\
&\le& C\sum_{n=0}^\infty \mu_n^q |a_n|^q \Big(\sqrt{\frac{n!}{\alpha^{n}}}\Big)^q(n+1)^{\frac{1}{2}-\frac{q}{4}}.
\end{eqnarray*}

Since $\mu_n^q\le C (n+1)^{q/2-1}$ then using (\ref{equa1}) we get
$$\|\mathscr{H}_\mu(f)\|^q_{2,q,\alpha}\le C \sum_{n=0}^\infty  |a_n|^q \Big(\sqrt{\frac{n!}{\alpha^{n}}}\Big)^q(n+1)^{\frac{q}{4}-\frac{1}{2}}\le C\|f\|^q_{q,\alpha}.$$
	
	(ii) Assume that $\mathscr{H}_\mu(K_\alpha(\cdot,a))\in F^{2,q,\alpha}$ with $$\|\mathscr{H}_\mu(K_\alpha(\cdot,a))\|_{2,q,\alpha}\leq C \|K_\alpha(\cdot,a)\|_{q,\alpha}.$$
	Hence 
	$$\int_0^\infty \left(\sum_{n=0}^\infty \frac{\mu^2_n \alpha^{n} a^{2n}}{n!} \frac{\alpha^n r^{2n}e^{-\alpha r^2}}{n!}\right)^{q/2} r dr\le Ce^{\frac{\alpha q}{2}a^2},\ a > 0.$$

	Using that $0<q\le 2$, $\sum_{n=0}^\infty \frac{\alpha^n r^{2n}e^{-\alpha r^2}}{n!} =1 $ and $\int_0^\infty\frac{\alpha^n r^{2n}e^{-\alpha r^2}}{n!}rdr =\frac{1}{2\alpha}$ we have
	$$ \sum_{n=0}^\infty \left(\frac{\mu^2_{n} (\alpha a^2)^n}{n!}\right)^{q/2} \le Ce^{\frac{\alpha q}{2}a^2}, \quad a>0.$$ Finally
	
	$$ \sum_{n=0}\mu_n^q (n+1)^{\frac{1}{2}-\frac{q}{4}}\frac{x^{nq/2}}{\Gamma(\frac{nq}{2}+1)}\asymp\sum_{n=0}^\infty (\frac{\mu^2_{n} (\frac{2}{q})^n }{n!})^{q/2} x^{nq/2} \le Ce^{x}, \quad x>0.$$

	After multiplying by $e^{-x/r}, r<1,$ and integrating over $(0,\infty)$ we get
	
	$$\sum_{n=0}^\infty \mu^q_{n}(n+1)^{\frac{1}{2}-\frac{q}{4}}r^{\frac{nq}{2}+1} \le C\frac{r}{1-r}, \quad 0<r<1.$$ Arguing as in the proof of Lemma \ref{explema} we get the conclusion.
\end{proof}

\section{Compactness of Hausdorff operators}

 We recall that a linear operator $T:E\to F$ between Fréchet spaces is said to be compact if there exists a neighbourhood $U$ of the origin in $E$ such that $T(U)$ is relatively compact in $F.$

\begin{proposition} Let $\mu \ne 0$ be a measure such that $\sup_n \sqrt[n]{\mu_n}<\infty.$ Then the Hausdorff operator $\mathscr{H}_\mu:H({\mathbb C})\to H({\mathbb C})$ is not compact.
\end{proposition}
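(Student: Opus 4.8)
The plan is to argue by contradiction, exploiting that $\mathscr{H}_\mu$ is diagonal on the monomials: suitably normalized monomials remain inside any prescribed basic neighbourhood of the origin of $H(\mathbb{C})$, yet $\mathscr{H}_\mu$ blows them up far out in the plane. First recall that under the hypothesis $\sup_n\sqrt[n]{\mu_n}<\infty$, Proposition~\ref{prop:continuity-entire} makes $\mathscr{H}_\mu$ a well-defined continuous endomorphism of $H(\mathbb{C})$ with $\mathscr{H}_\mu\big(\sum_n a_n u_n\big)=\sum_n \mu_n a_n u_n$, and in particular $\mathscr{H}_\mu(u_n)=\mu_n u_n$. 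Since $\mu\neq 0$, one can fix $b>0$ with $\mu\big((0,b]\big)>0$; estimating the moment integral only over $(0,b]$, where $t^{-(n+1)}\ge b^{-(n+1)}$, yields the lower bound $\mu_n\ge b^{-(n+1)}\mu\big((0,b]\big)$ for every $n\in\mathbb{N}_0$.

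Now suppose $\mathscr{H}_\mu$ were compact, so that some neighbourhood of $0$ has relatively compact image. Since the compact--open topology of $H(\mathbb{C})$ is generated by the norms $M_\infty(\cdot,r)$, $r>0$, we may shrink this neighbourhood to one of the form $V_{r,\varepsilon}=\{f\in H(\mathbb{C}):M_\infty(f,r)\le \varepsilon\}$ and still have $\overline{\mathscr{H}_\mu(V_{r,\varepsilon})}$ compact. I would then test against $g_n:=\varepsilon\,r^{-n}u_n$, which satisfies $M_\infty(g_n,r)=\varepsilon$, hence $g_n\in V_{r,\varepsilon}$, while $\mathscr{H}_\mu(g_n)=\varepsilon\,\mu_n r^{-n}u_n$. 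Choosing any $R>rb$ and any $z_0$ with $|z_0|=R$, the lower bound on $\mu_n$ gives
$$\big|\mathscr{H}_\mu(g_n)(z_0)\big|=\frac{\varepsilon\,\mu_n R^n}{r^n}\ \ge\ \frac{\varepsilon\,\mu\big((0,b]\big)}{b}\Big(\frac{R}{rb}\Big)^{n}\to\infty\qquad(n\to\infty),$$
because $R/(rb)>1$. On the other hand, point evaluation at $z_0$ is a continuous linear functional on $H(\mathbb{C})$, hence bounded on the relatively compact set $\mathscr{H}_\mu(V_{r,\varepsilon})$, which contains every $\mathscr{H}_\mu(g_n)$; this contradicts the displayed divergence. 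Therefore $\mathscr{H}_\mu:H(\mathbb{C})\to H(\mathbb{C})$ is not compact.

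I do not expect a genuine obstacle here; the only point requiring attention is the quantifier interplay between the radius $r$ defining the chosen neighbourhood, the scale $b$ extracted from the mass of $\mu$, and the evaluation radius $R$ — one must take $R$ strictly larger than $rb$ so that the geometric growth of $u_n$ at $z_0$ overwhelms the (at worst geometric) decay of $\mu_n$. As an alternative to the crude estimate $\mu_n\ge b^{-(n+1)}\mu\big((0,b]\big)$, one could instead use that $\sqrt[n]{\mu_n}\to 1/\delta_0$, with $\delta_0=\sup\{\delta>0:\mu(0,\delta)=0\}$ as in Lemma~\ref{prop:momentos}, and pick $R>r\delta_0$; but this refinement is not needed for the argument.
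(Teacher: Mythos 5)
Your proof is correct and follows essentially the same route as the paper: both arguments reduce compactness to boundedness of the image of a basic neighbourhood $\{f: M_\infty(f,r)\le\varepsilon\}$, test against the normalized monomials $\varepsilon r^{-n}u_n$ (eigenvectors with eigenvalue $\mu_n$), and play the resulting geometric decay of $\mu_n$ against the lower bound $\mu_n\ge b^{-(n+1)}\mu((0,b])$ coming from $\mu\neq 0$. The only difference is organizational — the paper first deduces $\lim_n\sqrt[n]{\mu_n}=0$ and then concludes $\mu=0$, while you run the same estimate in the opposite direction to exhibit explicit divergence at a point $z_0$ with $|z_0|>rb$ — so no further comment is needed.
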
	
\begin{proof}
	Let us assume  $\mathscr{H}_\mu(U)$ is relatively compact for some neighbourhood $U$ of the origin in $H({\mathbb C})$. We will show that
	\begin{equation}\label{eq:cond-necesaria}
		\displaystyle\lim_n \sqrt[n]{\mu_n} = 0.\end{equation}
	There exist $ r > 0$ and $\varepsilon > 0$ such that $$\left\{f\in H({\mathbb C}): \ M_\infty(f,r) \leq \varepsilon \right\}\subset U.$$ Since $\mathscr{H}_\mu(U)$ is a bounded set then
	$$
	C_R:=\sup_{f\in U} M_\infty(\mathscr{H}_\mu f, R) < \infty\ \ \forall R > 0.$$ For every $f\in H({\mathbb C}), f\neq 0,$ we have $\frac{\varepsilon}{M_\infty(f,r)} f\in U,$ hence
		$$
	M_\infty(\mathscr{H}_\mu f, R)\leq C_R \varepsilon^{-1}M_\infty(f,r)\ \ \forall f\in H({\mathbb C})\ \ \forall R > 0.$$ In particular, taking $f = u_n,$ we conclude $\mu_n R^n\leq C_R \varepsilon^{-1}r^n$ for every $n\in {\mathbb N}$ and for every $R > 0,$ from where (\ref{eq:cond-necesaria}) follows. Now for every $0 < a < b$ we observe that
	$$
	\mu_n \geq \int_a^b\frac{d\mu(t)}{t^{n+1}} \geq \frac{\mu\left([a,b]\right)}{b^{n+1}}.$$ From (\ref{eq:cond-necesaria}) we get $\mu\left([a,b]\right) = 0$, leading to a contradiction.
\end{proof}	

\begin{lem}\label{compactinclusion2}Let $\beta < \alpha$ and $0<p,q\le \infty$. Then $F^{p,q,\beta}\subset F^{p,q,\alpha}$ is a compact inclusion.
\end{lem}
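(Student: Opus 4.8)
The plan is to prove this by a normal families argument. First I would record that the inclusion is well defined and bounded: since $e^{-\frac{\alpha q}{2}r^2}\le e^{-\frac{\beta q}{2}r^2}$ we get $\|f\|_{p,q,\alpha}\le (\frac{\alpha}{\beta})^{1/q}\|f\|_{p,q,\beta}$ when $q<\infty$, and $\|f\|_{p,\infty,\alpha}\le \|f\|_{p,\infty,\beta}$ when $q=\infty$. Since $F^{p,q,\alpha}$ is metrizable (a Banach space when $p,q\ge 1$, a complete metric space in general), it suffices to show that every sequence $(f_n)$ in the closed unit ball $B$ of $F^{p,q,\beta}$ admits a subsequence converging in $F^{p,q,\alpha}$.

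Next I would show that $F^{p,q,\beta}$ embeds continuously into $H(\C)$ with the compact--open topology. Indeed, $|f|^p$ is subharmonic for every $f\in H(\C)$ and every $p>0$, so the sub--mean value inequality over the disc $D(z,r)\subset D(0,2r)$ (with $|z|=r$) yields $M_\infty(f,r)\le 4^{1/p}M_p(f,2r)$; together with $M_p(f,2r)\le e^{2\beta r^2}\|f\|_{p,\infty,\beta}$ and $\|f\|_{p,\infty,\beta}\le\|f\|_{p,q,\beta}$ (Proposition \ref{embeddings1}(ii)), this bounds $|f|$ on every compact set by a constant multiple of $\|f\|_{p,q,\beta}$. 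Hence $B$ is a bounded, and therefore (Montel) relatively compact, subset of $H(\C)$; passing to a subsequence we may assume $f_n\to f$ uniformly on compact sets for some $f\in H(\C)$. Since $M_p(f_n,r)\to M_p(f,r)$ for every $r>0$, Fatou's lemma gives $f\in F^{p,q,\beta}$ with $\|f\|_{p,q,\beta}\le 1$, so, replacing $f_n$ by $g_n:=f_n-f$, the problem reduces to: if $g_n\to 0$ uniformly on compact sets and $\sup_n\|g_n\|_{p,q,\beta}<\infty$, then $\|g_n\|_{p,q,\alpha}\to 0$.

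The mechanism behind this last step is the gap $\alpha-\beta>0$, which makes the tails uniformly negligible. Fix $\varepsilon>0$. For $r>R$ one writes $e^{-\frac{\alpha q}{2}r^2}=e^{-\frac{\beta q}{2}r^2}e^{-\frac{(\alpha-\beta)q}{2}r^2}$ and uses $M_p(g_n,r)e^{-\frac{\beta}{2}r^2}\le \|g_n\|_{p,\infty,\beta}\le\|g_n\|_{p,q,\beta}\le M$ to bound $\alpha q\int_R^\infty M_p(g_n,r)^q e^{-\frac{\alpha q}{2}r^2}r\,dr$ by $\frac{\alpha M^q}{\alpha-\beta}e^{-\frac{(\alpha-\beta)q}{2}R^2}$, uniformly in $n$; choose $R$ so that this is $<\varepsilon^q/2$. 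On $[0,R]$, uniform convergence on $\{|z|\le R\}$ gives $\sup_{0\le r\le R}M_p(g_n,r)\le\sup_{|z|\le R}|g_n(z)|\to 0$, hence $\alpha q\int_0^R M_p(g_n,r)^q e^{-\frac{\alpha q}{2}r^2}r\,dr\le \frac{\alpha q R^2}{2}\big(\sup_{|z|\le R}|g_n(z)|\big)^q<\varepsilon^q/2$ for $n$ large, and therefore $\|g_n\|_{p,q,\alpha}<\varepsilon$ eventually. The case $q=\infty$ is handled the same way, replacing the two integrals by $\sup_{r>R}$ and $\sup_{0\le r\le R}$ of $M_p(g_n,r)e^{-\frac{\alpha}{2}r^2}$. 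I do not expect a genuine obstacle here; the only points needing care are the quasi-Banach range $p<1$ or $q<1$, where only a quasi-triangle inequality is available (this affects constants only), and the endpoint bookkeeping when $q=\infty$.
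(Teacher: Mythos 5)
Your proposal is correct and follows essentially the same route as the paper: both arguments show the unit ball of $F^{p,q,\beta}$ is a normal family (via the sub--mean value estimate $M_\infty(f,r)\lesssim M_p(f,2r)$ and the embedding into $F^{p,\infty,\beta}$), and then split the norm integral at a radius $R$ chosen so that the gap $\alpha-\beta$ makes the tail uniformly small, with Montel's theorem handling the part on $[0,R]$. The only cosmetic difference is that you conclude by extracting a convergent subsequence while the paper exhibits a finite $\varepsilon$-net; both are standard reformulations of relative compactness.
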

\begin{proof} We will only discuss the case $q<\infty,$ since the case $q=\infty$ is quite similar.
	Let $B$ denote the closed unit ball in $F^{p,q,\beta}.$  Let us see that $B$ is a bounded set in the Fréchet space $H({\mathbb C}).$ Indeed, from \cite[Theorem 5.9]{D} there exists $C> 0$ such that
	$$
	M_\infty(f,r)\leq CM_p(f,2r) , \quad r>0$$ and then for a given $r>0$ we have, using the inclusion $F^{p,q,\beta}\subset F^{p,\infty,\beta}$,
	$$\sup_{f\in B} M_\infty(f,r) \le C \sup_{f\in B} M_p(f,2r)\le C \sup_{f\in B} \|f\|_{p,\infty,\beta}e^{2\beta r^2} \le Ce^{2\beta r^2}.$$
	
	For every $\varepsilon > 0$ we choose $R > 0$ such that
	$$
	e^{-q\frac{R^2}{2}(\alpha - \beta)} \leq \frac{\varepsilon}{2^{q+1}}.$$ (In the case $q = \infty$ we choose $R$ so that $e^{-\frac{R^2}{2}(\alpha - \beta)} \leq \frac{\varepsilon}{4}$). Then for every $f\in 2B$ we have
	$$
	\|f\|^q_{p,q,\alpha} = \alpha q\int_0^\infty M^q_p(f,r) e^{-q\frac{r^2}{2}\beta}e^{-q\frac{r^2}{2}(\alpha - \beta)}rdr.$$ Hence
	$$
	\|f\|^q_{p,q,\alpha} \leq \alpha q\int_0^R M^q_p(f,r)e^{-q\frac{\alpha}{2}r^2} rdr + \frac{\varepsilon}{2}\le CM_p^q(f,R)+\frac{\varepsilon}{2}$$ for every $f\in 2B.$  Using now that $B$ is bounded in $H({\mathbb C})$, by Montel's theorem we can find a finite subset ${\mathcal F}\subset B$ such that for every $f\in B$ there exists $g\in {\mathcal F}$ satisfying $M^q_\infty(f-g, R) \leq \frac{\varepsilon}{2C}.$ Since $f-g\in 2B$ we can use the previous estimates to conclude
	$$
	\|f - g\|^q_{p,q,\alpha}\leq CM_\infty^q(f-g,R) +  \frac{\varepsilon}{2} \leq \varepsilon.$$ Consequently $B$ is relatively compact in $F^{p,q,\alpha}.$
\end{proof}

\begin{rmk}{\rm Let $0 < \beta < \alpha$ and $\lambda > 1$ so that $\lambda^2 \beta = \alpha.$ We consider the measure $\mu = \lambda \delta_\lambda.$ Then
		$$
		\left(\mathscr{H}_\mu f\right)(z) = \int_0^\infty f\left(\frac{z}{t}\right)\frac{d\mu(t)}{t} = f\left(\frac{z}{\lambda}\right).$$ Since
		$$
		\|f\|_{p,\infty,\alpha} = \sup_{r > 0}M_p(f,\frac{r}{\lambda})e^{-\frac{\beta}{2}r^2}$$ it turns out that $\mathscr{H}_\mu:F^{p,\infty,\alpha}\to F^{p,\infty,\beta}$ is an isomorphism. Consequently, the compactness of the inclusion map $J: F^{p,\infty,\beta}\to F^{p,\infty,\alpha}$ is equivalent to the compactness of $\mathscr{H}_\mu:F^{p,\infty,\alpha}\to F^{p,\infty,\alpha}.$
	}

\end{rmk}

\begin{theorem}\label{compact4}
	Let $0<p,q\le\infty$ and $\mu$ be a positive Borel measure with  $\int_0^\infty\frac{d\mu(t)}{t}\ dt =1.$

Then $\mathscr{H}_\mu: F^{p,q,\alpha}\to F^{p,q,\alpha}$ is compact if and only if $\mu((0,1])=0$.
\end{theorem}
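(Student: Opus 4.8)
The plan is to prove the two implications separately, establishing first that $\mu((0,1])=0$ is sufficient for compactness, and then deducing its necessity from the sufficiency by an elementary argument with the identity operator.

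\emph{Sufficiency.} Assume $\mu((0,1])=0$; I will exhibit $\mathscr{H}_\mu$ as an operator-norm limit of compact operators on $F^{p,q,\alpha}$. The key point is that restricting $\mu$ to an interval bounded away from $t=1$ turns $\mathscr{H}_\mu$ into a smoothing operator. Fix $\delta>0$, choose $a\in(1,1+\delta)$, and split $\mu=\mu'+\mu''$ with $\mu'=\mu|_{(1,1+\delta]}$ and $\mu''=\mu|_{(1+\delta,\infty)}$, so $\mathscr{H}_\mu=\mathscr{H}_{\mu'}+\mathscr{H}_{\mu''}$. A change of variables $t=as$ gives $\mathscr{H}_{\mu''}f=\frac1a\mathscr{H}_{\sigma}(D_{1/a}f)$, where $\sigma$ is the push-forward of $\mu''$ under $t\mapsto t/a$; since $a<1+\delta$, the measure $\sigma$ is supported in $((1+\delta)/a,\infty)\subset(1,\infty)$, so $\sigma((0,1])=0$, and its moments are finite. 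Using the dilation identity $\|D_{1/a}g\|_{p,q,\gamma}=\|g\|_{p,q,\gamma a^{2}}$, obtained by the substitution $r=as$ in Definition~\ref{Z2}, the map $D_{1/a}$ is an isometric bijection of $F^{p,q,\alpha}$ onto $F^{p,q,\alpha/a^{2}}$, while $\mathscr{H}_{\sigma}$ is bounded on $F^{p,q,\alpha/a^{2}}$ by Theorem~\ref{mainteo}(vi) (applied to the normalization of $\sigma$; trivial if $\sigma=0$). Composing with the compact inclusion $F^{p,q,\alpha/a^{2}}\hookrightarrow F^{p,q,\alpha}$ of Lemma~\ref{compactinclusion2} (legitimate since $\alpha/a^{2}<\alpha$) shows that $\mathscr{H}_{\mu''}:F^{p,q,\alpha}\to F^{p,q,\alpha}$ is compact. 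For the remaining piece, the proof of Theorem~\ref{mainteo}(vi) yields $\|\mathscr{H}_\rho\|_{F^{p,q,\alpha}\to F^{p,q,\alpha}}\le 1$ for every positive Borel measure $\rho$ supported in $(1,\infty)$ with $\int_0^\infty t^{-1}d\rho(t)=1$; applied to the normalization of $\mu'$ this gives $\|\mathscr{H}_{\mu'}\|\le\int_{(1,1+\delta]}t^{-1}d\mu(t)\le\mu((1,1+\delta])$. Since $\mu((1,2])\le 2\int_{(1,2]}t^{-1}d\mu(t)<\infty$ and $\bigcap_{\delta>0}(1,1+\delta]=\varnothing$, continuity from above gives $\mu((1,1+\delta])\to 0$ as $\delta\to 0^{+}$. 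Hence $\|\mathscr{H}_\mu-\mathscr{H}_{\mu''}\|\to 0$ with each $\mathscr{H}_{\mu''}$ compact, so $\mathscr{H}_\mu$ is compact (compact operators form a closed subspace of the bounded operators, also on the complete quasi-Banach spaces $F^{p,q,\alpha}$). Observe that this argument used only that $\mu$ is a positive Borel measure with $\mu((0,1])=0$ and $\int_0^\infty t^{-1}d\mu(t)<\infty$.

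\emph{Necessity.} Suppose $\mathscr{H}_\mu:F^{p,q,\alpha}\to F^{p,q,\alpha}$ is compact, hence bounded; by the embeddings in Proposition~\ref{embeddings1} and the implication (iii)$\Rightarrow$(iv) of Theorem~\ref{mainteo} we obtain $\mu((0,1))=0$. Write $c=\mu(\{1\})\ge 0$, so that $\mu=c\,\delta_1+\mu|_{(1,\infty)}$; since $\mathscr{H}_{\delta_1}$ is the identity, $\mathscr{H}_\mu=c\,\mathrm{Id}+\mathscr{H}_{\mu|_{(1,\infty)}}$. The measure $\mu|_{(1,\infty)}$ satisfies the hypotheses of the sufficiency part, so $\mathscr{H}_{\mu|_{(1,\infty)}}$ is compact, and therefore $c\,\mathrm{Id}=\mathscr{H}_\mu-\mathscr{H}_{\mu|_{(1,\infty)}}$ is compact on $F^{p,q,\alpha}$. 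Since $F^{p,q,\alpha}$ is infinite dimensional (the monomials $u_n$ are linearly independent and $\|u_n\|_{p,q,\alpha}=\|u_n\|_{q,\alpha}<\infty$), the identity is not compact, which forces $c=0$; thus $\mu((0,1])=\mu((0,1))+\mu(\{1\})=0$.

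The main obstacle is the opening step of the sufficiency argument: recognizing that $\mathscr{H}_{\mu|_{(1+\delta,\infty)}}$ factors through the dilation $D_{1/a}$ into the strictly smaller Fock space $F^{p,q,\alpha/a^{2}}$, thereby reducing its compactness to Lemma~\ref{compactinclusion2} together with the already-proved boundedness criterion of Theorem~\ref{mainteo}, all while arranging that the push-forward measure $\sigma$ remains supported away from $t=1$. The rest is routine, although a little care is needed in the quasi-Banach range $p<1$ or $q<1$ and with the elementary measure-theoretic limits.
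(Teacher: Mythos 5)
Your proof is correct, and it differs from the paper's in both directions in ways worth noting. For sufficiency, the overall architecture is the same (approximate $\mathscr{H}_\mu$ in operator norm by the operator with measure restricted away from $t=1$, and show that restricted operator factors through the compact inclusion of Lemma \ref{compactinclusion2}), but your way of producing the factorization is cleaner: instead of the paper's direct uniform estimate of $\|D_{1/t}f\|_{p,q,\beta}$ for $t\ge\delta$ (obtained by splitting the defining integral at $r=tA$ and analyzing $t^2e^{-q\beta t^2r^2/2}$), you conjugate by a single dilation $D_{1/a}$, use the exact identity $\|D_{1/a}g\|_{p,q,\gamma}=\|g\|_{p,q,\gamma a^2}$ to land isometrically in $F^{p,q,\alpha/a^2}$, and then invoke the already-proved boundedness criterion of Theorem \ref{mainteo}(vi) for the push-forward measure $\sigma$ on that smaller space. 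This trades a technical integral estimate for bookkeeping about supports and moments of $\sigma$, all of which you handle correctly. For necessity, your argument is genuinely different: the paper extracts $\mu(\{1\})=\lim_n\mu_n=0$ from the eigenvector relation $\mathscr{H}_\mu(u_n)=\mu_n u_n$ and a convergent subsequence of $\mu_n u_n/\|u_n\|_{q,\alpha}$, whereas you observe that $\mathscr{H}_\mu-\mathscr{H}_{\mu|_{(1,\infty)}}=\mu(\{1\})\,\mathrm{Id}$, apply your sufficiency argument (which, as you rightly note, needs only $\mu((0,1])=0$ and finite mass, not the normalization) to conclude $\mathscr{H}_{\mu|_{(1,\infty)}}$ is compact, and then use that the identity on the infinite-dimensional space $F^{p,q,\alpha}$ is not compact (Riesz's theorem, valid for quasi-Banach spaces). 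Your route is more structural and reuses the sufficiency half; the paper's is more self-contained and yields the quantitative fact $\mu_n\to0$ as a by-product. Both are sound, including in the quasi-Banach range, where you correctly flag that the set of compact operators is still closed under operator-norm limits.
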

\begin{proof}
	Assume that $\mathscr{H}_\mu: F^{p,q,\alpha}\to F^{p,q,\alpha}$ is compact. Then $\mu(0,1)=0$ due to Theorem \ref{mainteo}.  Taking $f_n=\frac{u_n}{\|u_n\|_{q,\alpha}}$ and using that $\mathscr{H}_\mu(f_n)=\mu_n f_n$  then there exist an increasing sequence $(n_k)\subset {\mathbb N}$ and $g\in F^{p,q,\alpha}$ such that $\mu_{n_k}f_{n_k}\to g$ in $F^{p,q,\alpha}.$ Hence $\mu_{n_k}\to \|g\|_{p,q,\alpha}.$ Since
	$f_{n_k}(z)$ converges pointwise to $0$ and $(\mu_n)$ is decreasing one has that $\mu_n\to 0$ as $n\to \infty.$ Taking into account that $$\mu(\{1\})=\lim_{n\to \infty}\int_{[1,\infty)} \frac{d\mu(t)}{t^{n+1}}= \lim_{n\to\infty} \mu_n$$
	we obtain that $\mu(\{1\})=0.$ Therefore $\mu((0,1])=0$.
\par\medskip 	
Assume now that $\mu((0,1])=0$. Hence, from Theorem \ref{mainteo},  $\mathscr{H}_\mu(f)(z)=\int_1^\infty f(z/t)\frac{d\mu(t)}{t}$ and it is bounded on $F^{p,q,\alpha}$. Consider, for $\delta > 1,$
$$T_\delta(f)(z)=\int_\delta^\infty f\left(\frac{z}{t}\right) \frac{d\mu(t)}{t}.$$ We put $\beta = \frac{\alpha}{\delta^2} < \alpha$ and $A = \sqrt{\frac{2}{q\beta}}.$ Now for each $f\in F^{p,q,\alpha}$ and $t\ge \delta$ then
	$$
	\begin{array}{*2{>{\displaystyle}l}}
	\|D_{1/t} f\|^q_{p,q,\beta}& = q\beta\int_0^\infty M^q_p( f, \frac{r}{t})e^{ - q\frac{\beta}{2}r^2}r\ dr\\ & \\
	&\le q\beta\int_0^{tA}M^q_p( f, \frac{r}{t})e^{ - q\frac{\beta}{2}r^2}r\ dr + q\beta \int_A^\infty M^q_p(f,r)\sup_{t\ge \delta} t^2e^{ -q\frac{\beta t^2}{2}r^2}r\ dr
	\end{array}$$
	In the second integral we observe that $r\geq A\geq \frac{A}{\delta}$ implies that $\Psi(t):=t^2e^{-\frac{q\beta t^2}{2}r^2}$ decreases on $[\delta, \infty),$ hence
	$$
	\sup_{t\ge \delta} t^2e^{ -q\frac{\beta t^2}{2}r^2} = \delta^2 e^{-\frac{q\alpha}{2}r^2}.$$ On the other hand, the first integral is less than or equal to
	$$
	q\beta M_p(f,A)^q \int_0^\infty e^{-\frac{q\beta}{2}r^2}r\ dr \leq C\|f\|_{p,\infty,\alpha}^q$$ for some constant $C > 0,$ which only depends on $\alpha, \delta, q.$ Summarizing, we conclude that
	$$
	\|D_{1/t} f\|^q_{p,q,\beta} \leq C\|f\|_{p,\infty,\alpha}^q + \delta^2\|f\|_{p,q,\alpha}^q.$$ Since $F^{p,q,\alpha} \subset F^{p,\infty,\alpha}$ with continuous inclusion we conclude that $D_{1/t} f \in F^{p,q,\beta}$ and $\|D_{1/t} f\|_{p,q,\beta} \leq D \|f\|_{p,q,\alpha}$ for some constant $D$ independent on $t\geq \delta.$

Let $\gamma=\min\{p,q, 1\}$. Since $F^{p,q,\alpha}$ is $\gamma$-Banach space then
$$
\begin{array}{*2{>{\displaystyle}l}}
\|T_\delta(f)\|^\gamma_{p,q,\beta}& \le \int_\delta^\infty \|D_{1/t}f\|^\gamma_{p,q,\beta}\frac{d\mu(t)}{t}\\ & \\
&\lesssim \|f\|^\gamma_{p,q,\alpha}\int_\delta^\infty \frac{d\mu(t)}{t}.
\end{array}$$ Therefore $T_\delta: F^{p,q,\alpha}\to F^{p,q, \beta}$ is bounded and since the inclusion $F^{p,q,\beta}\subset F^{p,q,\alpha}$ is compact due to Lemma \ref{compactinclusion2} we obtain that also $T_\delta:F^{p,q,\alpha}\to F^{p,q,\alpha}$ is compact.

Finally, we recall that $\|D_{1/t}\|_{F^{p,q,\alpha}\to F^{p,q,\alpha}} = 1$ for $t > 1$ and use the following estimate:
$$\|\mathscr{H}_\mu(f)- T_\delta(f)\|^\gamma_{p,q, \alpha}\le \int_1^\delta \|D_{1/t}f\|^\gamma_{p,q, \alpha}\frac{d\mu(t)}{t}\le \|f\|^\gamma_{p,q,\alpha}\mu([1,\delta]).$$

Taking limits as $\delta\to 1$ we obtain the compactness of $\mathscr{H}_\mu$ from $F^{p,q,\alpha}$ into itself.
This completes the result.	
	
\end{proof}	

For the Fock spaces $F^p_\alpha$ we have the following result. It improves \cite[Theorem 1.2]{G-S}, were only the case $1 < p = q < \infty$ was considered.

\begin{theorem}\label{th:compact}
	Let $\mu$ be a Borel measure with  $\int_0^\infty\frac{d\mu(t)}{t}\ dt =1.$ The following are equivalent:
\begin{itemize}	
\item[(i)] There exists $0<p,q \le \infty$ such that $\mathscr{H}_\mu: F^p_\alpha\to F^q_\alpha$ is compact.
\item[(ii)] $\mu((0,1]) = 0.$	
\item[(iii)] $\mathscr{H}_\mu: F^p_\alpha\to F^p_\alpha$ is compact for all $0<p\le \infty.$
\end{itemize}
\end{theorem}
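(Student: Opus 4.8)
The plan is to prove the equivalences in a cycle, reducing everything to the already-established case of the mixed-norm spaces $F^{p,q,\alpha}$ in Theorem \ref{compact4} together with the sandwiching inclusions from Corollary \ref{embeddings1}. First I would prove $(ii)\Longrightarrow(iii)$: this is immediate, since $F^p_\alpha = F^{p,p,\alpha}$, so Theorem \ref{compact4} applied with $q=p$ gives that $\mathscr{H}_\mu:F^p_\alpha\to F^p_\alpha$ is compact for every $0<p\le\infty$ as soon as $\mu((0,1])=0$. Then $(iii)\Longrightarrow(i)$ is trivial (pick any $p$ and take $q=p$).

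The real content is $(i)\Longrightarrow(ii)$. Suppose $\mathscr{H}_\mu:F^p_\alpha\to F^q_\alpha$ is compact for some pair $p,q$. A compact operator is bounded, so by Corollary \ref{hauss1p} (or Theorem \ref{mainteo}) we already get $\mu(0,1)=0$, hence $\mathscr{H}_\mu(f)(z)=\sum_n \mu_n a_n z^n$ with $(\mu_n)$ a decreasing nonnegative sequence, and it remains only to show $\mu(\{1\})=0$, equivalently $\mu_n\to 0$. Here I would distinguish the cases $p\le q$ and $p>q$. If $p\le q$, then $F^{\infty,p,\alpha}\subset F^p_\alpha$ and $F^q_\alpha\subset F^{q,\infty,\alpha}$ with continuous inclusions, so $\mathscr{H}_\mu:F^{\infty,p,\alpha}\to F^{q,\infty,\alpha}$ is compact, and I would run the normalized-monomial argument exactly as in the first half of the proof of Theorem \ref{compact4}: setting $f_n = u_n/\|u_n\|_{p,\alpha}$ one has $\|f_n\|_{\infty,p,\alpha}=1$ and $\mathscr{H}_\mu(f_n)=\mu_n f_n$; compactness yields a subsequence $\mu_{n_k}f_{n_k}\to g$ in $F^{q,\infty,\alpha}$, and since $f_{n_k}\to 0$ uniformly on compact sets while $(\mu_n)$ is decreasing, this forces $\mu_n\to 0$, hence $\mu(\{1\})=\lim_n\mu_n=0$. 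The case $p>q$ is even easier because then $F^p_\alpha\subset F^q_\alpha$, so compactness of $\mathscr{H}_\mu:F^p_\alpha\to F^q_\alpha$ together with boundedness of $\mathscr{H}_\mu:F^q_\alpha\to F^q_\alpha$... actually cleaner: just observe $F^{\infty,q,\alpha}\subset F^p_\alpha$ (since $F^{\infty,q,\alpha}\subset F^q_\alpha\subset F^p_\alpha$ when $q\le$ ... no) — so I would simply note that in all cases the normalized monomials $f_n=u_n/\|u_n\|_{q,\alpha}$ form a bounded sequence in $F^p_\alpha$ when $p\ge q$, since $\|u_n\|_{p,\alpha}\le\|u_n\|_{q,\alpha}$... this needs $p\ge q$ to be false, so let me instead just always embed through mixed spaces: $F^{\infty,p,\alpha}\subset F^{\min(p,q)}_\alpha \subset \dots$; the robust route is to note $\mathscr{H}_\mu: F^{\infty,r,\alpha}\to F^{r,\infty,\alpha}$ is compact for $r=\min(p,q)$ using $F^{\infty,r,\alpha}\subset F^{p}_\alpha$ and $F^q_\alpha\subset F^{r,\infty,\alpha}$ (both valid since $r\le p$ and $r\le q$), and then the monomial argument with $f_n=u_n/\|u_n\|_{r,\alpha}$ gives $\mu_n\to 0$.

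So concretely the write-up is: $(ii)\Rightarrow(iii)$ by Theorem \ref{compact4}; $(iii)\Rightarrow(i)$ trivial; $(i)\Rightarrow(ii)$ by first invoking Corollary \ref{hauss1p} to get $\mu(0,1)=0$ and the diagonal form of $\mathscr{H}_\mu$, then sandwiching $\mathscr{H}_\mu$ between $F^{\infty,r,\alpha}$ and $F^{r,\infty,\alpha}$ with $r=\min\{p,q\}$ and repeating the normalized-eigenfunction/pointwise-convergence argument of Theorem \ref{compact4} to conclude $\mu(\{1\})=0$, whence $\mu((0,1])=0$. I expect the only mild subtlety to be bookkeeping the chain of continuous inclusions correctly so that the compact operator genuinely factors through $F^{\infty,r,\alpha}\to F^{r,\infty,\alpha}$; once that is set up, the spectral argument (eigenvalues $\mu_n$ of a compact operator must tend to $0$, combined with monotonicity of $(\mu_n)$ to upgrade ``$\liminf$'' to ``$\lim$'' and identify the limit with $\mu(\{1\})$) is routine and already appears in the proof of Theorem \ref{compact4}.
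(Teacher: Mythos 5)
Your reduction of (ii)$\Rightarrow$(iii) to Theorem \ref{compact4}, the triviality of (iii)$\Rightarrow$(i), and the first step of (i)$\Rightarrow$(ii) --- deducing $\mu(0,1)=0$ from boundedness via Theorem \ref{mainteo} and the sandwich $F^{\infty,p,\alpha}\subset F^p_\alpha$, $F^q_\alpha\subset F^{q,\infty,\alpha}$ --- are all fine. The case $p\ge q$ is also unproblematic (and is cleaner than you make it): there $F^q_\alpha\subset F^p_\alpha$, so $\mathscr{H}_\mu:F^p_\alpha\to F^p_\alpha$ is itself compact and Theorem \ref{compact4} already yields $\mu((0,1])=0$.

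The gap is in the remaining case $p<q$, exactly where you rerun the normalized-monomial argument after factoring through $F^{\infty,r,\alpha}\to F^{r,\infty,\alpha}$ with $r=\min\{p,q\}=p$. Put $f_n=u_n/\Vert u_n\Vert_{r,\alpha}$. By (\ref{eq:mixednorm-monomial}) one has $\Vert f_n\Vert_{\infty,r,\alpha}=1$, but in the \emph{target} space $\Vert f_n\Vert_{r,\infty,\alpha}=\Vert u_n\Vert_{\infty,\alpha}/\Vert u_n\Vert_{r,\alpha}\asymp (n+1)^{-1/(2r)}\to 0$ by (\ref{normaun}). Hence from $\mu_{n_k}f_{n_k}\to g$ in $F^{r,\infty,\alpha}$ you only learn that $\mu_{n_k}\Vert f_{n_k}\Vert_{r,\infty,\alpha}\to\Vert g\Vert_{r,\infty,\alpha}$, which forces $g=0$ for free (since $(\mu_n)$ is bounded) and gives no information at all about $\lim_k\mu_{n_k}$. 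The argument in Theorem \ref{compact4} works only because domain and codomain coincide, so that $\Vert\mathscr{H}_\mu(f_{n_k})\Vert=\mu_{n_k}$ exactly; once the operator lands in a strictly larger space, eigenfunctions normalized in the domain shrink in the codomain and the spectral argument collapses. This is precisely why the paper abandons the monomials in this implication: it composes with the inclusion into $F^\infty_\alpha$ and tests against the normalized reproducing kernels $f_n(z)=e^{nz-\frac{n^2}{2\alpha}}$, which by Proposition \ref{p1} have norm $1$ in \emph{every} $F^p_\alpha$ simultaneously and so lose nothing under the inclusions. Since these are not eigenfunctions one cannot read off $\mu_n$ directly; instead the paper exploits positivity of $\mu$ to get the pointwise lower bound $\mathscr{H}_\mu(f_n)(x)\ge \mu(\{1\})e^{nx-\frac{n^2}{2\alpha}}$ and evaluates at $x=n/\alpha$ to force $\mu(\{1\})=0$. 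Your proof needs a substitute of this kind --- a bounded sequence in $F^p_\alpha$ whose images under a compact $\mathscr{H}_\mu$ would not tend to $0$ in $F^q_\alpha$ unless $\mu(\{1\})=0$ --- before it closes.
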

\begin{proof} By Theorem \ref{compact4} only (i) $\Longrightarrow$ (ii) needs a proof.
	Assume then that $\mathscr{H}_\mu: F^p_\alpha\to F^\infty_\alpha$ is compact. In particular $\mathscr{H}_\mu$ is bounded  and then $\mu(0,1) = 0.$ Let $B$ denote the closed unit ball in $F^p_\alpha.$ From compactness we obtain that the topology of $F^\infty_\alpha$ coincides with the compact-open topology on $\mathscr{H}_\mu(B).$ Now take $f_n\in B$ defined by
	$$
	f_n(z) = e^{nz - \frac{n^2}{2\alpha}},\ n\in {\mathbb N}.$$ Since
	$$
	\begin{array}{*2{>{\displaystyle}l}}
	\left|\mathscr{H}_\mu(f_n)(z)\right| & \leq  \int_1^\infty\left|e^{\frac{nz}{t}-\frac{n^2}{2\alpha}}\right|\frac{d\mu(t)}{t}\\ & \\ & \leq  e^{n|z|-\frac{n^2}{2\alpha}}\int_1^\infty\frac{d\mu(t)}{t}
	\end{array}$$ then $\mathscr{H}_\mu(f_n)$ goes to $0$ in the compact-open topology, hence in $F^\infty_\alpha,$ as $n\to \infty.$ In particular
	$$
	\lim_{n\to \infty}\left(\sup_{x>0}\left|\mathscr{H}_\mu(f_n)(x)\right|e^{-\frac{\alpha}{2}x^2}\right) = 0.$$ Since
	$$
	\begin{array}{*2{>{\displaystyle}l}}
	\mathscr{H}_\mu(f_n)(x) & = \int_1^\infty e^{\frac{nx}{t}-\frac{n^2}{2\alpha}}\frac{d\mu(t)}{t} = e^{nx-\frac{n^2}{2\alpha}}\cdot \mu\left(\{1\}\right) + \int_{(1,\infty)}e^{\frac{nx}{t}-\frac{n^2}{2\alpha}}\frac{d\mu(t)}{t}\\ & \\ & \geq e^{nx-\frac{n^2}{2\alpha}}\cdot \mu\left(\{1\}\right)\end{array}$$ then
	$$
	\lim_{n\to \infty}\left(\sup_{x>0}e^{nx-\frac{n^2}{2\alpha}}e^{-\frac{\alpha}{2}x^2}\right)\mu\left(\{1\}\right) = 0.$$ We consider the sequence $x_n = \frac{n}{\alpha}$ to finally conclude that $\mu\left(\{1\}\right) = 0.$
\end{proof}

\begin{corollary}  Let  $\mu$ be a positive Borel measure on $(0,\infty)$. 

(i) If $0<p<q\leq \infty$  and $\mathscr{H}_\mu:F^q_\alpha\to F^p_\alpha$ is bounded then $\mathscr{H}_\mu:F^q_\alpha\to F^q_\alpha$ is compact.

(ii) If $0<q<2$  and $\mathscr{H}_\mu:F^q_\alpha\to F^{2,q,\alpha}$ is bounded then $\mathscr{H}_\mu:F^q_\alpha\to F^q_\alpha$ is compact.

\end{corollary}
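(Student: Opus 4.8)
The plan is to reduce both statements to the compactness criterion $\mu((0,1])=0$ of Theorem~\ref{th:compact} (equivalently Theorem~\ref{compact4} with both indices equal to $q$), by showing that in each case the moment sequence $(\mu_n)$ tends to $0$. First I would dispose of trivialities: if $\mu=0$ then $\mathscr{H}_\mu=0$ is compact, and otherwise $\mu_0=\int_0^\infty\frac{d\mu(t)}{t}$ lies in $(0,\infty)$ because $\mathscr{H}_\mu$ must be defined on constants; since replacing $\mu$ by $\mu/\mu_0$ merely rescales $\mathscr{H}_\mu$ and alters neither hypotheses nor conclusions, I may assume $\mu_0=1$ and so apply Theorems~\ref{mainteo}, \ref{th:compact} and \ref{main2} freely. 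Then I would isolate the part common to (i) and (ii): in case (i) the continuous inclusion $F^p_\alpha\subset F^q_\alpha$, and in case (ii) the continuous inclusion $F^{2,q,\alpha}\subset F^{q,q,\alpha}=F^q_\alpha$ given by Proposition~\ref{embeddings1}(i) (valid since $q<2$), show that $\mathscr{H}_\mu:F^q_\alpha\to F^q_\alpha$ is bounded, whence $\mu(0,1)=0$ by Theorem~\ref{mainteo}. Consequently $\mu_n=\int_{[1,\infty)}t^{-(n+1)}\,d\mu(t)$ is non-increasing, and dominated convergence (dominating function $t^{-1}\in L^1(\mu)$) gives $\lim_n\mu_n=\mu(\{1\})$; so it suffices to prove $\mu_n\to 0$, for then $\mu((0,1])=\mu(0,1)+\mu(\{1\})=0$ and Theorem~\ref{th:compact} yields the compactness of $\mathscr{H}_\mu:F^q_\alpha\to F^q_\alpha$.

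For (i) this decay is immediate: evaluating the estimate $\|\mathscr{H}_\mu(f)\|_{p,\alpha}\le C\|f\|_{q,\alpha}$ on $f=u_n$ and using $\mathscr{H}_\mu(u_n)=\mu_n u_n$ gives $\mu_n\le C\,\|u_n\|_{q,\alpha}/\|u_n\|_{p,\alpha}$, and by the asymptotics in (\ref{normaun}) this quotient is comparable to $n^{\frac{1}{2q}-\frac{1}{2p}}$ (with the convention $\frac1q=0$ when $q=\infty$), which tends to $0$ since $p<q$. For (ii) the monomials carry no information, because $\|u_n\|_{2,q,\alpha}=\|u_n\|_{q,\alpha}$ by (\ref{eq:mixednorm-monomial}); instead I would invoke the necessary condition of Theorem~\ref{main2}(ii), which (using the normalization $\mu_0=1$) gives $\sup_n\mu_n(n+1)^{\frac{2-q}{4q}}<\infty$, and since $\frac{2-q}{4q}>0$ for $0<q<2$ this again forces $\mu_n\to 0$.

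I expect the only genuinely non-routine ingredient to be this last necessary condition: detecting that $\mathscr{H}_\mu$ lands in the strictly smaller space $F^{2,q,\alpha}$ cannot be achieved with monomials and requires finer test functions — the reproducing kernels $K_\alpha(\cdot,a)$ used in the proof of Theorem~\ref{main2}(ii) — which is precisely where the extra decay of $(\mu_n)$ is extracted. Everything else amounts to assembling results already proved in the paper.
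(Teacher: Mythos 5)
Your proposal is correct and follows essentially the same route as the paper: both arguments extract the decay $\mu_n\lesssim (n+1)^{\frac{1}{2}(\frac1q-\frac1p)}$ in case (i) (the paper via Remark \ref{rmk:5.1}, you by testing on monomials directly) and the decay from Theorem \ref{main2}(ii) in case (ii), combine this with $\mu_n\ge\mu(\{1\})$ (you phrase it as $\mu_n\to\mu(\{1\})$ by dominated convergence, which is equivalent here) and with $\mu(0,1)=0$ from Theorem \ref{mainteo}, and then invoke Theorem \ref{th:compact}. The only cosmetic difference is that you make explicit the normalization $\mu_0=1$ and the boundedness of $\mathscr{H}_\mu$ on $F^q_\alpha$ via the relevant inclusions, steps the paper leaves implicit.
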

\begin{proof}
(i) Since $\mu_n \geq \mu(\{1\})$ then Remark \ref{rmk:5.1} gives $\mu_n\le C (n+1)^{(1/q-1/p)/2}$ and then $\mu\left(\{1\}\right) = 0.$ Hence $\mu\left((0,1]\right) = 0$ and we apply Theorem \ref{th:compact} to obtain the result.

(ii) It follows similarly because Theorem \ref{main2}  gives $\mu_n\le C (n+1)^{(q-2)4q}$. 
\end{proof}

\section{Compactness on weighted spaces}\label{sec:weighted}

We extend the characterization of compactness for Hausdorff operators acting on the Fock space $F^\infty_\alpha$ to more general Banach spaces of entire functions with weighted sup norms.
\par\medskip
Let $v$ be a continuous, positive and decreasing function on $[0, +\infty)$ such that
$$
\lim_{r\to +\infty}r^m v(r) = 0\ \ \forall m\in {\mathbb N}.$$ We extend $v$ to ${\mathbb C}$ by $v(z) = v(|z|)$ and consider the following weighted Banach spaces of entire functions
$$
H^\infty_v:= \left\{f\in H({\mathbb C}):\ \|f\|_v:=\sup_{z\in {\mathbb C}}|f(z)| v(z) < \infty\right\},$$
$$
H^0_v:= \left\{f\in H({\mathbb C}):\ |f| v \ \ \mbox{vanishes at infinity}\right\},$$ endowed with the norm $\|\cdot\|_v.$ For $v(r) = e^{-\frac{\alpha}{2}r^2}$ we obtain $H^\infty_v = F^\infty_\alpha.$
\par\medskip
Proceeding as in the proof of Theorem \ref{mainteo} one easily obtains the following result (see also \cite[Section 2]{Bonet}).
 \begin{proposition} \label{contHv} Let $\mu$ be a positive Borel measure on $(0,\infty)$. Then $\mathscr{H}_\mu: H^\infty_v\to H^\infty_v$ is a well defined and bounded operator if and only if $\mu(0,1) = 0$ and $\displaystyle\int_1^\infty\frac{d\mu(t)}{t} < \infty.$
 \end{proposition}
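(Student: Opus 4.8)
The plan is to mimic the structure of the proof of Theorem \ref{mainteo}, replacing the explicit Gaussian weight by the abstract weight $v$ and the monomials $u_n$ by the reproducing-kernel-type test functions adapted to $v$. The statement has two directions, and for each there is a necessity argument (using well-chosen test functions) and a sufficiency argument (a direct estimate reducing to the dilation operators $D_{1/t}$).

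\medskip
\noindent\textbf{Necessity.} First I would assume $\mathscr{H}_\mu:H^\infty_v\to H^\infty_v$ is well defined and bounded. Well-definedness on $u_n$ forces $\mu_n<\infty$ for all $n$, and boundedness gives $\|\mathscr{H}_\mu(u_n)\|_v=\mu_n\|u_n\|_v\le C\|u_n\|_v$, so $\mu_n\le C$ for all $n$; by Lemma \ref{prop:momentos} (together with the observation that $\sqrt[n]{\mu_n}$ is increasing with respect to the probability measure $d\mu(t)/t$, as in the proof of Theorem \ref{mainteo}, (v)$\Rightarrow$(iv)) this yields $\mu(0,1)=0$. Once $\mu(0,1)=0$ we have $\mathscr{H}_\mu(f)(z)=\int_1^\infty f(z/t)\,d\mu(t)/t$, and applying the operator to the constant function $f=u_0=1$ (which lies in $H^\infty_v$ since $v$ is bounded) gives $\mathscr{H}_\mu(1)(z)=\int_1^\infty d\mu(t)/t$, a finite constant precisely when $\int_1^\infty d\mu(t)/t<\infty$. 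So both conditions are necessary, and the argument is essentially immediate.

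\medskip
\noindent\textbf{Sufficiency.} Now assume $\mu(0,1)=0$ and $c:=\int_1^\infty d\mu(t)/t<\infty$. The key point is that the dilation operators are contractions on $H^\infty_v$: since $v$ is decreasing, for $t>1$ and $f\in H^\infty_v$,
$$|f(z/t)|v(|z|)\le |f(z/t)|v(|z|/t)\le \|f\|_v,$$
so $\|D_{1/t}f\|_v\le\|f\|_v$ and $\|D_{1/t}\|_{H^\infty_v\to H^\infty_v}\le 1$. Then for $f\in H^\infty_v$, since $\mathscr{H}_\mu(f)(z)=\int_1^\infty (D_{1/t}f)(z)\,d\mu(t)/t$, one estimates pointwise $|\mathscr{H}_\mu(f)(z)|v(z)\le \int_1^\infty |(D_{1/t}f)(z)|v(z)\,d\mu(t)/t\le \|f\|_v\int_1^\infty d\mu(t)/t=c\|f\|_v$, whence $\|\mathscr{H}_\mu(f)\|_v\le c\|f\|_v$. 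This also shows $\mathscr{H}_\mu(f)$ is entire (the integral converges locally uniformly, as in Proposition \ref{prop:continuity-entire}), so the operator is well defined and bounded. I would also remark that the same proof gives boundedness on $H^0_v$, since $D_{1/t}$ maps $H^0_v$ into itself and the integral converges in the norm.

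\medskip
\noindent\textbf{Main obstacle.} There is no serious obstacle here — this is the ``easy'' companion to the compactness result Theorem \ref{th:weighted}, and the proof is a straightforward adaptation of Theorem \ref{mainteo}. The only point requiring a little care is the necessity of $\mu(0,1)=0$: unlike the Fock case, the test functions $u_n$ with $\|u_n\|_v$ depending on the (unspecified) weight must be handled via $\sup_n\mu_n<\infty$ and Lemma \ref{prop:momentos}, rather than via the sharp asymptotics \eqref{normaun}; one must verify that $\sup_n\mu_n<\infty$ already suffices, which it does because $\sqrt[n]{\mu_n}\le\mu_n^{1/n}\le(\sup_k\mu_k)^{1/n}\to 1$ forces $\sup_n\sqrt[n]{\mu_n}\le 1$ by monotonicity, and Lemma \ref{prop:momentos} then delivers $\mu(0,1)=0$.
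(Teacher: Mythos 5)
Your proof is correct and follows exactly the route the paper intends: the paper gives no separate argument for Proposition \ref{contHv}, saying only that one proceeds as in Theorem \ref{mainteo}, which is precisely what you do (contractivity of $D_{1/t}$ on $H^\infty_v$ from the monotonicity of $v$ for sufficiency; monomials, $\sup_n\mu_n<\infty$ and Lemma \ref{prop:momentos} for necessity). The only point to tidy is that the monotonicity of $\sqrt[n]{\mu_n}$, which you use to pass from $\sup_n\mu_n<\infty$ to $\sup_n\sqrt[n]{\mu_n}\le 1$, requires $d\mu(t)/t$ to be a probability measure, so one should first normalize by $\mu_0$ (finite by well-definedness on constants) and apply the lemma to $\mu/\mu_0$; the conclusion $\mu(0,1)=0$ is unaffected.
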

\par\medskip
The discussion that follows will later allow compactness to be related to weak compactness. We denote
$$
X:=\left\{F:H^\infty_v\to {\mathbb C}\ \mbox{linear}:\ F_{|B} \ \tau\mbox{-continuous}\right\},$$ where $B$ is the closed unit ball in $H^\infty_v$ and $\tau$ denotes the compact open topology. $X$ is endowed with the norm $\norm{F} = \sup_{f\in B}|F(f)|.$ According to \cite{Bierstedt},
$$
\Phi:H^\infty_v\to X^\ast,\ \ \Phi(f)(F):= F(f)$$ and
$$
R:X\to \left(H^0_v\right)^\ast,\ \ R(F):=F_{|H^0_v}$$ are surjective isometries. In particular $\left(H^0_v\right)^{\ast\ast}$ is isometric to $H^\infty_v.$

\begin{lem}\label{lem:bitransposed} Let us assume that $\mathscr{H}_\mu: H^\infty_v\to H^\infty_v$ is a bounded operator. Then $\mathscr{H}_\mu$ is the bi-transposed map of $\mathscr{H}_\mu:H^0_v\to H^0_v.$
\end{lem}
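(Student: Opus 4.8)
The plan is to realise $\mathscr{H}_\mu$ on $H^\infty_v\cong(H^0_v)^{\ast\ast}$ as the second transpose of its restriction to $H^0_v$ by a duality diagram chase through the spaces $X$ and $(H^0_v)^{\ast}$ of \cite{Bierstedt}. First I would invoke Proposition \ref{contHv}: since $\mathscr{H}_\mu:H^\infty_v\to H^\infty_v$ is bounded, $\mu(0,1)=0$ and $\int_1^\infty\frac{d\mu(t)}{t}<\infty$, so $\mathscr{H}_\mu(f)(z)=\int_1^\infty f(z/t)\,\frac{d\mu(t)}{t}$. Then I would check that $H^0_v$ is $\mathscr{H}_\mu$-invariant: given $f\in H^0_v$ and $\varepsilon>0$, choose $M$ with $|f(w)|v(w)<\varepsilon$ for $|w|>M$; since $v$ is decreasing, $v(z)\le v(z/t)$ for $t\ge 1$, and splitting the integral at $t=|z|/M$ yields, for $|z|>M$,
$$
|\mathscr{H}_\mu(f)(z)|\,v(z)\le \varepsilon\int_1^\infty\frac{d\mu(t)}{t}+\|f\|_v\int_{|z|/M}^\infty\frac{d\mu(t)}{t},
$$
whose right-hand side has upper limit at most $\varepsilon\int_1^\infty\frac{d\mu(t)}{t}$ as $|z|\to\infty$; letting $\varepsilon\to 0$ shows $\mathscr{H}_\mu(f)\in H^0_v$. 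Hence $T:=\mathscr{H}_\mu|_{H^0_v}$ is a well-defined bounded operator on $H^0_v$.

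Next I would observe that $\mathscr{H}_\mu$ is continuous on norm-bounded sets of $H^\infty_v$ for the compact-open topology $\tau$: if $\sup_n\|f_n\|_v<\infty$ and $f_n\to f$ uniformly on $\{|w|\le\rho\}$, then $|\mathscr{H}_\mu f_n(z)-\mathscr{H}_\mu f(z)|\le \big(\sup_{|w|\le\rho}|f_n(w)-f(w)|\big)\int_1^\infty\frac{d\mu(t)}{t}$ for $|z|\le\rho$. It follows that $F\mapsto F\circ\mathscr{H}_\mu$ maps $X$ into $X$; call this operator $\mathscr{H}_\mu^{t}:X\to X$. By its very definition $(\mathscr{H}_\mu^{t}F)(f)=F(\mathscr{H}_\mu f)$, so under the isometry $\Phi:H^\infty_v\to X^\ast$ one has $\Phi\circ\mathscr{H}_\mu=(\mathscr{H}_\mu^{t})^{\ast}\circ\Phi$. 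On the other hand, using the invariance of $H^0_v$, for every $F\in X$ and $g\in H^0_v$ we get $(\mathscr{H}_\mu^{t}F)|_{H^0_v}(g)=F(\mathscr{H}_\mu g)=F(Tg)=\big(F|_{H^0_v}\circ T\big)(g)$, that is, $R\circ\mathscr{H}_\mu^{t}=T^{\ast}\circ R$ under the isometry $R:X\to(H^0_v)^\ast$.

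Finally I would take adjoints in $R\circ\mathscr{H}_\mu^{t}=T^{\ast}\circ R$ to obtain $(\mathscr{H}_\mu^{t})^{\ast}\circ R^{\ast}=R^{\ast}\circ T^{\ast\ast}$, and recall that the isometry $J:H^\infty_v\to(H^0_v)^{\ast\ast}$ realising $(H^0_v)^{\ast\ast}\cong H^\infty_v$ is $J=(R^{\ast})^{-1}\circ\Phi$. Composing the three relations,
$$
J\circ\mathscr{H}_\mu=(R^{\ast})^{-1}\circ\Phi\circ\mathscr{H}_\mu=(R^{\ast})^{-1}\circ(\mathscr{H}_\mu^{t})^{\ast}\circ\Phi=T^{\ast\ast}\circ(R^{\ast})^{-1}\circ\Phi=T^{\ast\ast}\circ J,
$$
which is precisely the assertion that $\mathscr{H}_\mu$ on $H^\infty_v\cong(H^0_v)^{\ast\ast}$ is the bitranspose of $T=\mathscr{H}_\mu|_{H^0_v}$. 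The two points requiring genuine (if short) work are the invariance $\mathscr{H}_\mu(H^0_v)\subseteq H^0_v$ and the fact that $F\circ\mathscr{H}_\mu\in X$ whenever $F\in X$ (equivalently, that $\mathscr{H}_\mu$ is $\tau$-$\tau$ continuous on bounded sets); everything after that is a formal consequence of the isometries $\Phi$ and $R$ of \cite{Bierstedt}.
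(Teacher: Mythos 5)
Your proof is correct and follows essentially the same route as the paper: the key point in both is that $\mathscr{H}_\mu$ is $\tau$-continuous on bounded sets (via $M_\infty(\mathscr{H}_\mu f,r)\le M_\infty(f,r)\int_1^\infty \frac{d\mu(t)}{t}$), so that $F\circ\mathscr{H}_\mu\in X$ for every $F\in X$, after which the identification through the Bierstedt--Summers isometries $\Phi$ and $R$ is formal. The only differences are that you prove the invariance $\mathscr{H}_\mu(H^0_v)\subseteq H^0_v$ directly (the paper cites \cite[Prop.\ 2.1]{Bonet} for this) and that you write out the diagram chase the paper leaves implicit; both additions are sound.
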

\begin{proof}
	From \cite[Prop. 2.1]{Bonet} we have that $\mathscr{H}_\mu:H^0_v\to H^0_v$ is also continuous. It suffices to check that $\mathscr{H}_\mu:H^\infty_v\to H^\infty_v$ is $\sigma(H^\infty_v, X)$-continuous, that is, $F\circ \mathscr{H}_\mu\in X$ for every $F\in X.$ This is a consequence of the fact that $\mathscr{H}_\mu:H^\infty_v\to H^\infty_v$ is continuous when $H^\infty_v$ is endowed with the $\tau$-topology. In fact, due to Proposition \ref{contHv}
	$$
	\left(\mathscr{H}_\mu f\right)(z) = \int_1^\infty f\left(\frac{z}{t}\right)\frac{d\mu(t)}{t}$$ and then
	$$M_\infty(\mathscr{H}_\mu(f),r)\leq  \int_1^\infty M_\infty(f,r/t)\frac{d\mu(t)}{t}\le M_\infty(f,r)\int_1^\infty\frac{d\mu(t)}{t}$$ for any $r>0$ and the proof is complete.
\end{proof}

\par\medskip
For the sake of completeness we recall a construction from \cite{Lusky}.
\par\medskip
For every $x > 0$  take $r_x$ a global maximum of $r\mapsto r^x v(r).$ Then $r_m\uparrow +\infty$ and $\norm{z^m}_v = r_m^m v(r_m)$ for every $m\in {\mathbb N}.$ For $0 < x < y$ and $f(z) = \sum_{k=0}^\infty b_k z^k,$ we put
$$
\left(V_{x,y}f\right)(z) = \sum_{k=0}^{[x]} b_k z^k + \sum_{[x] < k < [y]} \frac{[y]-k}{[y]-[x]} b_k z^k.$$ According to \cite[Prop. 5.2]{Lusky}, there are numbers $1 < x_1 < x_2 < \ldots$ diverging to infinity such that, for every $f\in H^\infty_v$ we have
$$
\norm{f}_v \asymp \sup_n \sup_{r_{x_{n-1}} \leq r \leq r_{x_{n+1}}}M_\infty(f_n,r)v(r),$$ where
$$
f_n = \big(V_{x_n,x_{n+1}} - V_{x_{n-1},x_n}\big) f.$$
\par\medskip
Let us now assume that $b_k = 0$ unless $k = [x_\ell]$ for some $\ell\in {\mathbb N}.$ Then
$$\big(V_{x_n,x_{n+1}} f\big)(z) = \sum_{k=0}^{[x_n]} b_k z^k.$$ Consequently
$$
f_n(z) = \sum_{[x_{n-1}] < k \leq [x_n]} b_k z^k.$$ That is, $f_n = 0$ in the case $[x_{n-1}] = [x_n]$ and $f_n = b_p z^p$ when $[x_{n-1}] < [x_n] = p.$ We now denote
$$
I:=\left\{[x_n]:\ n\in {\mathbb N}\right\},\ \ J:=\{n\in {\mathbb N}:\ [x_{n-1}] < [x_n]\}.$$ We have
$$
\norm{f}_v \asymp \sup_{n\in J}\sup_{r_{x_{n-1}} \leq r \leq r_{x_{n+1}}}M_\infty(f_n,r)v(r).$$ We observe that $[x_{n-1}] < [x_n] = p$ implies $x_{n-1}\leq [x_n]\leq x_{n+1},$ hence $r_{x_{n-1}}\leq r_p \leq r_{x_{n+1}}$ and
$$
\sup_{r_{x_{n-1}} \leq r \leq r_{x_{n+1}}}M_\infty(f_n,r)v(r) = |b_p|\cdot \norm{u_p}_v.$$ That is, if $f\in H^\infty_v,$ $f(z) = \sum_{k=0}^\infty b_k z^k,$ and $b_k = 0$ unless $k = [x_\ell]$ for some $\ell\in {\mathbb N}$ then
$$
\norm{f}_v \asymp \sup_{p\in I}|b_p|\cdot \norm{u_p}_v.$$

\begin{lem}\label{lem:iso} Let us assume that $\displaystyle\lim_{n\to \infty}\norm{u_n}_v^{\frac{1}{n}} = \infty$ and denote $w_n = \frac{u_n}{\norm{u_n}_v}.$ Then
	\begin{itemize}
		\item[(i)] $\Phi:\ell^\infty(I)\to H^\infty_v,\ \alpha = (\alpha_p)_{p\in I}\mapsto \sum_{p\in I} \alpha_p w_p(z)$ is an isomorphism into.
		\item[(ii)] $\alpha\in c_0(I)\Longleftrightarrow \Phi(\alpha)\in H^0_v.$
	\end{itemize}
\end{lem}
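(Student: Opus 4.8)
The plan is to combine the weight-decomposition machinery recalled just before the statement with the elementary fact that $\lim_n \|u_n\|_v^{1/n}=\infty$ forces the monomials to form a very "spread out" system, so that evaluating a series supported on $I=\{[x_n]:n\in\mathbb N\}$ on a disc of radius $r$ only one term matters. First I would prove that $\Phi$ is well defined. Given $\alpha\in\ell^\infty(I)$ set $f(z)=\sum_{p\in I}\alpha_p w_p(z)$. For a fixed $r>0$ we have $M_\infty(w_p,r)=r^p/\|u_p\|_v$, and since $\|u_p\|_v^{1/p}\to\infty$ the ratio $r^p/\|u_p\|_v$ is eventually $\le 2^{-p}$; hence $\sum_{p\in I}|\alpha_p| M_\infty(w_p,r)<\infty$ for every $r$, which shows $f\in H(\mathbb C)$ and also gives $\|f\|_v\le C\|\alpha\|_\infty$ after multiplying by $v(r)$ and taking the supremum over $r$ (using that $\|w_p\|_v=1$). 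So $\Phi:\ell^\infty(I)\to H^\infty_v$ is a bounded linear map.

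Next I would prove the lower estimate $\|\Phi(\alpha)\|_v\gtrsim \|\alpha\|_{\infty}$, which is exactly the content of the isomorphism-into claim. Here is the point at which I invoke the construction recalled above: the function $f=\Phi(\alpha)$ has Taylor coefficients $b_k$ which vanish unless $k\in I$, i.e. unless $k=[x_\ell]$ for some $\ell$, and $b_{[x_\ell]}=\alpha_{[x_\ell]}/\|u_{[x_\ell]}\|_v$. The displayed equivalence $\norm{f}_v \asymp \sup_{p\in I}|b_p|\cdot \norm{u_p}_v$ from the paragraph immediately preceding the lemma then yields
$$\|f\|_v \asymp \sup_{p\in I}|b_p|\,\|u_p\|_v=\sup_{p\in I}|\alpha_p|=\|\alpha\|_\infty,$$
which gives both inequalities at once and proves (i). (One has to check the hypotheses for that equivalence to apply — namely $\lim_n\|u_n\|_v^{1/n}=\infty$, which holds by assumption and guarantees $r_m\uparrow\infty$ as well as the validity of Lusky's block decomposition — but the paper already recorded these consequences.)

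For (ii), the implication $\alpha\in c_0(I)\Rightarrow\Phi(\alpha)\in H^0_v$ follows by approximating $\alpha$ by finitely supported sequences: finite linear combinations of the $w_p$ are polynomials, hence trivially lie in $H^0_v$, and $\Phi$ is bounded into $H^\infty_v$; since $H^0_v$ is a closed subspace of $H^\infty_v$ and finitely supported sequences are dense in $c_0(I)$, we get $\Phi(\alpha)\in H^0_v$. Conversely, if $\Phi(\alpha)\in H^0_v$ then $|\alpha_p|=|b_p|\,\|u_p\|_v=M_\infty(f_p,r_p)v(r_p)$ where $f_p$ is the block of $f$ containing $z^p$; since $r_p\to\infty$ as $p\to\infty$ through $I$ and $|f|v$ vanishes at infinity, and each block $f_p$ is a single monomial whose sup over the relevant annulus is attained at $r_p$, one extracts $|\alpha_p|\to 0$. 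The main obstacle — and the step I would be most careful about — is this last extraction: one must relate the "global" decay of $|f(z)|v(z)$ to the "coefficient-wise" decay of the blocks, which is precisely why the block decomposition $\norm{f}_v\asymp\sup_{n\in J}\sup_{r_{x_{n-1}}\le r\le r_{x_{n+1}}}M_\infty(f_n,r)v(r)$ with $f_n$ a single monomial is invoked rather than a naive Cauchy-estimate argument; the block form localizes each coefficient to a concrete annulus on which $|f|v$ is small once $p$ is large.
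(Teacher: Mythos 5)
Your part (i) is essentially the paper's proof: both reduce to the displayed equivalence $\norm{f}_v \asymp \sup_{p\in I}|b_p|\,\norm{u_p}_v$ coming from Lusky's block decomposition, which yields the two-sided estimate $\norm{\Phi(\alpha)}_v\asymp\norm{\alpha}_\infty$ in one stroke. One caveat: your preliminary claim that $\norm{f}_v\le C\norm{\alpha}_\infty$ follows ``after multiplying by $v(r)$ and taking the supremum over $r$, using that $\norm{w_p}_v=1$'' does not work as stated — each term $r^p v(r)/\norm{u_p}_v$ is at most $1$, but there are infinitely many of them, and for non-sparse index sets the sum $\sum_p r^pv(r)/\norm{u_p}_v$ is genuinely unbounded in $r$ (e.g.\ for the Gaussian weight it grows like $r$). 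The uniform bound is exactly what the sparseness of $I$ and the block decomposition buy you, so you should drop that sentence and rely only on the equivalence, as you in fact do in the next step. For (ii) you diverge from the paper, which disposes of both implications at once by the abstract chain $\Phi(c_0)=\overline{\Phi(\varphi)}^{\,\mathrm{Range}(\Phi)}=\overline{\langle u_p: p\in I\rangle}^{\,\mathrm{Range}(\Phi)}=\mathrm{Range}(\Phi)\cap H^0_v$, using that $\Phi$ is an isomorphism onto its range and that $H^0_v$ is the closure of the polynomials. Your forward direction is the same density argument. For the converse you give a concrete coefficient-extraction argument, and the ``main obstacle'' you flag is in fact a non-issue: since $\norm{u_p}_v=r_p^p v(r_p)$ with $r_p\uparrow\infty$, the plain Cauchy estimate gives $|\alpha_p|=|b_p|\,r_p^p v(r_p)\le M_\infty(f,r_p)\,v(r_p)=\sup_{|z|=r_p}|f(z)|v(z)\to 0$ whenever $|f|v$ vanishes at infinity — no localization of the blocks is needed, so your converse is correct and arguably more elementary than the paper's.
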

\begin{proof}
	(i) For any $\alpha = (\alpha_p)_{p\in I}\in \ell^\infty(I)$ we have that $f(z):=\displaystyle\sum_{p\in I} \alpha_p w_p(z)$ is an entire function. Moreover, from  \cite[Prop. 5.2]{Lusky} and the previous discussion $f\in H^\infty_v$ and $\norm{f}_v \asymp \sup_{p\in I}|\alpha_p|.$
	\par\medskip
	(ii) Let $\varphi$ denote the subspace of $\ell^\infty(I)$ consisting on those sequences with finitely many non-null coordinates. Then
	$$
	\Phi(c_0) = \Phi\big(\overline{\varphi}\big) = \overline{\Phi(\varphi)}^{\mbox{Range}(\Phi)} = \overline{\langle u_p:\ p\in I\rangle}^{\mbox{Range}(\Phi)} = \mbox{Range}(\Phi) \cap H^0_v.$$
\end{proof}

\begin{rmk}{\rm  For the weights $v(z) = \exp\left(-|z|^p\right),\ p\in {\mathbb N},$ Lemma \ref{lem:iso} should be compared with \cite[Theorem 3.2]{BG}.}
\end{rmk}

\begin{theorem}\label{th:weighted} Let us assume $\lim_{n\to \infty}\norm{u_n}_v^{\frac{1}{n}} = \infty$ \ and
	$\lim_{r\to \infty}\frac{v(tr)}{v(r)} = 0$ for every $t > 1$ and let $\mathscr{H}_\mu:H^\infty_v\to H^\infty_v$ be a bounded operator. The following conditions are equivalent:
	\begin{itemize}
		\item[(i)] $\mathscr{H}_\mu:H^\infty_v\to H^\infty_v$ is compact.
		\item[(ii)] $\mathscr{H}_\mu:H^0_v\to H^0_v$ is weakly compact.
		\item[(iii)] $\mu(\{1\}) = 0.$
	\end{itemize}	
\end{theorem}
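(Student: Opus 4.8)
The plan is to run the cycle (iii) $\Rightarrow$ (i) $\Rightarrow$ (ii) $\Rightarrow$ (iii). Since $\mathscr{H}_\mu$ is assumed bounded on $H^\infty_v$, Proposition \ref{contHv} gives $\mu(0,1)=0$ and $\int_1^\infty \frac{d\mu(t)}{t}<\infty$, so $\mathscr{H}_\mu f(z)=\int_1^\infty f(z/t)\,\frac{d\mu(t)}{t}$; and because $v$ is decreasing, $v(z)\le v(z/t)$ for $t\ge 1$, which yields $\|D_{1/t}\|_{H^\infty_v\to H^\infty_v}=1$ for all $t\ge 1$ (the value $1$ is attained on $u_0$). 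The implication (i) $\Rightarrow$ (ii) is immediate: by \cite[Prop. 2.1]{Bonet} $\mathscr{H}_\mu$ maps $H^0_v$ into itself, and the image of the unit ball of $H^0_v$ is relatively compact in $H^\infty_v$ and contained in the closed subspace $H^0_v$, so $\mathscr{H}_\mu|_{H^0_v}$ is compact, hence weakly compact.

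For (iii) $\Rightarrow$ (i) I would imitate the proof of Theorem \ref{compact4}. For $\delta>1$ put $T_\delta f(z)=\int_\delta^\infty f(z/t)\,\frac{d\mu(t)}{t}$. Since $\mu(0,1)=0$ and $t\ge 1$ on the relevant range, $\|\mathscr{H}_\mu-T_\delta\|_{H^\infty_v\to H^\infty_v}\le \int_{[1,\delta)}\|D_{1/t}\|\,\frac{d\mu(t)}{t}\le \mu([1,\delta))$, which decreases to $\mu(\{1\})=0$ as $\delta\downarrow 1$; so it suffices to prove each $T_\delta$ compact. If $\|f\|_v\le 1$ and $t\ge\delta$, then $|f(z/t)|\le 1/v(|z|/t)\le 1/v(|z|/\delta)$, so $|T_\delta f(z)|\,v(z)\le\big(\int_\delta^\infty\tfrac{d\mu(t)}{t}\big)\,\tfrac{v(|z|)}{v(|z|/\delta)}$. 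By hypothesis $\tfrac{v(\delta r)}{v(r)}\to 0$ as $r\to\infty$, hence $|T_\delta f(z)|\,v(z)\to 0$ uniformly over the unit ball $B$; since $\{T_\delta f:f\in B\}$ is also bounded in $H(\mathbb{C})$ and therefore normal, the finite-net covering argument used in Lemma \ref{compactinclusion2} shows $T_\delta(B)$ is relatively compact in $H^\infty_v$.

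The main point, and the step I expect to be the real obstacle, is (ii) $\Rightarrow$ (iii), where both standing hypotheses on $v$ are essential and the predual machinery must be used correctly. Using the isometries $(H^0_v)^{\ast}\cong X$ and $(H^0_v)^{\ast\ast}\cong H^\infty_v$ from \cite{Bierstedt} together with Lemma \ref{lem:bitransposed} (which identifies $\mathscr{H}_\mu$ on $H^\infty_v$ with the bitranspose of $\mathscr{H}_\mu$ on $H^0_v$), Gantmacher's theorem gives: $\mathscr{H}_\mu:H^0_v\to H^0_v$ is weakly compact if and only if $\mathscr{H}_\mu(H^\infty_v)\subseteq H^0_v$. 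It then remains to show that $\mathscr{H}_\mu(H^\infty_v)\subseteq H^0_v$ forces $\mu(\{1\})=0$. Assume $c:=\mu(\{1\})>0$; then $\mu_n=\int_{[1,\infty)}\frac{d\mu(t)}{t^{n+1}}\ge c$ for every $n$. With $I=\{[x_n]:n\in\mathbb{N}\}$, $w_p=u_p/\|u_p\|_v$, and $\Phi$ as in the construction preceding Lemma \ref{lem:iso}, the assumption $\lim_n\|u_n\|_v^{1/n}=\infty$ allows that lemma to be applied: the constant sequence $(1)_{p\in I}\in\ell^\infty(I)$ gives $f:=\sum_{p\in I}w_p\in H^\infty_v$, while (by the termwise action of $\mathscr{H}_\mu$, valid since $(\mu_n)$ is bounded) $\mathscr{H}_\mu f=\sum_{p\in I}\mu_p w_p=\Phi\big((\mu_p)_{p\in I}\big)$. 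Since $\mu_p\ge c>0$, the sequence $(\mu_p)_{p\in I}$ is not in $c_0(I)$, so $\mathscr{H}_\mu f\notin H^0_v$ by Lemma \ref{lem:iso}(ii), contradicting $\mathscr{H}_\mu(H^\infty_v)\subseteq H^0_v$. This closes the cycle and finishes the proof.
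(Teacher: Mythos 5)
Your proposal is correct and follows essentially the same route as the paper: the same approximation $T_\delta$ with $\norm{\mathscr{H}_\mu-T_\delta}\le\mu([1,\delta])$ for (iii)$\Rightarrow$(i), and the same test function $\sum_{p\in I}u_p/\norm{u_p}_v$ via Lemma \ref{lem:iso} and the bitranspose identification for (ii)$\Rightarrow$(iii). The only cosmetic differences are that you unpack the compactness of $T_\delta$ directly (uniform vanishing at infinity plus Montel) where the paper factors it through $H^\infty_w$ with $w(z)=v(z/\delta)$, and that you make the appeal to Gantmacher's theorem explicit where the paper leaves it implicit in Lemma \ref{lem:bitransposed}.
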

\begin{proof} (i) $\Rightarrow $ (ii) follows trivially since $\mathscr{H}_\mu(H^0_v)\subset H^0_v$ and then $\mathscr{H}_\mu:H^0_v\to H^0_v$ is compact.
	\par\medskip
	(ii) $\Rightarrow $ (iii). From Lemma \ref{lem:bitransposed} we get $\mathscr{H}_\mu\big(H^\infty_v\big)\subset H^0_v.$ Take $f = \displaystyle\sum_{p\in I}\frac{u_p}{\norm{u_p}_v}.$
	According to Lemma \ref{lem:iso}, $f\in H^\infty_v.$ Since $\mathscr{H}_\mu(f)\in H^0_v$ and
	$$
	\left(\mathscr{H}_\mu f\right)(z) = \sum_{p\in I} \left(\int_1^\infty \frac{d\mu(t)}{t^{p+1}}\right)\cdot\frac{z^p}{\norm{u_p}_v},$$ we conclude from Lemma \ref{lem:iso} that $$\displaystyle\lim_{p\to \infty} \left(\int_1^\infty \frac{d\mu(t)}{t^{p+1}}\right) = 0.$$ That is, $\mu(\{1\}) = 0.$
	\par\medskip
	(iii) $\Rightarrow $(i). For any $\delta > 1$ we define
	$$
	T_\delta(f)(z) = \int_\delta^\infty f\left(\frac{z}{t}\right)\ \frac{d\mu(t)}{t}.$$ We put $w(z):= v\left(\delta^{-1}z\right).$ Then $\displaystyle\lim_{r\to \infty}\frac{v(r)}{w(r)} = 0,$ which implies that the inclusion map $H^\infty_w \to H^\infty_v$ is compact. As in the proof of Theorem \ref{compact4} we check that $T_\delta:H^\infty_v\to H^\infty_w$ is bounded and $T_\delta:H^\infty_v\to H^\infty_v$ is compact. As
	$$
	\norm{\mathscr{H}_\mu - T_\delta}_{H^\infty_v\to H^\infty_v}\leq \mu\left([1,\delta]\right)$$ goes to $0$ as $\delta\to 1,$ we are done.

\end{proof}

\section{Some examples}

Special examples can be provided using $d\mu(t)=\phi(t)dt$ for some positive measurable function $\phi$ with $\int_0^\infty\frac{\phi(t)}{t}dt<\infty$. Whenever
$d\mu(t)=\phi(t)dt$ and $d\eta(t)=\psi(t)dt$  we can consider $d\nu(t)=\phi*\psi(t)dt$ where $$\phi*\psi(t)=\int_0^\infty \phi(\frac{t}{s})\psi(s) \frac{ds}{s}.$$
It is easy to check that if $\mu$ and $\eta$ are positive Borel measures on $(0, \infty)$ of the form $d\mu = \phi(t)\chi_{(1,\infty)}(t)dt$ and $d\eta = \psi(t)\chi_{(1,\infty)}(t)dt$ then, formally, $\mathscr{H}_{\mu}\circ \mathscr{H}_{\eta}= \mathscr{H}_{\nu}$ that  is the Hausdorff operator with respect to $\nu$ on $(1,\infty).$ Moreover, since the monomials are eigenfunctions of any Hausdorff operator, it follows that
$\nu_n = \mu_n \eta_n$.

\begin{example} {\rm For every $a>0$ we consider the measure $d\mu(t) = \chi_{(1,\infty)}(t)\frac{dt}{t^a}.$ Then
$$
\mu_n = \int_1^\infty\frac{dt}{t^{n+a+1}} =\frac{1}{n+a}$$ and using Corollary \ref{th:from-inf-to-1} we obtain
$$
\mathscr{H}_\mu(F^\infty_\alpha) \subset \bigcap_{p>1}F^p_\alpha.$$ From Theorem \ref{case2} we get $\left(\mathscr{H}_\mu \circ\mathscr{H}_\mu \right)(F^\infty_\alpha) \subset F^1_\alpha.$
\par\medskip
In the case $a=1$ we recover the Hardy operator
$$
\left(\mathscr{H}_\mu f\right)(z) = \frac{1}{z}\int_0^z f(\xi)\ d\xi.$$ It is shown in \cite[Theorem 3.12]{bbf} that $\mathscr{H}_\mu \circ\mathscr{H}_\mu$ is compact when acting on weighted spaces of type $H^\infty_v.$ Theorems \ref{th:compact} and \ref{th:weighted} give the compactness of $\mathscr{H}_\mu$ on spaces $H^\infty_v$ and also when acting on $F^p_\alpha$ for every $\alpha >0$ and $0 < p\leq \infty.$
\par\medskip For $a = n\in {\mathbb N}$ we take $g(z) = z^n.$ Then $\mathscr{H}_\mu$ is related to Volterra operator
$$
\left(V_g f\right)(z) = \int_0^z f(\xi)g'(\xi)\ d\xi,$$ by $\left(V_g f\right)(z) = nz^n \left(\mathscr{H}_\mu f\right)(z).$
}
\end{example}

\begin{example} {\rm For every $a+1>b>0$ we consider the measure $$d\mu_{a,b}(t) = (t-1)^{b-1}t^{-a}\chi_{(1,\infty)}(t) dt.$$ Then
$$
\mu_n = \int_1^\infty(t-1)^{b-1}\frac{dt}{t^{n+a+1}} =\int_0^1 (1-s)^{b-1} s^{n+a-b}ds= B(b,n+a-b+1)$$
Using now that $B(p, n+\gamma)\asymp (n+1)^{-p}$  one can use Theorem \ref{case2} to obtain that $\mathscr{H}_{\mu_{a,b}}(F^\infty_\alpha) \subset F^1_\alpha$ for $b>1$ and Corollary \ref{th:from-inf-to-1} in the case $1/2 <b\le 1$ to obtain
$$
\mathscr{H}_{\mu_{a,b}}(F^\infty_\alpha) \subset \bigcap_{p>\frac{1}{b}}F^p_\alpha.$$ From Theorem \ref{case2} we get $\left(\mathscr{H}_{\mu_{a_1,b_1}} \circ\mathscr{H}_{\mu_{a_2,b_2}} \right)(F^\infty_\alpha) \subset F^1_\alpha$ whenever $b_1+b_2>1$.
\par\medskip
}
\end{example}

\begin{example}{\rm Let $\left(\lambda_k\right)_{k\in {\mathbb N}}$ and $\left(t_k\right)_{k\in {\mathbb N}}$ two sequences of positive numbers. We consider
		$$
		\mu = \sum_{k=1}^\infty \lambda_k \delta_{t_k}.$$ Then, formally,
$$
\left(\mathscr{H}_\mu f\right)(z) = \sum_{n=1}^\infty \frac{\lambda_k}{t_k}f\left(\frac{z}{t_k}\right).$$ From Theorem \ref{mainteo}, $\mathscr{H}_\mu$ is a bounded operator on $F^p_\alpha$ if and only if $\mu(0,1) = 0$ and $\int_1^\infty\frac{d\mu(t)}{t} < \infty,$ which means precisely $t_k\geq 1\ \forall k\in {\mathbb N}$ and $\displaystyle\sum_{k=1}^\infty\frac{\lambda_k}{t_k} < \infty.$

Also Theorem \ref{th:compact} gives that $\mathscr{H}_\mu$ is compact on $F^p_\alpha$ if and only if $t_k>1$ for all $k\in {\mathbb N}$ and $\displaystyle\sum_{k=1}^\infty\frac{\lambda_k}{t_k} < \infty.$

We have that $$
\mu_n = \int_1^\infty\frac{d\mu(t)}{t^{n+1}} = \sum_{k=1}^\infty\frac{\lambda_k}{t_k^{n+1}} $$

Let $A_0=\{k\in \mathbb N: t_k=1\}$. If $A_0\ne \emptyset$ then there exists $k_0\in A_0$ and  $\mu_n\ge \lambda_{k_0}$ for all $n$. Therefore  $\mathscr{H}_\mu$ is not compact and there is no $q<p$ such that $\mathscr{H}_\mu(F^p_\alpha)\subset F^q_\alpha$.

Assume now that $A_0=\emptyset$ and set $t_0=\inf_{k\in\mathbb N} t_k$.

If  $t_0>1$ then
$$\mu_n \le (\frac{1}{t_0})^n\sum_{k=1}^\infty\frac{\lambda_k}{t_k}$$ and
 hence Corollary \ref{th:from-inf-to-1} permits to conclude that $\mathscr{H}_\mu\left(F^\infty_\alpha\right)\subset F^1_ \alpha.$

In the case that $t_0=1$ and $\sum_{k=1}^\infty \frac{\lambda_k }{t_k-1} <\infty$ also we obtain $\mathscr{H}_\mu\left(F^\infty_\alpha\right)\subset F^1_ \alpha$
since $$\sum_{n=0}^\infty \mu_n= \sum_{k=1}^\infty\lambda_k(\sum_{n=0}^\infty \frac{1}{t_k^{n+1}})=\sum_{k=1}^\infty \frac{\lambda_k }{t_k-1} .$$
}
\end{example}

\section{Summing operators}
	
In this section we shall try to extend the results mentioned in the introduction about Hausdorff operators belonging to the  Schatten classes to the setting of summing operators to cover not only the case of Hilbert spaces but also to consider the action between different Fock spaces.

 Recall that for a Banach space $X$ we denote by $\ell^{weak}_p(X)$ and $\ell_p(X)$ the spaces of sequences $(x_n)\subset X$ such that $(\langle x^*,x_n\rangle)\in \ell_p$ for every $x^*\in X^*$ and $(\|x_n\|)\in \ell_p$ respectively. Also recall  that given two Banach spaces $X$ and $Y$ and  $1\le p\le q <\infty$, an operator  $T:X\to Y$ is said to be $(q,p)$-summing \cite{Diestel}, to be written $T\in \Pi_{(q,p)}(X,Y)$,  if $(T(x_n))\in \ell_q(Y)$ whenever $(x_n)\in \ell^{weak}_p(X)$. In the case $p=q$ they are called $p$-summing operators, denoted $T\in\Pi_{p}(X,Y)$ and $1$-summing operators are usually called absolutely summing.

 A special  class of $p$-summing operators are the so-called $p$-nuclear operators, to be denoted $N_p(X,Y),$ which  due to \cite[Proposition 5.23]{Diestel} can be described as those that can be written $T=\sum_{n=0}^\infty x_n^*\otimes y_n$ where $(x_n^*)\in \ell_p(X^*)$ and $(y_n)\in \ell^{weak}_{p'}(Y)$ where $1\le p <\infty$ and $1/p+1/p'=1$ and we use the notation  $x^*\otimes y$ for the rank one operator from $X$ to $Y$ given by $$x^*\otimes y(x)=\langle x^*,x\rangle y$$ for $x\in X,\ x^*\in X^*$ and $y\in Y$. The  $1$-nuclear operators are simply called nuclear and corresponds to $T=\sum_{n=0}^\infty x_n^*\otimes y_n$ for a bounded sequence $(y_n)\subset Y$ and $(x_n^*)\in \ell_1(X^*)$.

 In this section we include some results about $(q,p)$-summing  and $p$-nuclear operators on Fock spaces that are not Hilbert spaces. Several properties on such spaces will be useful in the sequel, for instance that for $1\le p<\infty$ the space $F^p_\alpha$, as subspace of $L^p_\alpha$, has  cotype $\tilde p=\max\{p,2\}$ (see for instance \cite{Diestel}). This implies that $\ell^{weak}_1(F^p_\alpha)\subset \ell_{\tilde p}(F^p_\alpha).$ This guarantees that if $\mathcal H_\mu(F^p_\alpha)\subset F^q_\alpha$ for $1\le p,q<\infty$ then $\mathcal H_\mu \in \Pi_{(\tilde p,1)}(F^p_\alpha,F^q_\alpha)$. In particular for $1\le p\le 2$,  if $\sup_{n\ge 0} \mu_n<\infty$ then we have that $\mathcal H_\mu \in \Pi_{(2,1)}(F^p_\alpha,F^p_\alpha)$.

 Moreover it is known that $F^p_\alpha$ is isomorphic to $\ell^p$ for $1\le p\le \infty$ (see \cite[Proposition 10]{GW} for $1<p<\infty$ and use the fact that $f^\infty_\alpha$ is isomorphic to $c_0$ \cite{galbis} for $p=1$). This allows to get a number of simple consequences.

Using Grothendieck's Theorem (see \cite[p.11]{Diestel}), since $F^2_\alpha\subset F^p_\alpha$ for $p\ge 2$, we obtain that if $\sup_{n\ge 0} \mu_n<\infty$ then   \begin{equation}\label{inclu0}\mathcal H_\mu \in \Pi_{1}(F^1_\alpha,F^p_\alpha), \quad p\ge 2. \end{equation}

Also using  general theory of $(q,p)$-summing operators (see \cite[Theorem 11.14]{Diestel}) acting on $L^\infty$-spaces we obtain that if $\mathcal H_\mu(F^\infty_\alpha)\subset F^p_\alpha$ for $1\le p<\infty$ then
    \begin{equation}\label{inclu1}
    \mathcal H_\mu \in \Pi_2(F^\infty_\alpha,F^p_\alpha), \quad 1\le p\le 2
    \end{equation}
    and
    \begin{equation}\label{inclu2}
    \mathcal H_\mu \in \Pi_{(q,p)}(F^\infty_\alpha,F^q_\alpha)=\Pi_{r}(F^\infty_\alpha,F^q_\alpha), \quad 1\le p<q<r<\infty, \quad q>2 .
    \end{equation}

    Note that in Corollary \ref{th:from-inf-to-1} we obtain that the inclusion $\mathcal H_\mu(F^\infty_\alpha)\subset F^p_\alpha$ for $1\le p\le 2$ gives $\sum_{n=0}^\infty \mu_n^p (n+1)^{-1/2}<\infty$.  We can improve this result  using (\ref{inclu1}).
    \begin{proposition} Let $\mu$ be a positive Borel measure defined on $(0,\infty)$.

    (i) If  $\mathcal H_\mu(F^\infty_\alpha)\subset F^p_\alpha$ for some $1\le p\le 2$ then $\sum_{n=0}^\infty \mu_n^2 (n+1)^{1/p-1/2}<\infty.$

    (ii) If  $\mathcal H_\mu(F^\infty_\alpha)\subset F^q_\alpha$ for some $q>2$ then $\sum_{n=0}^\infty \mu_n^q (n+1)^{1/2(1-q/2)}<\infty$.
    \end{proposition}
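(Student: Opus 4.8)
The plan is to use that, under the hypothesis $\mathcal H_\mu(F^\infty_\alpha)\subset F^s_\alpha$ (with $s=p$ or $s=q$), the operator $\mathcal H_\mu$ is a summing operator from $f^\infty_\alpha$ into $F^s_\alpha$, and then to test this summing property against one explicit sequence of normalized monomials lying in $\ell^{weak}_2(f^\infty_\alpha)$. Note first that $f^\infty_\alpha\subset F^\infty_\alpha$ isometrically, so the hypothesis gives (by the closed graph theorem) a bounded operator $\mathcal H_\mu:f^\infty_\alpha\to F^s_\alpha$; moreover $f^\infty_\alpha$ is isomorphic to $c_0$. For $s=p\le 2$ the space $F^p_\alpha$ has cotype $2$, so $\mathcal H_\mu:f^\infty_\alpha\to F^p_\alpha$ is $2$-summing (this is (\ref{inclu1}), restricted to the subspace $f^\infty_\alpha$); for $s=q>2$ the space $F^q_\alpha$ has cotype $q$, so by \cite[Theorem 11.14]{Diestel} the operator $\mathcal H_\mu:f^\infty_\alpha\to F^q_\alpha$ is $(q,1)$-summing, hence $q$-summing because $\Pi_{(q,1)}\subset\Pi_{(q,q)}=\Pi_q$.

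Next I would fix $x_n:=u_n/\|u_n\|_{2,\alpha}\in f^\infty_\alpha$ and check that $(x_n)\in\ell^{weak}_2(f^\infty_\alpha)$. Using $(f^\infty_\alpha)^\ast=F^1_\alpha$ together with the pairing $\langle f,g\rangle_\alpha=\sum_k a_k\overline{b_k}\,k!/\alpha^k$ (for $f=\sum a_ku_k$, $g=\sum b_ku_k$), one computes $\langle x_n,g\rangle_\alpha=\overline{b_n}\,\|u_n\|_{2,\alpha}$, so $\sum_n|\langle x_n,g\rangle_\alpha|^2=\sum_n|b_n|^2\|u_n\|_{2,\alpha}^2=\|g\|_{2,\alpha}^2$ for every $g\in F^1_\alpha$; since $F^1_\alpha\hookrightarrow F^2_\alpha$ continuously, this is dominated by a constant multiple of $\|g\|_{1,\alpha}^2$, which shows $(x_n)\in\ell^{weak}_2(f^\infty_\alpha)$ and a fortiori $(x_n)\in\ell^{weak}_q(f^\infty_\alpha)$ for every $q\ge 2$. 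On the other hand $\mathcal H_\mu x_n=\mu_n u_n/\|u_n\|_{2,\alpha}$, so by (\ref{normaun}) one has $\|\mathcal H_\mu x_n\|_{s,\alpha}=\mu_n\|u_n\|_{s,\alpha}/\|u_n\|_{2,\alpha}\asymp\mu_n(n+1)^{\frac1{2s}-\frac14}$ for all $0<s\le\infty$ and all $n$.

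It then remains to combine the two observations. For (i), since $\mathcal H_\mu:f^\infty_\alpha\to F^p_\alpha$ is $2$-summing and $(x_n)\in\ell^{weak}_2(f^\infty_\alpha)$, we get $(\mathcal H_\mu x_n)\in\ell_2(F^p_\alpha)$, that is $\sum_n\|\mathcal H_\mu x_n\|_{p,\alpha}^2<\infty$, which by the asymptotics above reads $\sum_n\mu_n^2(n+1)^{\frac1p-\frac12}<\infty$. For (ii), since $\mathcal H_\mu:f^\infty_\alpha\to F^q_\alpha$ is $q$-summing and $(x_n)\in\ell^{weak}_q(f^\infty_\alpha)$, we get $(\mathcal H_\mu x_n)\in\ell_q(F^q_\alpha)$, that is $\sum_n\mu_n^q(n+1)^{\frac12-\frac q4}<\infty$, which is the assertion because $\tfrac12\bigl(1-\tfrac q2\bigr)=\tfrac12-\tfrac q4$.

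The only genuinely non-routine point is securing the correct operator ideal in part (ii): it is essential to have true $q$-summability and not merely $r$-summability for every $r>q$, since the latter would yield only $(\mu_n(n+1)^{\frac1{2q}-\frac14})_n\in\ell_r$ for all $r>q$, which is strictly weaker than membership in $\ell_q$ and cannot be recovered by a limiting argument $r\downarrow q$. This is precisely why one invokes the $(q,1)$-summing conclusion of \cite[Theorem 11.14]{Diestel} (cotype $q$ plus a $C(K)$-type domain), which automatically upgrades to $q$-summing. Everything else is the bookkeeping of monomial norms from (\ref{normaun}) together with the continuity of the inclusion $F^1_\alpha\hookrightarrow F^2_\alpha$.
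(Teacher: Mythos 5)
Part (i) of your argument is correct and is essentially the paper's own proof: you obtain $2$-summability of $\mathscr{H}_\mu$ from the $\mathcal{L}_\infty$-domain/cotype-$2$ result (the paper's (\ref{inclu1})), test it against the weak-$\ell_2$ sequence of normalized monomials $u_n/\|u_n\|_{2,\alpha}$, and read off the exponent from (\ref{normaun}); whether you view that sequence inside $f^\infty_\alpha$ (as you do, with a correct verification via $(f^\infty_\alpha)^\ast=F^1_\alpha$ and the inclusion $F^1_\alpha\subset F^2_\alpha$) or inside $F^2_\alpha$ (as the paper does, where it is an orthonormal basis) is immaterial.

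Part (ii), however, contains a genuine error at exactly the point you flag as ``the only genuinely non-routine point''. The inclusion $\Pi_{(q,1)}\subset\Pi_{(q,q)}=\Pi_q$ is false: it goes the other way. A $(q,1)$-summing operator is only required to map $\ell_1^{weak}$ sequences into $\ell_q$, and since $\ell_1^{weak}(X)\subset\ell_q^{weak}(X)$ this is a \emph{weaker} demand than $q$-summability, so $\Pi_q\subset\Pi_{(q,1)}$ and not conversely (the general inclusion theorem, \cite[Theorem 10.4]{Diestel}, would require $1-\frac1q\le 0$ to go in your direction). Consequently the cotype-$q$ theorem \cite[Theorem 11.14]{Diestel} does not deliver $q$-summability of $\mathscr{H}_\mu:f^\infty_\alpha\to F^q_\alpha$, and — as you yourself correctly observe — $r$-summability for every $r>q$ is not enough to conclude. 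The repair is immediate and is what the paper does: you never need full $q$-summability, because your test sequence $(x_n)$ already lies in $\ell_2^{weak}(f^\infty_\alpha)$, so it suffices to know that $\mathscr{H}_\mu$ is $(q,2)$-summing; this is exactly what \cite[Theorem 11.14]{Diestel} (equivalently the paper's (\ref{inclu2}) with $p=2$) provides for an operator from an $\mathcal{L}_\infty$-space into the cotype-$q$ space $F^q_\alpha$. Replacing ``$q$-summing applied to $(x_n)\in\ell_q^{weak}$'' by ``$(q,2)$-summing applied to $(x_n)\in\ell_2^{weak}$'' closes the gap and makes your argument coincide with the paper's.
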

    \begin{proof} (i) Using (\ref{inclu1}) we have that, in particular, $\mathcal H_\mu:F^2_\alpha\to F^p_\alpha$ is $2$-summing. Since $\Big(\sqrt{\frac{\alpha^n}{n!}}u_n\Big) \in \ell_2^{weak}(F^2_\alpha)$ then
    $\Big(\mu_n \sqrt{\frac{\alpha^n}{n!}} \|u_n\|_{p,\alpha}\Big)\in \ell_2$. Therefore $$\sum_{n=0}^\infty \mu_n^2 (n+1)^{1/p-1/2}<\infty.$$

    (ii) We now use (\ref{inclu2}) for $p=2.$ The same argument as in (i) using that $\mathcal H_\mu:F^{2}_\alpha\to F^q_\alpha$ is $(q,2)$-summing gives  $\Big(\mu_n(n+1)^{1/2(1/q-1/2)}\Big)\in \ell_q$. This gives the result.
    \end{proof}

Let us now reformulate
(\ref{esti1}), (\ref{eqqq}) and (\ref{equa1}) in terms of sequences in $\ell^{weak}_p(X)$.
\begin{lem} \label{newlema} The following holds.

\begin{equation}\label{debil1}\Big(\frac{u_n}{\|u_n\|_{1,\alpha}}\Big) \in \ell^{weak}_1(f^\infty_\alpha),
\end{equation}
    \begin{equation}\label{debil12}\Big(\frac{u_n}{\|u_n\|_{p',\alpha}}\Big)\in \ell^{weak}_{p'}(F^{p}_\alpha), \quad 2\leq p < \infty,
\end{equation}
and
    \begin{equation}\label{debil2infty}\Big(\frac{u_n}{\|u_n\|_{p,\alpha}}\Big)\in \ell^{weak}_{p'}(F^{p}_\alpha), \quad 1< p \le 2.
\end{equation}
\end{lem}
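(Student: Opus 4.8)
The plan is to establish the three displayed memberships by interpreting the known coefficient inequalities \eqref{esti1}, \eqref{eqqq} and \eqref{equa1} in dual form, using the fact that, under the pairing $\langle\cdot,\cdot\rangle_\alpha$, testing a sequence against all functionals on the relevant space amounts to a discrete inequality for the Taylor coefficients of the representing function. The key observation throughout is that $\langle u_n, g\rangle_\alpha = \overline{b_n}\,\frac{n!}{\alpha^n}$ for $g = \sum_k b_k u_k$, so $\langle u_n/\|u_n\|_{p,\alpha}, g\rangle_\alpha = \overline{b_n}\,\frac{n!}{\alpha^n}/\|u_n\|_{p,\alpha}$, and by \eqref{normaun} the modulus of this is comparable to $|b_n|\,\|u_n\|_{p',\alpha}\,(n+1)^{c}$ for an appropriate exponent $c$ depending on $p$.

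For \eqref{debil1}: the dual of $f^\infty_\alpha$ is $F^1_\alpha$, so I must show $\sum_n |\langle u_n/\|u_n\|_{1,\alpha}, g\rangle_\alpha| < \infty$ uniformly over $g$ in the unit ball of $F^1_\alpha$. Writing $g = \sum_n b_n u_n \in F^1_\alpha$, this sum is $\sum_n |b_n|\,\frac{n!}{\alpha^n}/\|u_n\|_{1,\alpha}$, which by \eqref{normaun} (namely $\|u_n\|_{1,\alpha}\asymp \sqrt{n!/\alpha^n}\,(n+1)^{1/4}$, hence $\frac{n!}{\alpha^n}/\|u_n\|_{1,\alpha}\asymp \|u_n\|_{1,\alpha}(n+1)^{-1/2}$) is comparable to $\sum_n |b_n|\,\|u_n\|_{1,\alpha}(n+1)^{-1/2}$; this is exactly controlled by $C\|g\|_{1,\alpha}$ via \eqref{esti1}. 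For \eqref{debil12} with $2\le p<\infty$: the dual of $F^p_\alpha$ is $F^{p'}_\alpha$ with $1<p'\le 2$, and I need $\big(\langle u_n/\|u_n\|_{p',\alpha}, g\rangle_\alpha\big)_n \in \ell_{p'}$ uniformly for $g$ in the unit ball of $F^{p'}_\alpha$. The coordinate at $n$ has modulus comparable to $|b_n|\,\|u_n\|_{p,\alpha}/\text{(correction)}$; carrying out the bookkeeping with \eqref{normaun} this reduces to the statement $\|(b_n\sqrt{n!/\alpha^n}\,(n+1)^{\frac12(\frac1{p'}-\frac12)})\|_{\ell_{p'}} \le C\|g\|_{p',\alpha}$, which is precisely \eqref{equa1} applied with exponent $p'\in[1,2]$ (equivalently, its dual form \eqref{eqqq} read for $g\in F^{p'}_\alpha$). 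For \eqref{debil2infty} with $1<p\le 2$: now $(F^p_\alpha)^* = F^{p'}_\alpha$ with $2\le p'<\infty$, and the required bound $\big(\langle u_n/\|u_n\|_{p,\alpha}, g\rangle_\alpha\big)_n\in\ell_{p'}$ uniformly over the unit ball of $F^{p'}_\alpha$ reduces, after the same translation, to $\|(b_n\sqrt{n!/\alpha^n}\,(n+1)^{\frac12(\frac1{p'}-\frac12)})\|_{\ell_{p'}}\le C\|g\|_{p',\alpha}$, which is \eqref{eqqq} for the exponent $p'\ge 2$.

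The main point requiring care, rather than a genuine obstacle, is the exponent bookkeeping: one must verify that the power of $(n+1)$ produced by combining $\frac{n!}{\alpha^n}/\|u_n\|_{p,\alpha}$ with the target weight $\|u_n\|_{p',\alpha}$ in \eqref{normaun} matches exactly the weight $(n+1)^{\frac12(\frac12-\frac1r)}$ appearing in Lemma \ref{coefi}. Concretely, since $\frac{n!}{\alpha^n} = \|u_n\|_{2,\alpha}^2$ and $\|u_n\|_{r,\alpha}\asymp \|u_n\|_{2,\alpha}(n+1)^{\frac1{2r}-\frac14}$, one gets $\frac{n!}{\alpha^n}/\|u_n\|_{p,\alpha}\asymp \|u_n\|_{2,\alpha}(n+1)^{\frac14-\frac1{2p}} = \|u_n\|_{p',\alpha}(n+1)^{\frac14-\frac1{2p}-\frac1{2p'}+\frac14} = \|u_n\|_{p',\alpha}(n+1)^{\frac12-\frac1{2p}-\frac1{2p'}}$, and since $\frac1p+\frac1{p'}=1$ this exponent simplifies to $0$; a small additional factor then recovers the weight in Lemma \ref{coefi}. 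Once these identifications are made, each of \eqref{debil1}, \eqref{debil12}, \eqref{debil2infty} follows immediately by taking suprema over the unit ball of the dual space, and the proof is complete.
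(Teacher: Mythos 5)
Your proof is correct and follows essentially the same route as the paper's: represent functionals on $F^p_\alpha$ (resp.\ $f^\infty_\alpha$) by $g=\sum_n b_n u_n$ in the dual Fock space, use $\langle u_n,g\rangle_\alpha=\overline{b_n}\,\frac{n!}{\alpha^n}$ together with $\|u_n\|_{p,\alpha}\|u_n\|_{p',\alpha}\asymp \frac{n!}{\alpha^n}$, and invoke \eqref{esti1}, \eqref{equa1} and \eqref{eqqq}. The only blemish is a sign slip in the exponent you display for the case $2\le p<\infty$: the required weight is $(n+1)^{\frac12(\frac12-\frac1{p'})}$, which is exactly what \eqref{equa1} applied with exponent $p'\in(1,2]$ provides (note \eqref{eqqq} is not the applicable estimate there, since $p'\le 2$); your identification of the needed inequality is nevertheless correct.
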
 \begin{proof}
For every  $y^\ast\in (F^p_\alpha)^\ast$ (respectively $(f^\infty_\alpha)^\ast$) there is $g = \displaystyle\sum_{n=0}^\infty b_n u_n\in F^{p'}_\alpha$ (respectively $F^1_\alpha$) such that $\|y^\ast\|_{(F^p_\alpha)^\ast} = \|g\|_{p',\alpha}$ and $y^\ast(f) = \langle f, g\rangle_\alpha$ for every $f\in F^p_\alpha.$
Now observe that  $\langle f, u_n\rangle_{\alpha}= a_n \frac{n!}{\alpha^n}$ whenever $f(z)=\sum_{n=0}^\infty a_n z^n$, and
$$\|u_n\|_{p,\alpha}\|u_n\|_{p',\alpha}\asymp \frac{n!}{\alpha^n}, \quad 1\le p \le \infty.$$

Assume first $y^\ast\in (f^\infty_\alpha)^\ast$. Then
$$
\left|y^\ast\Big(\frac{u_n}{\|u_n\|_{1,\alpha}}\Big)\right|  \asymp \frac{|b_n|}{\|u_n\|_{1,\alpha}}\frac{ n!}{\alpha^n}\asymp |b_n|\|u_n\|_{\infty,\alpha} \asymp |b_n|\|u_n\|_{1,\alpha}(n+1)^{-1/2}.
$$ Hence using (\ref{esti1}) we conclude (\ref{debil1}).

Assume now that $y^\ast\in (F^p_\alpha)^\ast$ for $2\leq p<\infty.$ Then
$$
\left|y^\ast\Big(\frac{u_n}{\|u_n\|_{p',\alpha}}\Big)\right|  \asymp \frac{|b_n|}{\|u_n\|_{p',\alpha}}\frac{ n!}{\alpha^n}\asymp |b_n|\|u_n\|_{p,\alpha} \asymp |b_n|\sqrt{\frac{n!}{\alpha^n}}(n+1)^{1/2(1/2-1/p')}.
$$ Hence, since $1<p'\le 2$, using (\ref{equa1}) we conclude (\ref{debil12}).

(\ref{debil2infty}) follows similarly using now (\ref{eqqq}) since $p'\ge 2$.

\end{proof}

\begin{proposition}\label{proppq} Let $1 < p \le \infty$ and $p'\leq q$
and  $\mathscr{H}_\mu:F^{p}_\alpha\to F^{p}_\alpha$ is $(q,p')$-summing. Then
$$
\sum_{n=0}^\infty \mu_n^{q} (n+1)^{q(\frac{1}{\max\{p,2\}}-\frac{1}{2})} <\infty.$$

\end{proposition}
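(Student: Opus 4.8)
The plan is to exploit the fact that the monomials are eigenfunctions of $\mathscr{H}_\mu$, i.e. $\mathscr{H}_\mu(u_n)=\mu_n u_n$, and to feed into the $(q,p')$-summing hypothesis the explicit weakly $\ell_{p'}$-summable sequences of normalized monomials supplied by Lemma \ref{newlema}. Since $\mathscr{H}_\mu$ is $(q,p')$-summing, $(x_n)\in\ell^{weak}_{p'}(F^p_\alpha)$ forces $(\mathscr{H}_\mu(x_n))\in\ell_q(F^p_\alpha)$; taking $x_n$ to be $u_n$ divided by a suitable norm, $\|\mathscr{H}_\mu(x_n)\|_{p,\alpha}=\mu_n\|u_n\|_{p,\alpha}/\|u_n\|_{s,\alpha}$ for the relevant normalizing exponent $s$, which by \eqref{normaun} is comparable to a power of $n+1$.

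First I would split into the three ranges $1<p\le 2$, $2\le p<\infty$ and $p=\infty$. For $1<p\le 2$ I use \eqref{debil2infty}: $(u_n/\|u_n\|_{p,\alpha})\in\ell^{weak}_{p'}(F^p_\alpha)$, so $(\mu_n u_n/\|u_n\|_{p,\alpha})\in\ell_q(F^p_\alpha)$, i.e. $(\mu_n)\in\ell_q$; since $\max\{p,2\}=2$ the exponent of $(n+1)$ in the statement is $0$ and we are done. For $2\le p<\infty$ I use \eqref{debil12}: $(u_n/\|u_n\|_{p',\alpha})\in\ell^{weak}_{p'}(F^p_\alpha)$, hence $\sum_n \mu_n^q\,(\|u_n\|_{p,\alpha}/\|u_n\|_{p',\alpha})^q<\infty$; by \eqref{normaun} and $1/p'=1-1/p$ one has $\|u_n\|_{p,\alpha}/\|u_n\|_{p',\alpha}\asymp (n+1)^{1/p-1/2}$, and since $\max\{p,2\}=p$ this is precisely the claimed series.

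For $p=\infty$ we have $p'=1$, and Lemma \ref{newlema} provides $(u_n/\|u_n\|_{1,\alpha})\in\ell^{weak}_1(f^\infty_\alpha)$ rather than directly in $\ell^{weak}_1(F^\infty_\alpha)$. Here I would simply observe that $f^\infty_\alpha$ sits inside $F^\infty_\alpha$ with the same norm, so restricting functionals from $(F^\infty_\alpha)^\ast$ to $(f^\infty_\alpha)^\ast$ shows that a weakly $\ell_1$-summable sequence in $f^\infty_\alpha$ is also weakly $\ell_1$-summable in $F^\infty_\alpha$. Applying the $(q,1)$-summing hypothesis then gives $\sum_n \mu_n^q\,(\|u_n\|_{\infty,\alpha}/\|u_n\|_{1,\alpha})^q<\infty$, and $\|u_n\|_{\infty,\alpha}/\|u_n\|_{1,\alpha}\asymp (n+1)^{-1/2}$ by \eqref{normaun}, which matches the statement under the convention $1/\max\{\infty,2\}=0$. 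In all three cases the $n=0$ term is harmless, which is why one works with $n+1$.

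The calculations are routine; the only points that need genuine care are (a) choosing, in each range of $p$, the correct normalization of $u_n$ so that the normalized sequence lands in $\ell^{weak}_{p'}(F^p_\alpha)$ — the normalizing norm jumps from $\|u_n\|_{p,\alpha}$ to $\|u_n\|_{p',\alpha}$ as $p$ crosses $2$ — and (b) the passage from $f^\infty_\alpha$ to $F^\infty_\alpha$ in the endpoint case. I do not expect a real obstacle beyond this bookkeeping.
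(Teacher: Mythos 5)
Your proof is correct and follows essentially the same route as the paper: test the $(q,p')$-summing hypothesis on the normalized monomials supplied by Lemma \ref{newlema}, split at $p=2$ according to whether the normalization is $\|u_n\|_{p,\alpha}$ or $\|u_n\|_{p',\alpha}$, and read off the exponent from the norm ratios in \eqref{normaun}. Your extra care at the endpoint $p=\infty$ — passing from $\ell^{weak}_1(f^\infty_\alpha)$ to $\ell^{weak}_1(F^\infty_\alpha)$ by restricting functionals — addresses a point the paper leaves implicit, and you handle it correctly.
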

\begin{proof}
Assume $2<p\le \infty$.
 Using (\ref{debil12}) or (\ref{debil1}) in Lemma \ref{newlema} we obtain $\Big(\mathscr{H}_\mu( \frac{u_n}{\|u_n\|_{p',\alpha}})\Big)\in \ell_{q}(F^{p}_\alpha)$. Now  use that $ \frac{\|u_n\|_{p,\alpha}}{\|u_n\|_{p',\alpha}}\asymp (n+1)^{\frac{1}{p}-\frac{1}{2}}$ to complete the implication.

The case $1<p\le2$ follows similarly using  (\ref{debil2infty}).
\end{proof}

\par\medskip
\begin{theorem} \label{inftyuno} Let $\mu$ be a positive Borel measure defined on $(0,\infty)$.

$\mathscr{H}_\mu:f^\infty_\alpha\to F^1_\alpha$ is absolutely summing if and only if
$\sum_{n=0}^\infty \mu_n<\infty$.
\end{theorem}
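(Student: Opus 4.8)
The plan is to prove the two implications separately, the bridge between them being the classical fact — already exploited in this section — that $f^\infty_\alpha$ is isomorphic to $c_0$ and $F^1_\alpha$ is isomorphic to $\ell^1$; consequently $f^\infty_\alpha$ is an $\mathcal{L}_\infty$-space and $F^1_\alpha$ is an $\mathcal{L}_1$-space, which puts Grothendieck's theorem at our disposal.

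For the necessity I would test the defining property of absolutely summing operators on a suitable weakly $\ell^1$ sequence. If $\mathscr{H}_\mu:f^\infty_\alpha\to F^1_\alpha$ is absolutely summing it is in particular bounded, so $\mu_n<\infty$ for every $n$ and $\mathscr{H}_\mu(u_n)=\mu_n u_n$. By (\ref{debil1}) in Lemma \ref{newlema} the sequence $\bigl(u_n/\|u_n\|_{1,\alpha}\bigr)_n$ lies in $\ell^{weak}_1(f^\infty_\alpha)$, so its image $\bigl(\mu_n\, u_n/\|u_n\|_{1,\alpha}\bigr)_n$ must lie in $\ell_1(F^1_\alpha)$; since each term has $F^1_\alpha$-norm exactly $\mu_n$, this forces $\sum_{n=0}^\infty\mu_n<\infty$.

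For the sufficiency, assume $\sum_{n=0}^\infty\mu_n<\infty$. Theorem \ref{case1} gives that $\mathscr{H}_\mu$ maps $F^\infty_\alpha$ boundedly into $F^1_\alpha$, and restriction to the subspace $f^\infty_\alpha\subset F^\infty_\alpha$ shows $\mathscr{H}_\mu:f^\infty_\alpha\to F^1_\alpha$ is a well-defined bounded operator. Since $f^\infty_\alpha$ is an $\mathcal{L}_\infty$-space and $F^1_\alpha$ is an $\mathcal{L}_1$-space, Grothendieck's theorem (see \cite{Diestel}), according to which every bounded operator from an $\mathcal{L}_\infty$-space into an $\mathcal{L}_1$-space is absolutely summing, shows at once that $\mathscr{H}_\mu:f^\infty_\alpha\to F^1_\alpha$ is absolutely summing.

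The genuinely non-routine ingredient is the use of Grothendieck's theorem in the sufficiency, and that is where I expect the main difficulty to lie: a direct argument through a nuclear (diagonal) representation of $\mathscr{H}_\mu$ on the monomial basis cannot work, because the ratio $\|u_n\|_{1,\alpha}/\|u_n\|_{\infty,\alpha}$ is of order $\sqrt{n+1}$ and such a representation would demand the strictly stronger condition $\sum_n\mu_n\sqrt{n+1}<\infty$. Everything else reduces to Theorem \ref{case1}, Lemma \ref{newlema}, and the identity $\mathscr{H}_\mu(u_n)=\mu_n u_n$.
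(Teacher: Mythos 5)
Your necessity argument is exactly the paper's: test the summing property on the weakly $\ell^1$ sequence $\bigl(u_n/\|u_n\|_{1,\alpha}\bigr)$ from (\ref{debil1}) and read off $\sum_n\mu_n<\infty$ from $\|\mu_n u_n/\|u_n\|_{1,\alpha}\|_{1,\alpha}=\mu_n$. That half is correct.

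The sufficiency half has a genuine gap: the form of Grothendieck's theorem you invoke is not true. Grothendieck's theorem makes every bounded operator from an $\mathcal{L}_1$-space into a Hilbert space absolutely summing (this is the form the paper uses to get (\ref{inclu0})), and it makes every bounded operator from an $\mathcal{L}_\infty$-space into an $\mathcal{L}_1$-space (or $\mathcal{L}_2$-space) \emph{$2$-summing} — which is precisely why the paper only claims $\Pi_2$ in (\ref{inclu1}) — but not $1$-summing. Indeed, the absolutely summing operators from a $C(K)$-space (such as $c_0\cong f^\infty_\alpha$) coincide with the integral ones, and there are bounded, non-integral operators from $c_0$ into $\ell_1$: normalized Hadamard matrices $\frac1nH_n:\ell_\infty^n\to\ell_1^n$ have $\pi_1$-norm of order $\sqrt{n}$ times their operator norm, so a suitable direct sum is bounded but not $1$-summing. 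There is also internal evidence that your argument proves too much: combined with your own necessity step it would show that mere boundedness of $\mathscr{H}_\mu:f^\infty_\alpha\to F^1_\alpha$ forces $\sum_n\mu_n<\infty$, i.e. it would close the gap deliberately left open in Theorem \ref{case1} between the sufficient condition $\sum_n\mu_n<\infty$ and the necessary condition $\sum_n\mu_n(n+1)^{-1/2}<\infty$. (Your observation that a diagonal nuclear representation cannot work, since it would require $\sum_n\mu_n\sqrt{n+1}<\infty$, is correct; the issue is that the replacement you chose is not available.) What the paper actually does for sufficiency is a direct estimate: writing $f(z)=\langle f,\tilde K_z\rangle_\alpha e^{\frac{\alpha}{2}|z|^2}$ with $\tilde K_z=K_\alpha(z,\cdot)/\|K_\alpha(z,\cdot)\|_{1,\alpha}$, one bounds $\sum_k|\langle f_k,\tilde K_{z/t}\rangle_\alpha|$ uniformly by $\|(f_k)\|_{\ell_1^{weak}(f^\infty_\alpha)}$ and integrates, the hypothesis entering through $\int_1^\infty(1-t^{-2})^{-1}\frac{d\mu(t)}{t}=\sum_n\mu_{2n}<\infty$ exactly as in the proof of Theorem \ref{case1}. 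You would need to supply an argument of this kind (in effect a Pietsch domination with an explicit measure) to repair the proof.
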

\begin{proof} Assume that $\mathscr{H}_\mu:f^\infty_\alpha\to F^1_\alpha$ is absolutely summing.
Let $(y_n)=(\frac{u_n}{\|u_n\|_{1,\alpha}})\subset f^\infty_\alpha$. Hence
 using    (\ref{debil1}) and the assumption we conclude that $$\left(\mathscr{H}_\mu(y_n)\right)= \left(\mu_n\frac{u_n}{\|u_n\|_{1,\alpha}}\right)\in \ell_1(F^1_\alpha).$$
This gives the direct implication.

Assume now $\sum_{n=0}^\infty \mu_n<\infty$ and let $(f_k)\in \ell_1^{weak}(f^\infty_\alpha)$ be given.
We know that $\mathscr H_\mu:f^\infty_\alpha\to F^1_\alpha$ is bounded and $\mathscr H_\mu(f)(z)=\int_1^\infty f(\frac{z}{t})\frac{d\mu(t)}{t}$. We write $f(z)=\langle f,  \tilde K_z\rangle_\alpha e^{\frac{\alpha}{2}|z|^2}$ where $\tilde K_z(w)=\frac{K_\alpha(z,w)}{\|K_\alpha(z,\cdot)\|_{1,\alpha}}$, which allows to write
\begin{eqnarray*}
\sum_{k=1}^m \|\mathscr H_\mu(f_k)\|_{1,\alpha}
&\le& C\sum_{k=1}^m \int_1^\infty(\int_\C|f_k(\frac{z}{t})|e^{-\frac{\alpha}{2}|z|^2} dA(z))\frac{d\mu(t)}{t}\\
&=& C\int_1^\infty\int_\C\Big(\sum_{k=1}^m|\langle f_k, \tilde K_{z/t}\rangle_\alpha  |\Big)e^{\frac{\alpha}{2t^2}|z|^2} e^{-\frac{\alpha}{2}|z|^2}dA(z) \frac{d\mu(t)}{t}\\
&\le& C\|(f_k)\|_{\ell^{weak}_1(f^\infty_\alpha)} \int_1^\infty\int_\C e^{-\frac{\alpha}{2}|z|^2(1-\frac{1}{t^2})}dA(z) \frac{d\mu(t)}{t}\\
&\le& C\|(f_k)\|_{\ell^{weak}_1(f^\infty_\alpha)} \int_1^\infty(1-\frac{1}{t^2})^{-1} \frac{d\mu(t)}{t}\\
&\le& C\|(f_k)\|_{\ell^{weak}_1(f^\infty_\alpha)} \sum_{n=0}^\infty \mu_{2n}.
\end{eqnarray*}
The proof is now complete.
\end{proof}

\begin{theorem} \label{nuclear} Let $\mu$ be a positive Borel measure defined on $(0,\infty)$. Then each of the following statements implies the one that follows it.

(i) $\sum_{n=0}^\infty \mu_n<\infty$.

(ii) $\mathscr{H}_\mu:f^\infty_\alpha\to f^\infty_\alpha$ is nuclear.

(iii) $\mathscr{H}_\mu:f^\infty_\alpha\to f^\infty_\alpha$ is absolutely summing.

(iv) $\sum_{n=0}^\infty \mu_n (n+1)^{-1/2}<\infty$.

\end{theorem}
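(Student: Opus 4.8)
The plan is to prove the three implications in order; the only one requiring real work is $(i)\Rightarrow(ii)$, while $(ii)\Rightarrow(iii)$ is a general fact about nuclear operators and $(iii)\Rightarrow(iv)$ follows by feeding a concrete weakly $\ell_1$ sequence into the summing hypothesis.

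For $(i)\Rightarrow(ii)$ I would first record that $\sum_{n=0}^\infty\mu_n<\infty$ forces $\mu(0,1)=0$, so by Theorem \ref{mainteo} the operator $\mathscr{H}_\mu$ is bounded on every $F^p_\alpha$ and $\mathscr{H}_\mu(u_n)=\mu_n u_n$. Then I would write down an explicit nuclear representation. Put
$$y_n:=\frac{u_n}{\|u_n\|_{\infty,\alpha}}\in f^\infty_\alpha,\qquad g_n:=\mu_n\,\|u_n\|_{\infty,\alpha}\,\frac{\alpha^n}{n!}\,u_n\in F^1_\alpha=(f^\infty_\alpha)^\ast .$$
Then $\|y_n\|_{\infty,\alpha}=1$, and using $\langle f,u_n\rangle_\alpha=a_n\,n!/\alpha^n$ for $f=\sum_n a_n u_n$ one checks that $(g_n\otimes y_n)(f)=\mu_n a_n u_n$. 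Since $\|u_n\|_{\infty,\alpha}\,\|u_n\|_{1,\alpha}\asymp n!/\alpha^n$ by (\ref{normaun}), one gets $\|g_n\|_{F^1_\alpha}\asymp\mu_n$, so $(g_n)\in\ell_1\bigl((f^\infty_\alpha)^\ast\bigr)$ and the series $\sum_n g_n\otimes y_n$ converges in operator norm to a bounded operator $S:f^\infty_\alpha\to f^\infty_\alpha$. For $f=\sum_n a_n u_n\in f^\infty_\alpha$ this gives $Sf=\sum_n\mu_n a_n u_n$ with convergence in $f^\infty_\alpha$; since $\mathscr{H}_\mu(f)(z)=\sum_n\mu_n a_n z^n$ by Proposition \ref{prop:continuity-entire} and $f^\infty_\alpha$ embeds continuously into $H(\C)$, we conclude $Sf=\mathscr{H}_\mu(f)$. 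Hence $\mathscr{H}_\mu$ maps $f^\infty_\alpha$ into itself and the displayed series exhibits it as a nuclear operator.

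The implication $(ii)\Rightarrow(iii)$ is immediate, since every nuclear operator between Banach spaces is absolutely summing, i.e. $N_1(X,Y)\subset\Pi_1(X,Y)$ (\cite[Proposition 5.23]{Diestel}). For $(iii)\Rightarrow(iv)$ I would invoke (\ref{debil1}) of Lemma \ref{newlema}, which says $\bigl(u_n/\|u_n\|_{1,\alpha}\bigr)\in\ell^{weak}_1(f^\infty_\alpha)$; if $\mathscr{H}_\mu:f^\infty_\alpha\to f^\infty_\alpha$ is absolutely summing then $\bigl(\mathscr{H}_\mu(u_n/\|u_n\|_{1,\alpha})\bigr)=\bigl(\mu_n u_n/\|u_n\|_{1,\alpha}\bigr)\in\ell_1(f^\infty_\alpha)$, that is, $\sum_{n=0}^\infty\mu_n\,\|u_n\|_{\infty,\alpha}/\|u_n\|_{1,\alpha}<\infty$; and $\|u_n\|_{\infty,\alpha}/\|u_n\|_{1,\alpha}\asymp(n+1)^{-1/2}$ by (\ref{normaun}), which is exactly $(iv)$.

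The step I expect to be the main obstacle is $(i)\Rightarrow(ii)$: the delicate point is to choose the normalisations of $g_n$ and $y_n$ so that simultaneously $(y_n)$ is bounded in $f^\infty_\alpha$, $\sum_n\|g_n\|\asymp\sum_n\mu_n$, and the nuclear series genuinely sums to $\mathscr{H}_\mu$ and not merely to an operator agreeing with it on monomials — the latter being settled by combining the operator-norm convergence of the series with the description of $\mathscr{H}_\mu(f)$ through its Taylor coefficients from Proposition \ref{prop:continuity-entire} (which simultaneously yields that $\mathscr{H}_\mu$ preserves $f^\infty_\alpha$).
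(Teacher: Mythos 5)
Your proposal is correct and follows essentially the same route as the paper: the same rank-one decomposition $\sum_n g_n\otimes y_n$ with $y_n=u_n/\norm{u_n}_{\infty,\alpha}$ for $(i)\Rightarrow(ii)$, the standard inclusion $N_1\subset\Pi_1$ for $(ii)\Rightarrow(iii)$, and the weak-$\ell_1$ sequence $\bigl(u_n/\norm{u_n}_{1,\alpha}\bigr)$ from Lemma \ref{newlema} for $(iii)\Rightarrow(iv)$. Your extra care in checking that the operator-norm limit of the nuclear series actually equals $\mathscr{H}_\mu$ (rather than merely agreeing on monomials) is a detail the paper leaves implicit, but it is the same argument.
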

\begin{proof}
(i) $\Longrightarrow$ (ii). We write $\mathscr{H}_\mu =\sum_{n=0}^\infty x_n^*\otimes y_n$ where
\begin{equation}
 x_n^*= \mu_n\|u_n\|_{\infty,\alpha} \frac{\alpha^n}{n!} u_n, \qquad  y_n=\frac{u_n}{\|u_n\|_{\infty,\alpha}}.
\end{equation}
The series converges absolutely, using that $\sum_{n=0}^\infty \mu_n<\infty$ because $$\|x_n^*\otimes y_n\|_{f^\infty_\alpha\to f^\infty_\alpha}=\|x_n^*\|_{1,\alpha}\|y_n\|_{\infty,\alpha}\asymp \mu_n.$$
Using now that $\|y_n\|_{\infty,\alpha}=1$ and  $\|x_n^*\|_{1,\alpha}= \mu_n\|u_n\|_{\infty,\alpha} \frac{\alpha^n}{n!} \|u_n\|_{1,\alpha}\asymp \mu_n$ we obtain that $\mathscr H_\mu$ is nuclear.

(ii) $\Longrightarrow$ (iii) This is well known but we include the easy proof for completeness. Assume   $$\mathscr{H}_\mu =\sum_{n=0}^\infty F_n\otimes G_n$$ with $(F_n)\in \ell_1(F^1_\alpha)$ and $\sup \|G_n\|_{\infty,\alpha}<\infty.$ Let $(f_k)\in \ell^{weak}_1(f^\infty_\alpha)$ be given. Using that $\ell_1(X)\subset (\ell_\infty(X^*))^*$ isometrically we can write
 that \begin{eqnarray*}\sum_{k=1}^\infty\|\mathscr{H}_\mu(f_k)\|_{\infty,\alpha}&=& \sup_{\|g_k\|_{1,\alpha}\leq 1}\Big|\sum_{k=1}^\infty \langle\mathscr{H}_\mu(f_k),g_k\rangle_\alpha\Big|\\
&= & \sup_{\|g_k\|_{1,\alpha}\leq 1}\Big|\sum_{k=1}^\infty \sum_{n=0}^\infty \langle F_n,f_k \rangle_\alpha \langle G_n,g_k\rangle_\alpha\Big|\\
&\le & \sup_{\|g_k\|_{1,\alpha}\leq 1}\sum_{n=0}^\infty\Big|\langle F_n,\sum_{k=1}^\infty \langle G_n,g_k\rangle_\alpha f_k\rangle_\alpha\Big|\\
&\le & \sup_{\|g_k\|_{1,\alpha}\leq 1}\sum_{n=0}^\infty \|F_n\|_{1,\alpha} \|\sum_{k=1}^\infty \langle G_n,g_k\rangle f_k\|_{\infty,\alpha}\\
&\lesssim & \|(F_n)|_{\ell_1(F^1_\alpha)} \|(f_k)\|_{\ell^{weak}_1(f^\infty_\alpha)}
 \end{eqnarray*}
 where the last inequality follows using that $\|(x_n)\|_{\ell^{weak}_1(X)}= \sup_{|\lambda_n|\leq1} \| \sum_{n=0}^\infty \lambda_n x_n\|_X$ and $|\langle G_n, g_k\rangle_\alpha| \leq \|G_n\|_{\infty,\alpha}\lesssim 1.$

(iii) $\Longrightarrow$ (iv) Select as in Theorem \ref{inftyuno} $y_n=\frac{u_n}{\|u_n\|_{1,\alpha}}$. Hence $$\sum_{n=0}^\infty\|\Big(\mathcal H(y_n)\Big)\|_{\infty,\alpha}= \sum_{n=0}^\infty\mu_n\frac{\|u_n\|_{\infty,\alpha}}{\|u_n\|_{1,\alpha}}\asymp \sum_{n=0}^\infty\mu_n (n+1)^{-1/2}<\infty.$$

\end{proof}

Let us now give some conditions connecting with $p$-nuclearity.
\begin{proposition} \label{pro1} Let $\mu$ be a Borel positive measure in $(0,\infty)$.

  (i) If  $\displaystyle\sum_{n=0}^\infty \mu_n < \infty$ then $\mathscr{H}_\mu:F^q_\alpha\to F^q_\alpha$ is nuclear for any $1\le q\le \infty$.

  (ii) If $1 < p\le q \le 2$ and  $\displaystyle\sum_{n=0}^\infty \mu_n^p < \infty$ then $\mathscr{H}_\mu:F^q_\alpha\to F^q_\alpha$ is $p$-nuclear.

  (iii) If $\max\{p,2\}\le q$  and $\displaystyle\sum_{n=0}^\infty \mu_n^p(n+1)^{p(\frac{1}{2}-\frac{1}{q})}<\infty$ then $\mathscr{H}_\mu:F^q_\alpha\to F^q_\alpha$ is $p$-nuclear.

\end{proposition}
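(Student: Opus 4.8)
The plan is to diagonalize $\mathscr{H}_\mu$ and split it into rank-one pieces that are tailored to the weak-$\ell_{p'}$ sequences provided by Lemma \ref{newlema}. In each of the three cases the hypothesis forces $\sup_n\mu_n<\infty$, so by Theorem \ref{mainteo} the operator is bounded on $F^q_\alpha$ and $\mathscr{H}_\mu f=\sum_{n}\mu_n a_n u_n$ for $f=\sum_n a_n u_n$. Since $a_n=\frac{\alpha^n}{n!}\langle f,u_n\rangle_\alpha$, for any positive scalars with $c_nd_n=\mu_n$ we obtain the representation
$$\mathscr{H}_\mu=\sum_{n=0}^\infty x_n^\ast\otimes y_n,\qquad x_n^\ast:=c_n\frac{\alpha^n}{n!}\langle\cdot,u_n\rangle_\alpha,\quad y_n:=d_n u_n,$$
where $x_n^\ast$ is identified with the element $c_n\frac{\alpha^n}{n!}u_n$ of $F^{q'}_\alpha\hookrightarrow (F^q_\alpha)^\ast$ (for $q=\infty$, of $F^1_\alpha\hookrightarrow(F^\infty_\alpha)^\ast$). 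Using $\|u_n\|_{q,\alpha}\|u_n\|_{q',\alpha}\asymp n!/\alpha^n$ one gets $\|x_n^\ast\|_{(F^q_\alpha)^\ast}\asymp c_n/\|u_n\|_{q,\alpha}$ and $\|y_n\|_{q,\alpha}=d_n\|u_n\|_{q,\alpha}$; by the description of $p$-nuclear operators recalled in this section it then suffices to choose $c_n,d_n$ so that $(y_n)\in\ell^{weak}_{p'}(F^q_\alpha)$ and $(x_n^\ast)\in\ell_p((F^q_\alpha)^\ast)$, convergence of the series in operator norm being automatic from the elementary estimate $\|\sum_n\lambda_n y_n\|\le\|(\lambda_n)\|_{\ell_p}\,\|(y_n)\|_{\ell^{weak}_{p'}}$.

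For (i) I would take $d_n=1/\|u_n\|_{q,\alpha}$, so $\|y_n\|_{q,\alpha}=1$ and $(y_n)\in\ell^{weak}_\infty(F^q_\alpha)$, while $\|x_n^\ast\|\asymp\mu_n$; hence $(x_n^\ast)\in\ell_1$ exactly when $\sum_n\mu_n<\infty$, giving nuclearity. For (ii) I keep $d_n=1/\|u_n\|_{q,\alpha}$: since $1<q\le 2$, (\ref{debil2infty}) gives $(y_n)=(u_n/\|u_n\|_{q,\alpha})\in\ell^{weak}_{q'}(F^q_\alpha)$, and as $p\le q$ we have $q'\le p'$, so $\ell^{weak}_{q'}(F^q_\alpha)\subset\ell^{weak}_{p'}(F^q_\alpha)$; since still $\|x_n^\ast\|\asymp\mu_n$, we get $(x_n^\ast)\in\ell_p$ iff $\sum_n\mu_n^p<\infty$. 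For (iii) I would instead take $d_n=1/\|u_n\|_{q',\alpha}$: by (\ref{debil12}) (and (\ref{debil1}) when $q=\infty$) we have $(y_n)=(u_n/\|u_n\|_{q',\alpha})\in\ell^{weak}_{q'}(F^q_\alpha)\subset\ell^{weak}_{p'}(F^q_\alpha)$, again because $q'\le p'$; and $\|x_n^\ast\|\asymp c_n/\|u_n\|_{q,\alpha}=\mu_n\|u_n\|_{q',\alpha}/\|u_n\|_{q,\alpha}\asymp\mu_n(n+1)^{\frac12-\frac1q}$ by (\ref{normaun}), whence $(x_n^\ast)\in\ell_p$ iff $\sum_n\mu_n^p(n+1)^{p(\frac12-\frac1q)}<\infty$.

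There is no serious obstacle here; the two points that need a little care are purely bookkeeping. First, the monotonicity $\ell^{weak}_r(Y)\subset\ell^{weak}_s(Y)$ for $r\le s$ (immediate from $\ell_r\subset\ell_s$), which is what lets me downgrade the weak summability delivered by Lemma \ref{newlema} to the exponent $p'$ actually needed. Second, the uniform comparison $\|x_n^\ast\|_{(F^q_\alpha)^\ast}\asymp c_n/\|u_n\|_{q,\alpha}$, which rests on $\|u_n\|_{q,\alpha}\|u_n\|_{q',\alpha}\asymp n!/\alpha^n$ and (\ref{normaun}) — exactly the computations already carried out in the proof of Lemma \ref{newlema}; in the case $q=\infty$ only the upper bound is used, since the embedding $F^1_\alpha\hookrightarrow(F^\infty_\alpha)^\ast$ is merely bounded, and that suffices for an upper estimate of the $p$-nuclear norm. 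Matching $c_n,d_n$ to the sequences of Lemma \ref{newlema} is thus the only real choice to be made.
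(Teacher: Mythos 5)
Your proposal is correct and follows essentially the same route as the paper's proof: the same rank-one decomposition $\mathscr{H}_\mu=\sum_n x_n^\ast\otimes y_n$ along the monomials, with $y_n=u_n/\|u_n\|_{q,\alpha}$ in (i)--(ii) and $y_n=u_n/\|u_n\|_{q',\alpha}$ in (iii), the same appeal to (\ref{debil1}), (\ref{debil12}), (\ref{debil2infty}) of Lemma \ref{newlema} together with the monotonicity $\ell^{weak}_{q'}\subset\ell^{weak}_{p'}$, and the same asymptotics $\|u_n\|_{q,\alpha}\|u_n\|_{q',\alpha}\asymp n!/\alpha^n$ and $\|u_n\|_{q',\alpha}/\|u_n\|_{q,\alpha}\asymp(n+1)^{\frac12-\frac1q}$. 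The only cosmetic difference is that you justify convergence of the series by the H\"older-type tail estimate, where the paper invokes the Schauder basis property of $(u_n)$ in $F^q_\alpha$ for $1<q\le 2$ in case (ii); both are adequate.
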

\begin{proof}  Under the hypothesis of (i), (ii) or (iii), it follows from Theorem \ref{mainteo} that  $\mathscr{H}_\mu$ is a bounded operator on $F^q_\alpha$ and $$\left(\mathscr{H}_\mu f\right)(z) = \sum_{n=0}^\infty \mu_n \frac{\alpha^n}{n!} \langle f, u_n\rangle_\alpha u_n(z)=\sum_{n=0}^\infty x_n^*\otimes y_n(f)$$ for every $f = \displaystyle\sum_{n=0}^\infty a_n u_n\in F^q_\alpha,$ where
	we put $y_n:= \frac{u_n}{\|u_n\|_{q,\alpha}}\in F^q_\alpha$ and  $x_n^\ast\in (F^q_\alpha)^\ast$ defined by
$$
x_n^\ast(f):= \mu_n \frac{\alpha^n}{n!} \|u_n\|_{q,\alpha} \langle f, u_n\rangle_\alpha.$$ Then $\|y_n\|_{q,\alpha} = 1$ and \begin{equation}\label{normaxn}\|x_n^\ast\|_{(F^q_\alpha)^\ast} = \mu_n \frac{\alpha^n}{n!} \|u_n\|_{q,\alpha}\|u_n\|_{q',\alpha} \asymp \mu_n.
\end{equation} 	

(i)  The condition $\displaystyle\sum_{n=0}^\infty \mu_n < \infty $ gives that the series $\sum_{n=0}^\infty x_n^*(f)y_n$ converges absolutely in $F^q_\alpha$ for every $1\le q\le \infty.$ Hence $\mathscr{H}_\mu f = \sum_{n=0}^\infty x_n^\ast(f) y_n$ with convergence in $F^q_\alpha$ and $\mathscr{H}_\mu$ is nuclear. We observe that in the case $q = \infty$ we even conclude that $\mathscr{H}_\mu(F^\infty_\alpha)\subset f^\infty_\alpha.$

(ii) For $1<q\leq 2$ we have that $(y_n)$ is a Schauder basis in $F^q_\alpha$ (see \cite[Proposition 1.7]{GW}) and we can put
$$ \mathscr{H}_\mu (f) = \sum_{n=0}^\infty x_n^\ast(f) y_n,$$ where the series converges in $F^q_\alpha.$ Moreover, from (\ref{normaxn}) we have that $(x^*_n)\in \ell_p((F^q_\alpha)^\ast).$ Therefore, in order to see that $\mathscr{H}_\mu$ is $p$-nuclear it suffices to check that $(y_n)\in \ell^{weak}_{p'}(F^{q}_\alpha)$.
From (\ref{debil2infty}) in Lemma \ref{newlema} $(y_n)\in \ell^{weak}_{q'}(F^q_\alpha)$. Since $\ell^{weak}_{q'}(F^q_\alpha)\subset \ell^{weak}_{p'}(F^q_\alpha)$ because $p\le q$, the proof of (ii) is complete.

(iii) We write $\mathscr{H}_\mu =\sum_{n=0}^\infty (\mu_n \|u_n\|_{q',\alpha}\frac {\alpha^n}{n!}u_n)\otimes (\frac{u_n} {\|u_n\|_{q',\alpha}}).$ From (\ref{debil12}) Lemma \ref{newlema}, since $q\ge 2$ we have that  $(\frac{u_n} {\|u_n\|_{q',\alpha}})\in \ell^{weak}_{q'}(F^q_\alpha)\subset \ell^{weak}_{p'}(F^q_\alpha)$ using that $p\le q$. Now, using  $$\|u_n\|_{q',\alpha}^2\frac{\alpha^n}{n!}\asymp (n+1)^{\frac{1}{2}-\frac{1}{q}} $$ we obtain $\Big(\mu_n \|u_n\|_{q',\alpha}\frac {\alpha^n}{n!}u_n\Big)\in \ell^p(F^{q'}_\alpha)$ under the assumption
$ \sum_{n=0}^\infty \mu_n^p(n+1)^{p(\frac{1}{2}-\frac{1}{q})}<\infty$ and the result is shown.
\end{proof}

\begin{proposition}\label{prop:9.3} Let $1 \leq p \leq 2.$

(i) If
$
\sum_{n=0}^\infty \mu_n^{p} (n+1)^{\frac{2-p}{2}} <\infty$ then $\mathscr{H}_\mu:F^{p'}_\alpha\to F^{p'}_\alpha$ is $p$-summing.

(ii) If  $\mathscr{H}_\mu:F^{p'}_\alpha\to F^{p'}_\alpha$ is $p$-summing then
$
\sum_{n=0}^\infty \mu_n^{p} (n+1)^{\frac{p-2}{2}} <\infty.$

(iii) If  $\mathscr{H}_\mu:F^{p}_\alpha\to F^{p}_\alpha$ is $p'$-summing then $(\mu_n)\in \ell_{p'}.$

\end{proposition}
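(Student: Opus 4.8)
The unifying observation is that $\mathscr{H}_\mu$ acts diagonally on the monomials, $\mathscr{H}_\mu u_n=\mu_n u_n$, so that in each part it is enough to feed a suitably normalized sequence of monomials into the defining inequality of the relevant summing class and then read off the summability of $(\mu_n)$ against the correct power of $n+1$ by means of the norm asymptotics (\ref{normaun}) and the weak $\ell_p$-membership statements (\ref{debil1}), (\ref{debil12}), (\ref{debil2infty}) of Lemma \ref{newlema}. Throughout $1\le p\le 2$, hence $p'\ge 2$.

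\textbf{Part (i).} The plan is to recognize the hypothesis as exactly that of Proposition \ref{pro1}(iii) with $q=p'$. This choice is legitimate because $\max\{p,2\}=2\le p'$, and the elementary identity $p(1/2-1/p')=p(1/p-1/2)=(2-p)/2$ shows that the series $\sum_n\mu_n^p(n+1)^{p(1/2-1/q)}$ for $q=p'$ is precisely $\sum_n\mu_n^p(n+1)^{(2-p)/2}$, assumed finite. Proposition \ref{pro1}(iii) then yields $\mathscr{H}_\mu\in N_p(F^{p'}_\alpha,F^{p'}_\alpha)$, and since $p$-nuclear operators are $p$-summing we obtain the claim for $1<p\le 2$. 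When $p=1$ the hypothesis $\sum_n\mu_n(n+1)^{1/2}<\infty$ forces $\sum_n\mu_n<\infty$, so Proposition \ref{pro1}(i) gives that $\mathscr{H}_\mu:F^\infty_\alpha\to F^\infty_\alpha$ is nuclear, in particular absolutely summing.

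\textbf{Part (ii).} Assume $\mathscr{H}_\mu:F^{p'}_\alpha\to F^{p'}_\alpha$ is $p$-summing; being bounded it is defined on each $u_n$, so $\mu_n<\infty$. I would apply the $p$-summing inequality to the normalized monomials $x_n=u_n/\|u_n\|_{p,\alpha}$, which lie in $\ell^{weak}_p(F^{p'}_\alpha)$ by (\ref{debil12}) of Lemma \ref{newlema} read with its free index equal to $p'$ (legitimate since $2\le p'<\infty$, and then $(p')'=p$). Consequently $(\mu_n u_n/\|u_n\|_{p,\alpha})=(\mathscr{H}_\mu x_n)\in\ell_p(F^{p'}_\alpha)$, that is,
$$\sum_{n=0}^\infty \mu_n^p\left(\frac{\|u_n\|_{p',\alpha}}{\|u_n\|_{p,\alpha}}\right)^p<\infty.$$
By (\ref{normaun}) we have $\|u_n\|_{p',\alpha}/\|u_n\|_{p,\alpha}\asymp(n+1)^{(p-2)/(2p)}$, so its $p$-th power is $\asymp(n+1)^{(p-2)/2}$, which gives $\sum_n\mu_n^p(n+1)^{(p-2)/2}<\infty$. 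The case $p=1$ runs identically with (\ref{debil1}) in place of (\ref{debil12}).

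\textbf{Part (iii).} Here $1<p\le 2$, so $p'<\infty$ and $p'$-summing makes sense. By (\ref{debil2infty}) of Lemma \ref{newlema} the sequence $x_n=u_n/\|u_n\|_{p,\alpha}$ belongs to $\ell^{weak}_{p'}(F^{p}_\alpha)$, so if $\mathscr{H}_\mu:F^{p}_\alpha\to F^{p}_\alpha$ is $p'$-summing then $(\mathscr{H}_\mu x_n)=(\mu_n x_n)\in\ell_{p'}(F^{p}_\alpha)$; since $\|x_n\|_{p,\alpha}=1$ this says exactly $\sum_n\mu_n^{p'}<\infty$. No step presents a genuine obstacle: the only delicate point is the exponent bookkeeping, namely checking in (i) that $p(1/2-1/p')=(2-p)/2$ and choosing the normalization in (ii) so that the powers of $n$ coming from $\|u_n\|_{p,\alpha}$ and $\|u_n\|_{p',\alpha}$ in (\ref{normaun}) combine to precisely $(n+1)^{(p-2)/2}$.
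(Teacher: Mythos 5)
Your proposal is correct and follows essentially the same route as the paper: part (i) is exactly the paper's reduction to Proposition \ref{pro1}(iii) with $q=p'$ (plus $p$-nuclear $\Rightarrow$ $p$-summing), and parts (ii) and (iii) carry out directly the normalized-monomial test via Lemma \ref{newlema} and (\ref{normaun}) that the paper packages as Proposition \ref{proppq}, with all exponent computations checking out. The only (harmless) omission is the degenerate case $p=1$ in (iii), which the statement includes and the paper disposes of by noting that $\infty$-summing means bounded, whence $(\mu_n)\in\ell_\infty$ follows from testing on the normalized monomials.
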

\begin{proof}
(i) follows from (iii) in Proposition \ref{pro1} for $q=p'$, using that $p$-nuclear implies $p$-summing (\cite{Cohen}).

(ii) follows from Proposition \ref{proppq}.

(iii) The case $p=1$ means that $\mathscr{H}_\mu$ bounded on $F^{1}_\alpha$ implies $(\mu_n)$ is bounded. The case $1<p\le 2$ follows from the main result in \cite{jkmr} (see also \cite[p.249]{woj}) or using Proposition \ref{proppq}.
\end{proof}

\end{document}